\pgfplotsset{width=7cm,compat=1.8}
\newtheorem{proposition}{Proposition}[section]
\newtheorem{theorem}[proposition]{Theorem}
\newtheorem{lemma}[proposition]{Lemma}
\newtheorem{example}[proposition]{Example}
\newtheorem*{lemma*}{Lemma}
\newtheorem*{theorem*}{Theorem}
\theoremstyle{definition}
\newtheorem{definition}[proposition]{Definition}
\newtheorem{remark}{Remark}
\newcommand{\T}{\mathcal{M}}
\newcommand{\e}{\mathcal{E}}
\newcommand{\tT}{\mathcal{T}}
\newcommand{\eE}{\widehat{\mathcal{E}}}
\newcommand{\pid}{G}
\newcommand{\p}{\mathcal{P}}
\newcommand{\bn}{\textbf{n}}
\newcommand{\bt}{\boldsymbol{\tau}}
\newcommand{\nc}{\mathrm{nc}}
\newcommand{\mn}{{\cal{M}}_{\bn\bn}}
\newcommand{\mt}{{\cal{M}}_{\bn\bt}}
\newcommand{\h}{\mathrm{HCT}}
\newcommand{\pw}{\mathrm{pw}}
\newcommand{\PF}{\mathrm{PF}}
\newcommand{\M}{\mathrm{M}}
\newcommand{\ms}{h_{\mathrm{max}}}
\newcommand{\vertiii}[1]{{\left\vert\kern-0.25ex\left\vert\kern-0.25ex\left\vert #1 
		\right\vert\kern-0.25ex\right\vert\kern-0.25ex\right\vert}}
\def\Xint#1{\mathchoice
	{\XXint\displaystyle\textstyle{#1}}%
	{\XXint\textstyle\scriptstyle{#1}}%
	{\XXint\scriptstyle\scriptscriptstyle{#1}}%
	{\XXint\scriptscriptstyle\scriptscriptstyle{#1}}%
	\!\int}
\def\XXint#1#2#3{{\setbox0=\hbox{$#1{#2#3}{\int}$ }
		\vcenter{\hbox{$#2#3$ }}\kern-.6\wd0}}
\def\dashint{\Xint-}
\title{Nonconforming virtual elements for the biharmonic equation with  Morley degrees of freedom on polygonal meshes}
\author{Carsten Carstensen\thanks{ Department of Mathematics, Humboldt-Universit\"{a}t zu Berlin, 10099 Berlin, Germany.
		Email: cc@math.hu-berlin.de},  Rekha Khot\thanks{Department of Mathematics, Indian Institute of Technology Bombay, Powai, Mumbai, 400076. Email: rekhamp@math.iitb.ac.in, akp@math.iitb.ac.in} \; and Amiya K. Pani\footnotemark[2]}
\date{}	
\begin{document}
	\maketitle
		\begin{abstract}
			The lowest-order nonconforming virtual element  extends the Morley triangular element  to polygons for the approximation of the weak solution $u\in V:=H^2_0(\Omega)$ to the biharmonic equation. The abstract framework allows (even a mixture of) two examples of the local discrete spaces $V_h(P)$ and a smoother allows rough source terms $F\in V^*=H^{-2}(\Omega)$.   The \textit{a priori} and \textit{a posteriori} error analysis in this paper circumvents any trace of second derivatives by some computable conforming companion operator $J:V_h\to V$  from the nonconforming virtual element space $V_h$. The operator $J$ is a right-inverse of the interpolation operator  and  leads to  optimal error estimates in  piecewise Sobolev norms without any additional regularity assumptions on $u\in V$.  As a smoother the companion operator modifies the discrete right-hand side  and then allows a quasi-best approximation. An explicit residual-based \textit{a posteriori} error estimator is reliable and efficient up to data oscillations. Numerical examples   display the predicted empirical convergence rates  for uniform and optimal convergence rates for adaptive mesh-refinement.
\end{abstract}
	\noindent
	\textbf{Keywords}: biharmonic equation,  virtual elements,  nonconforming, polytopes,  enrichment, \par  \textit{a priori}, \textit{a posteriori}, adaptive  mesh-refinement, companion  operator, smoother
	\\
	\\
	\textbf{AMS subject classifications}: 65N12, 65N15, 65N30, 65N50.
	\numberwithin{equation}{section}
	\numberwithin{figure}{section}
	\section{Introduction}
	The popular nonconforming Morley finite element method (FEM) for fourth-order problems allows a generalization from triangular domains to polygons  in the class of nonconforming virtual element methods (ncVEM).  Two  ncVEM have been introduced in \cite{antonietti2018fully,chen2020nonconforming,zhao2018morley} for $H^3$ regular  solutions, while a medius analysis  in \cite{huang2021medius}  allows minimal regularity.   %
	In comparison to the existing literature on ncVEM for biharmonic problems, this paper presents an abstract framework and identifies two hypotheses \ref{H1}-\ref{H2} for a unified stability and \textit{a priori} error analysis of at least  two different ncVEM  each with an individual  parameter $r=-1,0,1,2$ ($r=-1$ for original VE spaces and $r=0,1,2$ for enhanced VE spaces \cite{ahmad2013equivalent}) and even a mixture of those.    This paper adds a new analysis with a computable conforming companion that allows a quasi-best approximation
	\begin{align}|u-Gu_h|_{2,\pw}\lesssim \min_{v_h\in V_h}|u-Gv_h|_{2,\pw} \label{quasi}
	\end{align} with the local Galerkin projection $G$ onto piecewise quadratics and a general source function with a smoother for the first time in ncVEM and completes the \textit{a priori} error convergence analysis  in piecewise energy and weaker Sobolev norms.   The lower-order estimates are available in the  literature for enhanced VE spaces, e.g., Zhao \textit{et al.} discuss piecewise $H^1$ error estimate in \cite{zhao2018morley} for an enhanced VE space ($r=0$) and this paper proves it also for original VE spaces ($r=-1$).
	The design of companion operators started in \cite{carstensen2015adaptive} for second-order  and  in \cite{carstensen2014guaranteed,carstensen2018prove,veeser2019quasi} for fourth-order problems. It is related to enrichments in  multigrid methods \cite{brenner1993nonconforming} and to reliable  \textit{a posteriori} error control \cite{carstensen2013computational}. Its role as a smoother in ncVEM generalizes \cite{carstensen2021,carstensen2021lower,veeser2019quasi} for the Morley FEM.
	 The first paper \cite{carstensen2021priori}  on an \textit{a posteriori} error analysis for  ncVEM  is restricted to second-order  problems and includes many references on an \textit{a posteriori} error analysis for the conforming VEM. This is the first paper on an \textit{a posteriori} virtual element error control for fourth-order problems  with reliable and efficient error estimators and a suggested adaptive mesh-refining algorithm. The presented \textit{a posteriori} error analysis also covers conforming VEM \cite{brezzi2013virtual}.

	\bigskip
	\noindent\textit{Main results}. This paper contributes to the understanding of the ncVEM for a class of examples that includes the two known  examples of discrete VE spaces for  fourth-order problems
	\begin{itemize}
		\item  a computable conforming companion operator, 
		\item   \textit{a priori} error estimates in piecewise $H^1$ and $H^2$ norms,
		\item quasi-best approximation for a smoother for any source term $F\in H^{-2}(\Omega)$,
	    \item reliable and (up to data oscillations)  efficient  \textit{a posteriori} error control, 
	    \item adaptive  mesh-refinement algorithm with improved empirical convergence rates. 
	\end{itemize}
	The results are displayed for 2D and the lowest-order case only  corresponding to the Morley degrees of freedom, but the arguments allow a higher dimension and higher degrees.
	
	\medskip
	\noindent\textit{Outline and organisation of the paper}. Section 2 describes the admissible partitions of the domain $\Omega$ into polygonal domains  and defines the local and global Morley degrees of freedom. Subsection 2.2 establishes a Poincar\'e-Friedrichs inequality on polygons and Subsection~2.4 recall the local Galerkin projection $G$ and establish associated error estimates. Section~3 explains an abstract framework with hypotheses \ref{H1}-\ref{H2} and  presents two affirmative examples of virtual elements $V_h(P)$. The interpolation operator is defined in Subsection~3.3 and  its error estimates follow in Subsection~3.4. Section~4 designs a computable conforming companion operator, which is a right-inverse of the interpolation operator, and provides the fundamental approximation error estimates. Subsection~5.1 introduces the natural stabilization and Subsection~5.2  the discrete problem with two choices  for the right-hand side.  Subsection~5.3 provides  the \textit{a priori} error estimates in piecewise $H^1$ and $H^2$ norms  in the best-approximation form up to data oscillations; a smoother in the right-hand side eliminates the  oscillations.  Section~6 developes an explicit residual-based reliable and (up to data oscillations) efficient  error analysis for ncVEM that also applies for the conforming VEM. 	 The stabilization term is efficient with respect to the sum of the error $u-\pid u_h$ and  $u-u_h$ in their piecewise $H^2$ seminorms. Subsection~7.1 suggests an adaptive mesh-refinement algorithm. Numerical results support the theoretical predictions in Subsections~7.2-7.3 and provide striking numerical evidence of  optimal empirical convergence rates for adaptive mesh-refining. Supplement material accompanies this paper with details on the local virtual element spaces  and is referred to as Appendix A, B, and C throughout this paper. This contains partly established or routine results that are somehow standard but seemingly not available in the literature in this form.
	
	\medskip
	\noindent\textit{Notation}. Standard notation on Lebesgue and Sobolev spaces  and norms applies throughout this paper, e.g.,  $\|
	\cdot\|_{s,\cal{D}}$ (resp. seminorm $|\cdot|_{s,\cal{D}}$) for $s\geq0$ denotes norm on the Sobolev space  $H^s(\mathcal{D}):=H^s(\text{int}(\cal{D}))$  of order $s\in\mathbb{R}$ defined in the interior $\text{int}(\mathcal{D})$ of a  domain $\mathcal{D}$, while $(\cdot,\cdot)_{L^2({\cal D})}$ and $\|\cdot\|_{L^2({\cal D})}$  denote the $L^2$ scalar product and $L^2$ norm in  ${\cal D}$.  Let $|\mathcal{D}|$ denote the area of a domain $\mathcal{D}$, and $\dashint_{\mathcal{D}}\bullet\,dx:=|\mathcal{D}|^{-1}\int_{\mathcal{D}}\bullet\,dx$  denote the integral mean  on $\mathcal{D}$.  Define the Sobolev space $V=H^2_0(\Omega):=\{v\in H^2(\Omega):v=v_\bn=0\}$ for the  derivative $v_\bn=\nabla v\cdot\bn$ in the direction of outward unit normal $\bn$ along the boundary $\partial\mathcal{D}$. The vector space $C^r(\mathcal{D})$ is the set of $C^r$-continuous functions  defined on a domain $\mathcal{D}$ for $r\in\mathbb{N}_0$.  Let $\p_k({\cal D})$ denote  the set of polynomials of degree at most $k\in\mathbb{N}_0$ defined on a domain ${\cal D }$ and  $\p_k({\T})$ denote the set of piecewise polynomials on an admissible partition $\T\in\mathbb{M}$ (defined in Subsection~2.1). The piecewise seminorm and norm  in $H^s(\T)$ for $s\in\mathbb{R}$ (see the definition of $|\cdot|_{s,P}=|\cdot|_{H^s(P)}$ in, e.g.,  \cite[Chapter~14]{7}) read
	$|\cdot|_{s,\text{pw}}:=\big(\sum_{P\in\T}| \cdot|_{s,P}^2\big)^{1/2}$ and $\|\cdot\|_{s,\text{pw}}:=\big(\sum_{P\in\T}\| \cdot\|_{s,P}^2\big)^{1/2}$. Let $\Pi_k$  denote the $L^2$ projection on $\p_k(\T)$ for $k\in\mathbb{N}_0$. The oscillation of $f\in L^2(\Omega)$   reads  
	\begin{align*}
	\mathrm{osc}_2(f,{\T}):=\Big(\sum_{P\in{\T}}\mathrm{osc}_2^2(f,P)\Big)^{1/2}\quad\text{for}\quad\mathrm{osc}^2_2(f,P):= \|h_P^2(1-\Pi_2)f\|_{L^2(P)}^2.
	\end{align*} 
\par	Let $\mathbb{S}$ be the set of $2\times 2$ symmetric matrices in $\mathbb{R}^{2\times 2}$ and let $\delta_{jk}$ denote the Kronecker delta ($\delta_{jk}=0$ if $j\neq k$ and $\delta_{jj}=1$). Let $\alpha:=(\alpha_1,\alpha_2)$ denote a multi-index with $\alpha_j\in \mathbb{N}_0$ for $j=1,2$ and $|\alpha|:=\alpha_1+\alpha_2.$
	The  outward normal and tangential   derivatives of first and higher orders  are written as subscripts $\bn,\bt, {\bn\bn} , {\bt\bt}, {\bn\bt\bt}$ etc. for the exterior  unit normal vector $\bn$ and the tangential vector $\bt$ along the boundary $\partial P$ of the (polygonal Lipschitz) domain $P\in\T\in\mathbb{M}$ (from Subsection~ 2.1).  An inequality $A\lesssim B$ abbreviates $A\leq CB$ for   a generic constant $C$,  that exclusively depends on the domain $\Omega$ and on the mesh-parameter $\rho$ (from (M2) below). 
	\section{Virtual element method}
	\subsection{Admissible partitions}
	Let $\mathbb{M}$ be a family of decompositions of $\overline{\Omega}$ into polygonal domains satisfying the two mesh conditions (M1)-(M2) with a universal positive constant $\rho$.

\medskip
		\noindent(M1) \textit{Admissibility}.  Any two distinct polygonal domains $P$ and $P'$ in $\T\in\mathbb{M}$ are disjoint or share   a  finite number of edges  and vertices.
		
		\medskip
		\noindent(M2) \textit{Mesh regularity}.   Every polygonal domain $P$ of diameter $h_P$ is star-shaped with respect to every point of a ball of radius greater than equal to $\rho h_P$ and every edge $E$ of $P$ has a length $h_E$ greater than equal to $\rho h_P$.\\
\par Here and throughout this paper,  $h_\T|_P:=h_P:=\text{diam}(P)$ denotes the piecewise constant mesh-size $h_\T\in\p_0(\T)$  and $\ms:=\max_{P\in\T}h_P$ denotes the maximum diameter over all $P\in\T\in\mathbb{M}$. Let $\mathcal{V}(P)$ (resp. $\mathcal{V}$) denote the set of vertices  of $P$ (resp. of $\T$) and let $\e(P)$ (resp. $\e$) denote the set of edges  of $P$ (resp. of $\T$). Denote the interior and boundary edges of $\T$ by $\e(\Omega)$ and $\e(\partial\Omega)$.  Let $|\mathcal{V}|$ (resp. $|\e|$) denote the number of vertices (resp. edges) and $N:=|\mathcal{V}|+|\e|$. 

\medskip
\noindent The \textit{standard notation  of the polygonal domain $P$} with $N_P$ edges and $N_P$ vertices is depicted in Figure~2.1.a. Note that $3\leq N_P\leq M(\rho)$ for a global number $M(\rho)$ that exclusively depends on $\rho$ \cite{da2014mimetic}. We enumerate the vertices $\mathcal{V}(P):=\{z_1,\dots,z_{N_P}\}$ and edges $\e(P):=\{E(1),\dots,E(N_P)\}$ consecutively, i.e., $E(j)=\text{conv}\{z_j,z_{j+1}\}$ for $j=1,\dots,N_P$ with $z_{N_P+1}:=z_1$ and enumerate $z_1,\dots,z_{N_P}$ counterclockwise along the boundary $\partial P$.  (M2) implies that  each polygonal domain $P\in\T$  can be divided into triangles $T(j):=\text{conv}\{z_0,z_j,z_{j+1}\}$  for all $j=1,\dots,N_P$ and for the midpoint $z_0$ of the ball from (M2) in Figure~2.1.b. It is known \cite{14} that the resulting sub-triangulation $\tT|_P:=\tT(P):=\cup_{j=1}^{N_P}T(j)$ of $P\in\T$ is uniformly shape-regular; i.e, the minimum angle in each triangle $T\in\tT(P), P\in\T\in\mathbb{M}$, is bounded below by some positive constant $w_0>0$ that exclusively depends on $\rho$. Let  $\widehat{\mathcal{V}}$ (resp. $\widehat{\mathcal{V}}(P)$) denote the set of vertices and   $\eE$ (resp. $\eE(P)$) denote the set of edges  in  $\tT$ (resp. $\tT(P)$). 

	\begin{figure}[H]
	\centering
	\begin{subfigure}{.33\textwidth}
		\centering
		\includegraphics[width=1\linewidth]{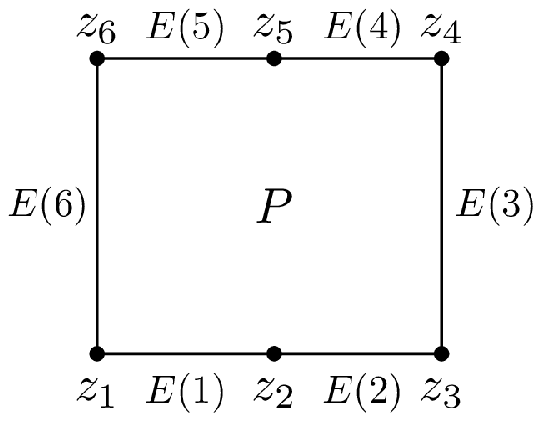}               
		\caption{}
		\vspace{0.5cm}
		\label{fig1}
	\end{subfigure}%
	\begin{subfigure}{.33\textwidth}
		\centering
		\includegraphics[width=1\linewidth]{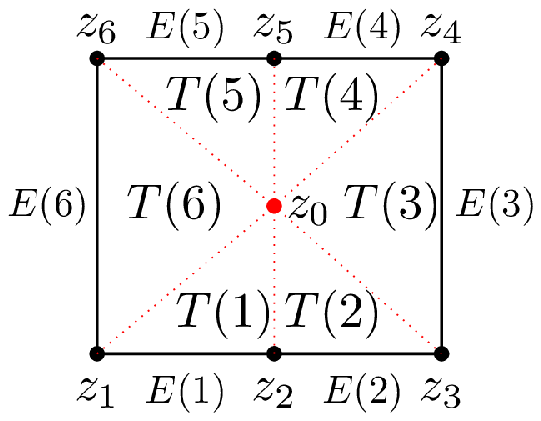}
		\caption{}
		\vspace{0.5cm}
		\label{fig2}
	\end{subfigure}
\begin{subfigure}{.33\textwidth}
	\centering
	\includegraphics[width=1\linewidth]{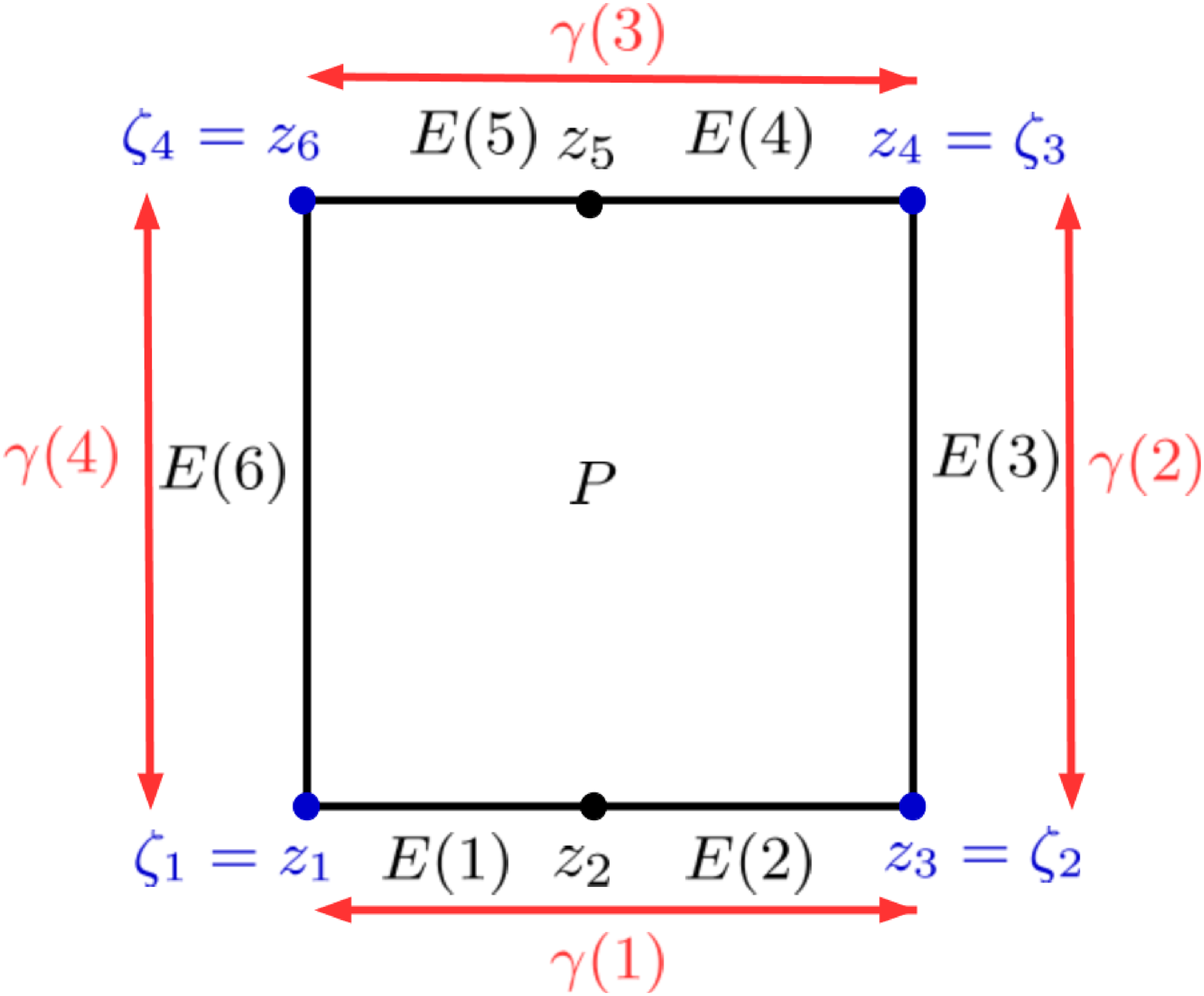}
	\caption{}
	\vspace{0.5cm}
	\label{fig3}
\end{subfigure}
	\caption{(a) Hexagon $P$ with vertices $z_1,\dots,z_6$ and edges $E(1), \dots, E(6)$  (b) its sub-triangulation $\tT(P)$ and (c)  $P$ with corner points $\zeta_1,\dots,\zeta_4$ and sides $\gamma(1),\dots,\gamma(4)$.}
	\label{fig}
\end{figure}
With a counterclockwise orientation along the polygonal boundary $\partial P$, assign the piecewise constant tangential unit vector $\bt_P$ and the outer normal unit vector $\bn_P$.  Define the local degrees of freedom (dofs) $\text{dof}_1(v),\dots,\text{dof}_{2N_P}(v)$,  for $v\in  H^2(P)$, by
\begin{align}
\text{dof}_j(v) = \begin{cases} \text{$v(z_j)\hspace{1.85cm}$ for $j=1,\dots,N_P,$}\\
\text{$\int_{E(k)} v_\bn\,ds\hspace{1cm}$  for  $j=N_P+1,\dots,2N_P$ and $k=j-N_P$}
\end{cases}\label{dof}
\end{align}
with the vertices $z_j$  and the edges $E(k)\in\e(P)$ of the polygonal domain $P\in\T$ in Figure~2.1.a. Given a polygonal domain $P\in\T$, the dofs in \eqref{dof} are collected in the linear map $\text{Dof}:=(\text{dof}_1,\dots,\text{dof}_{2N_P}):H^2(P)\to\mathbb{R}^{2N_P}$.

\medskip
  Fix the orientation of a unit normal vector $\bn_E$  to  each edge $E\in\e$. The sign of the jump $[\bullet]_E$ across an edge $E\in \e$  follows from the subsequent convention of the orientation of a unit normal vector $\bn_E$ along an edge $E$: Label the two neighbouring polygons $P_{\pm}$ sharing the interior edge $E=\partial P_{+}\cap\partial P_{-}$ such that  $\bn_{P_+}|_E=\bn_E$  and $\bn_{P_-}|_E=-\bn_E$. This defines the sign in the jump $[\bullet]_E:=\bullet|_{P_{+}}-\bullet|_{P_{-}}$ across $E\in\e(\Omega)$.  For  a boundary edge $E\in\e(\partial\Omega)$, set  $\bn_{\Omega}|_E= \bn_E$ and $[\bullet]_E:=\bullet|_E$. For $
 v_\nc\in V_\nc:=\{v\in H^2(\T):v\; \text{is continuous at interior vertices  and zero at boundary}$ $\text{vertices of $\T$, and} \int_E[v_{\bn}]_E\,ds=0\quad \text{for all}\;E\in\e\}$, the global Morley dofs read
\begin{align}
\text{dof}_j(v_\nc) = \begin{cases} \text{$v_\nc(z_j)\hspace{1.85cm}$ for $j=1,\dots,|\mathcal{V}|,$}\\
\text{$\int_{E(k)} (v_\nc)_\bn\,ds\hspace{0.75cm}$  for  $j=|\mathcal{V}|+1,\dots,N$ and $k=j-|\mathcal{V}|.$}
\end{cases}\label{gdof}
\end{align}
The global degrees of freedom $\text{dof}_j(v_\nc)$ from \eqref{gdof} coincide with the local degrees of freedom $\text{Dof}(v_\nc|_P)$ from \eqref{dof} for each polygonal domain $P\in\T$ up to a (known) change of signs of $\bn_E$ and $\bn_P|_E$ for an interior edge $E\in\e(\Omega)$.
\begin{lemma}\label{eq}
	$|\cdot|_{2,\pw}$  defines a norm  on $V_\nc$ equivalent to $\|\cdot\|_{2,\pw}$. 
\end{lemma}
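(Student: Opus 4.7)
The inequality $|v_h|_{2,\pw}\le\|v_h\|_{2,\pw}$ is immediate from the definitions, so the task reduces to proving the converse bound $\|v_h\|_{2,\pw}\lesssim|v_h|_{2,\pw}$ for every $v_h\in V_\nc$; this simultaneously yields equivalence and positive definiteness, and hence confirms that $|\cdot|_{2,\pw}$ is indeed a norm on $V_\nc$.

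For the positive-definiteness half, the plan is a direct rigidity argument. If $|v_h|_{2,\pw}=0$ then $D^2 v_h=0$ on every $P\in\T$, so $p_P:=v_h|_P\in\p_1(P)$. On an interior edge $E=\partial P_+\cap\partial P_-$ with endpoints $z_1,z_2$, the vertex condition defining $V_\nc$ (continuity at interior vertices, vanishing at boundary vertices) forces $p_{P_+}(z_i)=p_{P_-}(z_i)$ for $i=1,2$, so $p_{P_+}$ and $p_{P_-}$ already coincide on the whole line through $E$. Since the gradient of an affine function is constant, the single scalar condition $\int_E[v_{\bn}]_E\,ds=0$ then forces the two normal derivatives to agree, and the two affine pieces coincide. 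Connectivity of $\T$ extends this to a single affine function on $\overline{\Omega}$, and the vanishing at three non-collinear boundary vertices (which exist because $\partial\Omega$ is a closed polygonal Jordan curve) makes that function identically zero.

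For the quantitative equivalence I would invoke the polygonal Poincar\'e--Friedrichs inequality to be established in Subsection~2.2, which on each $P\in\T$ estimates $\|w\|_{0,P}$ and $|w|_{1,P}$ by $|w|_{2,P}$ up to a lower-order remainder involving vertex values and edge-mean normal derivatives, i.e.\ precisely the affine data that $|\cdot|_{2,P}$ cannot see. Summing over $P\in\T$ produces residual boundary terms whose telescoping is exactly governed by the three defining conditions of $V_\nc$: continuity at interior vertices, vanishing at boundary vertices, and $\int_E[v_{\bn}]_E\,ds=0$ on every $E\in\e$. The step I expect to be the main obstacle is the absorption of these residual boundary functionals into $|v_h|_{2,\pw}$: one has to check that the polygon-wise Poincar\'e constant is uniformly bounded over $\T\in\mathbb{M}$ (which follows from (M2) and the uniform shape-regularity of the sub-triangulation $\tT$), and that the sign and orientation conventions set up between \eqref{dof} and \eqref{gdof} indeed make the telescoping work without leftover terms. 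A discrete trace inequality then finishes the absorption and produces $\|v_h\|_{0,\Omega}+|v_h|_{1,\pw}\lesssim|v_h|_{2,\pw}$, completing the equivalence.
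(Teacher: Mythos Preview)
Your rigidity argument for positive definiteness is correct and complete. The gap is in the quantitative equivalence.

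The local inequality of Theorem~\ref{PF} does not produce ``residual boundary terms that telescope''. On each $P\in\T$ it yields $\|v_h\|_{L^2(P)}+h_P|v_h|_{1,P}\lesssim h_P^2|v_h|_{2,P}$ only after subtracting an affine function $p_P\in\p_1(P)$ chosen so that $v_h-p_P$ meets the hypotheses of Theorem~\ref{PF}.b. Summing over $P$ leaves $\|v_h\|_{L^2(\Omega)}\lesssim h_{\max}^2|v_h|_{2,\pw}+\|\sum_{P}p_P\chi_P\|_{L^2(\Omega)}$, and the piecewise-affine remainder is a global object that does not cancel across interior edges: the affine pieces $p_P$ are determined by the shared degrees of freedom, but they enter with polygon-dependent geometric weights and there is nothing to telescope. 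Bounding that remainder by $|v_h|_{2,\pw}$ uniformly in $h$ is precisely the content of a \emph{global} Poincar\'e--Friedrichs inequality for piecewise $H^2$ functions and requires a genuinely global argument (a chain of trace comparisons through the connected mesh, terminating at the boundary conditions on $\partial\Omega$). A single discrete trace inequality at the end cannot replace this.

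The paper does not attempt that reconstruction: its proof simply cites \cite[Lemma~3.1]{antonietti2018fully} and \cite[Lemma~5.1]{zhao2018morley}, which in turn invoke Brenner's global piecewise-$H^2$ Poincar\'e--Friedrichs inequality \cite{brennerh2} after checking that membership in $V_\nc$ controls exactly the jump functionals appearing on its right-hand side. Your route would have to rederive that global result, and the telescoping mechanism you describe is not how it goes.
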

\begin{proof}
	This is shown in \cite[Lemma~3.1]{antonietti2018fully} and in \cite[Lemma~5.1]{zhao2018morley}   based on the Poincar\'e-Friedrichs inequality for piecewise $H^2$ functions in \cite{brennerh2}.
\end{proof}
\subsection{Poincar\'e-Friedrichs inequality}
This subsection provides a Poincar\'e-Friedrichs inequality for a polygonal domain $P\in\T\in\mathbb{M}$ with  explicit constants that exclusively depend on $\rho$ from Subsection~2.1.
\begin{theorem}[Poincar\'e-Friedrichs inequality]\label{PF} 
	There exists a positive constant $C_\PF$ (that  exclusively depends on $\rho$)  such that\\
	$(a)$ $
	\|f\|_{L^2(P)}\leq C_\PF h_P|f|_{1,P}$
	holds for any $f\in H^1(P)$ with $0\in\mathrm{conv} \Big\{\int_{E(1)}f\,ds,\dots,\int_{E(N_P)} f\,ds\Big\},$\\
	$(b)$
	$
	\sum_{m=0}^1h_P^{m-2}|f|_{m,P}\leq C_\PF |f|_{2,P}
	$
	holds for any $f\in H^2(P)$ with $0\in \mathrm{conv}\{f(z_1),\dots,f(z_{N_P})\}$ and $ 0\in\mathrm{conv}\left\{\int_{E(1)} \frac{\partial f}{\partial x_j}\,ds,\dots,\int_{E(N_P)} \frac{\partial f}{\partial x_j}\,ds\right\}$ for $j=1$ and $j=2$.
\end{theorem}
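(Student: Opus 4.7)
The plan is to reduce both parts to standard Poincar\'e, trace, and Sobolev inequalities on the sub-triangulation $\tT(P)$ from Subsection~2.1. Since every $T\in\tT(P)$ has minimum angle at least $w_0(\rho)>0$ and $N_P\leq M(\rho)$, all these ingredients transfer to $P$ with constants depending only on $\rho$. Throughout, I will use $\bar f := \dashint_P f\,dx$ and the mean-zero Poincar\'e inequality $\|f-\bar f\|_{L^2(P)}\leq C(\rho)\,h_P|f|_{1,P}$ on the star-shaped domain $P$ as a black box.

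For $(a)$, the convex-hull hypothesis furnishes weights $\lambda_j\geq 0$ with $\sum_j\lambda_j=1$ and $\sum_j\lambda_j\int_{E(j)}f\,ds=0$, hence
\[
\bar f\,\sum_{j=1}^{N_P}\lambda_j|E(j)|=-\sum_{j=1}^{N_P}\lambda_j\int_{E(j)}(f-\bar f)\,ds.
\]
The scaled trace inequality on the sub-triangle $T(j)$ adjacent to $E(j)$ gives $\|f-\bar f\|_{L^2(E(j))}\lesssim h_P^{-1/2}\|f-\bar f\|_{L^2(P)}+h_P^{1/2}|f|_{1,P}\lesssim h_P^{1/2}|f|_{1,P}$, and Cauchy--Schwarz then bounds each $|\int_{E(j)}(f-\bar f)\,ds|\lesssim h_P|f|_{1,P}$. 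Since $\sum_j\lambda_j|E(j)|\geq\rho h_P$ by (M2), this yields $|\bar f|\lesssim |f|_{1,P}$, so $|P|^{1/2}|\bar f|\lesssim h_P|f|_{1,P}$, and a triangle inequality finishes $(a)$.

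For $(b)$, I first apply $(a)$ to each partial derivative $\partial_{x_j}f\in H^1(P)$ (for $j=1,2$): the convex-hull hypothesis of $(b)$ is exactly what is needed, and summing gives $|f|_{1,P}\leq C_\PF\, h_P|f|_{2,P}$. For the $L^2$-bound, I split $f=(f-\bar f)+\bar f$ and invoke the scaled Sobolev embedding $H^2(P)\hookrightarrow C^0(\overline P)$ in the form
\[
\|f-\bar f\|_{L^\infty(P)}\lesssim h_P^{-1}\|f-\bar f\|_{L^2(P)}+|f|_{1,P}+h_P|f|_{2,P}\lesssim h_P|f|_{2,P},
\]
where the last inequality uses the mean Poincar\'e bound and the previous step. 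The vertex weights $\mu_j\geq 0$ with $\sum_j\mu_j=1$ and $\sum_j\mu_j f(z_j)=0$ then produce $|\bar f|=|\sum_j\mu_j(\bar f-f(z_j))|\leq\|f-\bar f\|_{L^\infty(P)}\lesssim h_P|f|_{2,P}$. Combining with $|P|\lesssim h_P^2$ and the mean Poincar\'e estimate concludes $\|f\|_{L^2(P)}\lesssim h_P^2|f|_{2,P}$.

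The one genuine technical issue is making the $\rho$-only dependence explicit in the three ingredients on $P$ (mean Poincar\'e, trace, and the Sobolev embedding $H^2\hookrightarrow L^\infty$); this is standard once everything is pulled back to the shape-regular sub-triangles and summed over the at most $M(\rho)$ of them, but the bookkeeping must be carried out carefully to avoid constants that silently depend on $N_P$.
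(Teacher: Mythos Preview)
Your argument is correct. For part $(a)$ the paper merely cites an earlier reference, and your proof via the mean-zero Poincar\'e inequality on the star-shaped domain plus a trace-controlled bound on $\bar f$ is a clean realization of the same idea.

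For part $(b)$ your route differs genuinely from the paper's. The paper constructs the piecewise-linear nodal interpolant $I_1f\in S_1(\tT(P))$ on the sub-triangulation, bounds $\|f-I_1f\|_{L^2(P)}$ via Bramble--Hilbert, and then controls $\|I_1f\|_{L^2(P)}$ through a mass-matrix eigenvalue argument that reduces the problem to estimating $\sum_k f(z_k)^2$; the convex-hull hypothesis is exploited through a discrete inequality $\sum_j x_j^2\leq M\sum_j(x_{j+1}-x_j)^2$ with explicit constant $M=(2(1-\cos(\pi/N_P)))^{-1}$, and the vertex differences $|f(z_j)-f(z_{j+1})|$ are then bounded by a trace identity on each sub-triangle. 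You instead split off $\bar f$, invoke the scaled Sobolev embedding $H^2(P)\hookrightarrow L^\infty(P)$ on $f-\bar f$ (already available in the paper as the estimate used in Lemma~\ref{lem}), and read off $|\bar f|$ from the vertex convex combination in one line. Your argument is shorter and purely functional-analytic; the paper's is more constructive and tracks the constant more explicitly in terms of $N_P$ and the minimum angle $\omega_0$. Both routes share the final step of applying part $(a)$ to $\partial_{x_j}f$ to control $|f|_{1,P}$.
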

\begin{proof}
	The proof of $(a)$ is included in  \cite[Lemma~2.1]{carstensen2021priori}. The proof of $(b)$ considers the sub-triangulation $\tT(P)$ of the polygonal domain $P$ from Figure~2.1.b. Define the linear interpolation $I_1f\in S_1(\tT(P)):=\p_1(\tT(P))\cap C^0(P)$ with $(f-I_1f)(z_k)=0$ for all $k=0,\dots,N_P$. The triangle inequality shows
	\begin{align}
	\|f\|_{L^2(P)}\leq \|f-I_1f\|_{L^2(P)}+\|I_1f\|_{L^2(P)}. \label{2.1}
	\end{align}
	The Bramble Hilbert lemma \cite{14, ciarlet2002finite}   leads to  a positive constant $C_{\text{BH}}$ (that exclusively depends on the shape of the triangles $T(1),\dots,T(N_P)$ and so merely on $\rho$) in the error estimate
	\begin{align}
	\|f-I_1f\|_{L^2(P)}\leq C_{\text{BH}}h_P^2|f|_{2,P}.\label{2.2a}
	\end{align}
	(Explicit formulas for $C_{\text{BH}}$ in terms of the maximal angle in a triangle can be found in \cite{carstensen2012explicit}.) Let $\varphi_0,\dots,\varphi_{N_P}$ be the nodal basis functions of $S_1(\tT(P))$ with $\varphi_k(z_{\ell})=\delta_{k\ell}$ for $k,\ell=0,\dots,N_P$; whence $I_1f=\sum_{k=0}^{N_P}f(z_k)\varphi_{k}$. The  local mass matrix with entries $\int_T\lambda_j\lambda_k\,dx=(1+\delta_{jk})|T|/12$ for $j,k=1,2,3$ and the barycentric coordinates $\lambda_1,\lambda_2,\lambda_3$ in a triangle $T\in\tT(P)$ has the eigenvalues $|T|/12$ (twice) and $|T|/3$.   Rayleigh quotients with the local mass matrix reveal \begin{align}\|I_1f\|^2_{L^2(P)}&=\sum_{T\in\tT(P)}\|\sum_{z\in\mathcal{V}(T)}f(z)\varphi_z\|^2_{L^2(T)}\leq \sum_{T\in\tT(P)}\frac{|T|}{3}\sum_{z\in\mathcal{V}(T)}f(z)^2\nonumber\\&\leq\frac{1}{3}\sum_{j=0}^{N_P}f(z_j)^2\sum_{\substack{T\in\tT(P)\\z_j\in\mathcal{V}(T)}}|T|\leq \frac{|P|}{3}  \sum_{k=0}^{N_P}f(z_k)^2. \label{2.3a}\end{align} 
	Since $0\in\text{conv}\{f(z_1),\dots,f(z_{N_P})\}$, there exists convex coefficients $0\leq \mu_1,\dots,\mu_{N_P}\leq 1$ with $\sum_{\ell=1}^{N_P}\mu_{\ell}=1$ and $\sum_{\ell=1}^{N_P}\mu_{\ell} f(z_{\ell})=0$. This implies $\min_{j=1}^{N_P}f(z_j)\leq 0\leq \max_{j=1}^{N_P}f(z_j)$ and so $x=\{f(z_1),\dots,f(z_{N_P})\}$ in \cite[Lemma~4.2]{8} guarantees
	\begin{align}
	\sum_{k=0}^{N_P}f(z_k)^2\leq  f(z_0)^2+M\sum_{j=1}^{N_P}(f(z_j))-f(z_{j+1}))^2\label{2.6}
	\end{align}
	for the constant $M := (2(1-\cos(\pi/N_P)))^{-1}$ (that exclusively depends on $M(\rho)\geq N_P$ and so on $\rho$). The underlying inequality $\sum_{j=1}^{N_P}x_j^2\leq M \sum_{j=1}^{N_P}(x_{j+1}-x_j)^2$ with $x_{N_P+1}=x_1$ follows for $x_\ell:=\min\{x_1,\dots,x_{N_P}\}\leq 0\leq\max\{x_1,\dots,x_{N_P}\}=:x_m$ for some indices $\ell,m\in\{1,\dots,N_P\}$ immediately from $\max\{|x_1|,\dots,|x_{N_P}|\}\leq |x_\ell|+|x_m|=|x_\ell-x_m|\leq \sum_{j=1}^{N_P}|x_{j+1}-x_j|\leq N_P^{1/2}(\sum_{j=1}^{N_P}|x_{j+1}-x_j|^2)^{1/2}$ with  triangle and Cauchy-Schwarz inequalities. The optimal constants in \cite{8} require little matrix analysis.  The above coefficients $\mu_\ell$ and a Cauchy-Schwarz inequality show that \begin{align}f(z_0)^2=\left(\sum_{\ell=1}^{N_P}\mu_{\ell}(f(z_0)-f(z_\ell))\right)^2\leq N_P\sum_{\ell=1}^{N_P}(f(z_0)-f(z_{\ell}))^2.\label{2.7}\end{align} The combination \eqref{2.6}-\eqref{2.7} results in 
	\begin{align}
	\sum_{k=0}^{N_P}f(z_k)^2\leq N_P\sum_{\ell=1}^{N_P}(f(z_0))-f(z_{\ell}))^2+M\sum_{j=1}^{N_P}(f(z_j))-f(z_{j+1}))^2.\label{2.3}
	\end{align}
	For any edge $E=\text{conv}\{a,b\}\in\e(\tT(P))$ with vertices $a,b\in\mathcal{V}(E)\subset\mathcal{V}(\tT(P))$ and an aligned triangle $ T(E)=\text{conv}\{z_0,E\}\supset E$, the tangential derivative shows $|f(a)-f(b)|=\bigg|\int_E f_{\bt}\,ds\bigg|$. The trace identity  $\dashint_{E} f\,ds =\dashint_{T(E)}f\,dx+\frac{1}{2}\dashint_{T(E)}(x-z_0)\cdot\nabla f(x)\,dx$ \cite[Lemma~2.6]{8}  implies that
	\begin{align*}
	2h_E^{-1}|T(E)|\;|f(a)-f(b)|\leq 2\bigg|\int_{T(E)}\nabla f(x)\cdot\bt_E\,dx\bigg|+\bigg|\int_{T(E)}(x-z_0)\cdot D^2f(x)\bt_E\;dx\bigg|.
	\end{align*}
	A Cauchy-Schwarz inequality provides
	\begin{align}
	|f(a)-f(b)|\leq h_E|T(E)|^{-1/2}(|f|_{1,T(E)}+h_{T(E)}|f|_{2,T(E)}).\label{2.9}
	\end{align}
	Recall $\omega_0$ from Subsection~2.1 and  note that $\sin(\omega_0)h_{T(E)}^2\leq 4|T(E)|$. This and $h_E\leq h_{T(E)}\leq h_P$ for all $E\in\e(P)$ lead  in \eqref{2.9} to
	\begin{align*}
	|f(a)-f(b)|^2\leq \frac{8}{\sin(\omega_0)}(|f|^2_{1,T(E)}+h_{P}^2|f|^2_{2,T(E)}).
	\end{align*}
	This applies to the triangles $T(E)=T(j)\in\tT(P)$ with the edges $\text{conv}\{a,b\}= \text{conv}\{z_j,z_{j+1}\}\;$\\ and $ \text{conv}\{z_0,z_j\}$  for $j=1,\dots, N_P$ from Figure~2.1.b.  Hence  \eqref{2.3} shows that
	\begin{align*}
	(N_P+M)^{-1}\frac{\sin{\omega_0}}{8}\sum_{k=0}^{N_P}f(z_k)^2\leq \sum_{E\in\e(P)}(|f|^2_{1,T(E)}+h_P^2|f|^2_{2,T(E)})=|f|^2_{1,P}+h_P^2|f|^2_{2,P}.
	\end{align*} 
	This and \eqref{2.3a} result in $
	\|I_1f\|_{L^2(P)}\leq C(\rho) (h_P|f|_{1,P}+h_P^2|f|_{2,P})$
	with  $|P|\leq \pi h_P^2$ from \cite[Lemma~1.12]{di2019hybrid},  $N_P\leq M(\rho)$ from Subsection~2.1, and  $C(\rho)^2:=\frac{8\pi(M(\rho)+M)}{3\sin(\omega_0)}$. The part $(a)$ of the lemma applies to $\partial f/\partial x_j$ for $j=1$ and for $j=2$, and so controls  the term  $|f|_{1,P}\leq C_\PF h_P|f|_{2,P}$. Consequently,
	$\|I_1f\|_{L^2(P)}\leq C(\rho)(1+C_\PF) h_P^2|f|_{2,P}.$ This and \eqref{2.1}-\eqref{2.2a} show
	$
	\|f\|_{L^2(P)}\leq (C_{\text{BH}}+C(\rho)(1+C_\PF)) h_P^2|f|_{2,P}
	$
	and conclude the proof of $(b)$ with a  re-labelled  constant $C_\PF$.
\end{proof}
\noindent Recall $\text{Dof}:H^2(P)\to\mathbb{R}^{2N_P}$ for a polygonal domain $P\in\T\in\mathbb{M}$ from \eqref{dof}.
\begin{lemma}\label{lem2.3}
	Any $v\in H^2(P)$ with $\mathrm{Dof}(v)=0$ satisfies
	$h_P^{-2}\|v\|_{L^2(P)}+h_P^{-1}|v|_{1,P}\leq C_\PF |v|_{2,P}.$
\end{lemma}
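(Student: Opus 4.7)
The plan is to reduce Lemma~\ref{lem2.3} immediately to Theorem~\ref{PF}$(b)$ by verifying that both convex-hull hypotheses hold trivially (namely, that every listed quantity vanishes) when $\mathrm{Dof}(v)=0$. Since $v\in H^2(P)\hookrightarrow C^0(\overline P)$ in 2D, the point values $v(z_j)$ make sense, and by \eqref{dof} they all equal zero, which immediately gives $0\in\mathrm{conv}\{v(z_1),\dots,v(z_{N_P})\}$.

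The slightly less automatic step is the second hypothesis, since \eqref{dof} only furnishes the edge-averaged \emph{normal} derivative and not $\int_{E(k)}\partial v/\partial x_j\,ds$ directly. For each edge $E\in\e(P)$ with unit normal $\bn_E$ and tangent $\bt_E$, I would decompose $\mathbf{e}_j=(\mathbf{e}_j\cdot\bn_E)\bn_E+(\mathbf{e}_j\cdot\bt_E)\bt_E$ for $j=1,2$ to write
\[
\int_E\frac{\partial v}{\partial x_j}\,ds=(\mathbf{e}_j\cdot\bn_E)\int_E v_\bn\,ds+(\mathbf{e}_j\cdot\bt_E)\int_E v_\bt\,ds.
\]
The first integral vanishes by the second family of dofs in \eqref{dof}. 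The second integral equals $v(b)-v(a)$ (up to orientation) by the fundamental theorem of calculus along $E$, where $a,b$ are the two endpoints of $E$; these are vertices of $P$, so the first family of dofs in \eqref{dof} forces this to be zero as well. Hence $\int_{E(k)}\partial v/\partial x_j\,ds=0$ for every $k=1,\dots,N_P$ and $j=1,2$, and so $0\in\mathrm{conv}\{\int_{E(k)}\partial v/\partial x_j\,ds:k=1,\dots,N_P\}$ trivially.

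With both hypotheses verified, Theorem~\ref{PF}$(b)$ applies to $v$ and yields $h_P^{-2}\|v\|_{L^2(P)}+h_P^{-1}|v|_{1,P}\leq C_\PF|v|_{2,P}$ with the same constant $C_\PF$, which is the assertion. I do not anticipate any real obstacle: the only non-mechanical observation is the normal/tangential decomposition on each edge, which is the standard device for converting normal-flux information plus vertex values into control of all first-order edge moments.
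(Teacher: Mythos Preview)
Your proof is correct and follows essentially the same approach as the paper: both arguments decompose the gradient along each edge into normal and tangential parts, use the normal-derivative dofs for the former and the fundamental theorem of calculus together with the vertex dofs for the latter, conclude that $\int_{E(k)}\nabla v\,ds=0$ for every $k$, and then invoke Theorem~\ref{PF}$(b)$. The only cosmetic difference is that the paper works with the full gradient vector at once (obtaining \eqref{2.20s}--\eqref{s}) while you treat the two Cartesian components separately.
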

\begin{proof}
	If $\bn:=(\bn_x,\bn_y)$ is an outward unit  normal  to an edge $E$, then the unit tangential vector along $E$ is $\bt:=(\bt_x,\bt_y)=(-\bn_y,\bn_x)$. This leads to the split $\nabla v= v_\bn \bn+v_{\bt} \bt$. An integration along $E(k)$ of the tangential derivative   implies for all $k=1,\dots,N_P$ that
	\begin{align}
	\int_{E(k)}\nabla v\,ds=\Big(\int_{E(k)}v_\bn\,ds\Big) \bn_{E(k)}+(v(z_{k+1})-v(z_k))\bt_{E(k)}.\label{2.20s}
	\end{align}
	This and the re-summation $\sum_{k=1}^{N_P}(v(z_{k+1})-v(z_k))\bt_{E(k)}=\sum_{k=1}^{N_P}(\bt_{E(k-1)}-\bt_{E(k)})v(z_k)$ lead  to
	\begin{align}
	\int_{E(k)}\nabla v\,ds=\text{dof}_{N_P+k}(v)\bn_{E(k)}+(\bt_{E(k-1)}-\bt_{E(k)})\text{dof}_k(v).\label{s}
	\end{align}If $\text{Dof}(v)=0$, then $\int_{E(k)} \nabla v\,ds=0$ for all $k=1,\dots,N_P$ from \eqref{s}. This and $v(z_j)=\text{dof}_j(v)=0$ for all $j=1,\dots,N_P$ allow the Poincar\'e-Friedrichs inequality in Theorem~\ref{PF}.b. 
\end{proof}

\subsection{Biharmonic model problem}
Define the scalar product $a(\cdot,\cdot):V\times V\to\mathbb{R}$ for $u,v\in V=H^2_0(\Omega)$  by
\begin{align}
a(u,v):= \int_\Omega D^2u:D^2v\,dx\quad\text{with}\;D^2u:=\begin{pmatrix}
u_{11} &u_{12}\\u_{12}& u_{22}
\end{pmatrix}\label{1.2}
\end{align}
with $D^2u:D^2v := u_{11}v_{11}+2 u_{12} v_{12}+u_{22}v_{22}$. The subscripts $\alpha,\beta=1,2$ abbreviate the second-order partial derivatives $u_{\alpha\beta}:=\frac{\partial^2 u}{\partial x_\alpha\partial x_\beta}$. The bilinear form $a_\pw$ and the differential operator $D^2_\pw$ denote the corresponding piecewise versions (with respect to a partition $\T$ or $\tT$ suppressed in the notation). The local contribution $a^P(\cdot,\cdot)$ is the semi-scalar product \[a^P(u,v):=\int_P D^2u:D^2v\,dx\quad\text{for}\; u,v \in H^2(P).\] 
\noindent The scalar product $a(\cdot,\cdot)$   induces the energy norm  $|v|_{2,\Omega}:= a(v,v)^{1/2}$  equivalent to the Sobolev norm $\|\cdot\|_{2,\Omega}$ owing to the Friedrichs inequality \cite[Sec.~10.6]{7} and $(V,a(\cdot,\cdot))$ is a Hilbert space. Given any $F\in V^*=H^{-2}(\Omega)$, the Riesz representation is the weak solution $u\in V$ to
\begin{align}
a(u,v)=F(v)\quad\text{for all}\; v\in V.\label{3}
\end{align}

\medskip 
\noindent\textit{Elliptic regularity}. For the fixed polygonal bounded Lipschitz domain $\Omega$, there exist positive constants $\sigma_{\text{reg}}>1/2$ and $C_{\text{reg}}$ \cite{bacuta2002shift,blum1980boundary,carstensen2021lower} such that $F\in H^{-s}(\Omega)$  and  $2-\sigma\leq s \leq 2$ for $\sigma:=\min\{\sigma_{\text{reg}},1\}$ imply $u\in V\cap H^{4-s}$ and
\begin{align}
\|u\|_{4-s,\Omega}\leq C_{\text{reg}}\|F\|_{-s,\Omega}.\label{2.14}
\end{align}

\subsection{Galerkin projection}
The $H^2$ elliptic projection operator $G:H^2(P)\to\p_2(P)$ is defined, for any $v\in H^2(P)$, by $G v\in\p_2(P)$ and
\begin{align}
a^P(G v, \chi) = a^P(v,\chi)\quad\text{for all}\;\chi\in\p_2(P)\label{pid1}
\end{align}
with the  additional conditions (i.e., three equations to fix the affine contribution)
\begin{align}
\frac{1}{N_P}\sum_{j=1}^{N_P}G v(z_j)=\frac{1}{N_P}\sum_{j=1}^{N_P}v(z_j)\quad
\text{and}\quad\int_{\partial P}\nabla\pid v\,ds =\int_{\partial P}\nabla v\,ds.\label{pid2}
\end{align}
Equation \eqref{pid1} determines $\pid v\in\p_2(P)$ up to affine functions and the additional three equations in \eqref{pid2} define  $\pid v\in\p_2(P)$  uniquely for  $v\in H^2(P)$.  The linear operator $\pid:H^2(P)\to \p_2(P)$ is a projection onto $\p_2(P)$. An integration by parts and \eqref{pid2} imply, for all $v\in H^2(P)$, that \begin{align}\Pi_0D^2v=\frac{1}{|P|}\int_{\partial P}\nabla v\,ds=\frac{1}{|P|}\int_{\partial P}\nabla \pid v\,ds=\Pi_0D^2\pid v=D^2\pid v.\label{2.5a}
\end{align}
\begin{lemma}[approximation error of $G$]\label{lem2.1}
	Any $v\in H^2(P)$  with $Gv\in\p_2(P)$ from \eqref{pid1}-\eqref{pid2} satisfies  
	$C_\PF^{-1}\sum_{m=0}^1h_P^{m-2}|v-Gv|_{m,P}\leq |v-Gv|_{2,P}\leq  |v|_{2,P}$ and there exists a positive constant $C_{\mathrm{apx}}$ (that exclusively depends on $\rho$) such that  $|v-Gv|_{2,P}\leq C_{\mathrm{apx}}h_P^{s}|v|_{2+s,P}$
	for  $v\in H^{2+s}(P)$ and $0<s\leq 1$.
\end{lemma}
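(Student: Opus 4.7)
I would split the argument into the three stated inequalities.

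\textbf{Upper bound $|v-Gv|_{2,P}\le|v|_{2,P}$.} The defining identity \eqref{pid1} gives $a^P(v-Gv,\chi)=0$ for every $\chi\in\p_2(P)$, and $Gv\in\p_2(P)$ is itself such a test function. Hence $a^P(v-Gv,Gv)=0$ and so
\[
|v-Gv|_{2,P}^2 = a^P(v-Gv,v-Gv)=a^P(v-Gv,v)\le |v-Gv|_{2,P}\,|v|_{2,P}
\]
by Cauchy--Schwarz on $a^P$, giving the claim after division.

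\textbf{Lower bound via Poincar\'e--Friedrichs.} I would apply Theorem \ref{PF}.b to $f:=v-Gv\in H^2(P)$. The two convex-hull conditions must be verified. From the first identity of \eqref{pid2}, $\sum_{j=1}^{N_P}(v-Gv)(z_j)=0$, so its arithmetic mean vanishes and therefore $0\in\mathrm{conv}\{f(z_1),\dots,f(z_{N_P})\}$. From the second identity in \eqref{pid2}, $\sum_{k=1}^{N_P}\int_{E(k)}\partial_{x_j}(v-Gv)\,ds=0$ for $j=1,2$, so the same averaging argument places $0$ in the convex hull of the edge integrals of each partial derivative. Theorem \ref{PF}.b then yields
\[
\sum_{m=0}^{1}h_P^{m-2}|v-Gv|_{m,P}\le C_{\PF}\,|v-Gv|_{2,P},
\]
which is exactly the desired lower bound.

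\textbf{Approximation estimate.} Since $G$ is a projection onto $\p_2(P)$, for any $q\in\p_2(P)$ one has $Gq=q$ and hence, by linearity, $v-Gv=(v-q)-G(v-q)$. The upper bound from the first step applied to $v-q$ gives
\[
|v-Gv|_{2,P}=|(v-q)-G(v-q)|_{2,P}\le |v-q|_{2,P}\qquad\text{for every }q\in\p_2(P).
\]
By mesh regularity (M2), $P$ is star-shaped with respect to a ball of radius $\rho h_P$, so the Dupont--Scott version of the Bramble--Hilbert lemma (e.g.\ the averaged Taylor polynomial) provides a $q\in\p_2(P)$ with $|v-q|_{2,P}\le C_{\mathrm{apx}}h_P^{s}|v|_{2+s,P}$ for every $v\in H^{2+s}(P)$ and $0<s\le 1$, where $C_{\mathrm{apx}}$ depends only on $\rho$. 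Taking the infimum concludes the proof.

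The only non-routine point is the fractional case $0<s<1$ of the last step; I would either invoke the standard interpolation-space version of Bramble--Hilbert on star-shaped domains or, equivalently, bound the averaged Taylor remainder directly using a Sobolev--Slobodeckij seminorm. Both routes are available in the literature and the constant depends only on $\rho$ via the star-shape constant in (M2).
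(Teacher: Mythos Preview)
Your proof is correct and follows essentially the same approach as the paper: the paper also verifies the hypotheses of Theorem~\ref{PF}.b from \eqref{pid2}, invokes the Pythagoras identity (your Cauchy--Schwarz argument is equivalent) for $|v-Gv|_{2,P}\le|v|_{2,P}$, and combines the best-approximation property of $G$ with the Bramble--Hilbert lemma on the star-shaped polygon $P$ for the final estimate.
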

\begin{proof}
	The condition \eqref{pid2} implies that $\sum_{j=1}^{N_P}(v-Gv)(z_j)=0$ and $\int_{\partial P}\nabla(v-Gv)\,ds=0$. Hence the Poincar\'e-Friedrichs inequality in Theorem~\ref{PF}.b proves that $C_\PF^{-1}\sum_{m=0}^1h_P^{m-2}|v-Gv|_{m,P}\leq |v-Gv|_{2,P}$. The Pythagoras identity $|v-Gv|^2_{2,P}+|Gv|^2_{2,P}=|v|^2_{2,P}$ from \eqref{pid1} provides $|v-Gv|_{2,P}\leq |v|_{2,P}$. 	The definition of $G$ in \eqref{pid1} shows that $|v-Gv|_{2,P}\leq |v-\chi|_{2,P}$ for any $\chi\in\p_2(P)$. The Bramble-Hilbert lemma \cite[Thm.~6.1]{10} concludes the proof.
\end{proof}
\noindent Recall $\text{Dof}:H^2(P)\to\mathbb{R}^{2N_P}$ for a polygonal domain $P\in\T\in\mathbb{M}$ from \eqref{dof}.
\begin{lemma}[boundedness of Dof]\label{lem}	There exists a positive constant $C_d$ (that exclusively depends on $\rho$) such that any $v\in H^2(P)$ satisfies
	$|\mathrm{Dof}(v-Gv)|_{\ell^2}\leq C_dh_P|v-Gv|_{2,P}.$
\end{lemma}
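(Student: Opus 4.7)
The plan is to write $w := v - Gv$, split $\mathrm{Dof}(w)\in\mathbb{R}^{2N_P}$ into the $N_P$ vertex evaluations $w(z_j)$ and the $N_P$ edge normal integrals $\int_{E(k)}w_{\bn}\,ds$, and control each group by $h_P|w|_{2,P}$ separately. The two key inputs are the sub-triangulation $\widetilde{\T}(P)$ from Subsection~2.1, which is uniformly shape-regular so that all local trace/Sobolev constants on its triangles depend only on $\rho$, and the Poincar\'e-Friedrichs bounds from Lemma~\ref{lem2.1}, namely $\|w\|_{L^2(P)}\le C_\PF h_P^2|w|_{2,P}$ and $|w|_{1,P}\le C_\PF h_P|w|_{2,P}$ (these are available precisely because $Gv$ enforces \eqref{pid2}).

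For the vertex part, each $z_j\in\mathcal V(P)$ is a vertex of some triangle $T\in\widetilde{\T}(P)$. A scaled two-dimensional Sobolev embedding $H^2(T)\hookrightarrow C^0(\overline T)$, executed on a reference triangle and pulled back via an affine map whose distortion is controlled by the shape regularity of $\widetilde{\T}(P)$, gives
\[
|w(z_j)|\lesssim h_T^{-1}\|w\|_{L^2(T)}+|w|_{1,T}+h_T|w|_{2,T}.
\]
Plugging in the Poincar\'e-Friedrichs estimates and $h_T\le h_P$ yields $|w(z_j)|\lesssim h_P|w|_{2,P}$; summation over the $N_P\le M(\rho)$ vertices produces $\big(\sum_{j=1}^{N_P}w(z_j)^2\big)^{1/2}\lesssim h_P|w|_{2,P}$.

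For the edge part, each $E(k)\in\e(P)$ is an edge of the aligned triangle $T(k)\in\widetilde{\T}(P)$ from Figure~2.1.b. A Cauchy-Schwarz inequality together with the standard trace inequality on $T(k)$ (again with a $\rho$-dependent constant by shape regularity) gives
\[
\Big|\int_{E(k)}w_{\bn}\,ds\Big|^2\le h_{E(k)}\|w_{\bn}\|_{L^2(E(k))}^2\lesssim |w|_{1,T(k)}^2+h_{E(k)}^2|w|_{2,T(k)}^2.
\]
Using $h_{E(k)}\le h_P$, the Poincar\'e-Friedrichs bound $|w|_{1,P}\lesssim h_P|w|_{2,P}$ from Lemma~\ref{lem2.1}, and summing over $k=1,\dots,N_P$ (a bounded number of terms) produces $\big(\sum_{k=1}^{N_P}|\int_{E(k)}w_{\bn}\,ds|^2\big)^{1/2}\lesssim h_P|w|_{2,P}$.

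Combining the two groups gives $|\mathrm{Dof}(w)|_{\ell^2}\lesssim h_P|w|_{2,P}$, with a constant $C_d$ absorbing $C_\PF$, the trace/Sobolev constants, and $M(\rho)$, all of which depend only on $\rho$. The principal care point is checking this $\rho$-only dependence line by line; the Sobolev embedding for vertex values is the least automatic step because it requires a scaling argument, but the uniform shape regularity of $\widetilde{\T}(P)$ makes the reference-triangle transfer routine, so no genuine obstacle remains.
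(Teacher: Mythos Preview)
Your proof is correct and follows essentially the same approach as the paper: split $\mathrm{Dof}(w)$ into vertex values and edge integrals, control the vertex values by a scaled Sobolev embedding and the edge integrals by Cauchy--Schwarz plus a trace inequality, and then absorb the lower-order norms via the Poincar\'e--Friedrichs estimates for $w=v-Gv$ from Lemma~\ref{lem2.1}. The only cosmetic difference is that the paper applies the Sobolev and trace inequalities directly on the polygon $P$ (with $\rho$-dependent constants from \cite{14}), whereas you localize to a triangle of the shape-regular sub-triangulation $\widetilde{\mathcal T}(P)$ first; both routes yield the same constant structure $C_d\approx(C_S+C_T)(1+C_\PF)$.
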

\begin{proof}
	The scaled Sobolev inequality from \cite[Sec.~2.1.3]{14}  leads for $w:=v-Gv$  to
	\begin{align}
	|w(z_0)|\leq \|w\|_{L^\infty(P)}\leq C_S \sum_{m=0}^2h_P^{m-1}|w|_{m,P}\label{2.16}
	\end{align}
	with a positive constant $C_S$ (that exclusively depends on $\rho$). A Cauchy-Schwarz inequality and the trace inequality   $\|w\|^2_{L^2(E)}\leq C_T(h_E^{-1}\|w\|^2_{L^2(P)}+h_E\|\nabla w\|^2_{L^2(P)})$ (e.g., from \cite[p.~554]{14})  for any edge $E\in\e(P)$  result in
	\begin{align}
	\Big|\int_{E}w_\bn\,ds\Big|\leq h_{E}^{1/2}\|w_\bn\|_{L^2(\partial P)}\leq C_T(|w|_{1,P}+h_E|w|_{2,P}).\label{2.17}
	\end{align}
	The combination of     \eqref{2.16}-\eqref{2.17} and Lemma~\ref{lem2.1}   imply that
	\begin{align}
	|w(z_0)|\leq C_S(1+C_\PF)h_P|w|_{2,P}\quad\text{and}\quad\Big|\int_{E}w_\bn\,ds\Big|\leq C_T(1+C_\PF)h_P|w|_{2,P}.\label{2.15a}
	\end{align}
	This concludes the proof of the lemma with $C_d:=(C_S+C_T)(1+C_\PF)$.
\end{proof}
The following lemma estimates   $|Gv|_{m,P}$ for $m=0,1,2$ and  the Galerkin projection $G$  in terms of dofs of $v$ for any $v\in H^2(P)$.
\begin{lemma}[$G$ as a function of Dof]\label{lem2.2}
	The projection operator $Gv$  is  computable in terms of the degrees of freedom $\mathrm{Dof}(v)\in\mathbb{R}^{2N_P}$ for any $v\in H^2(P)$ and	 
	$\sum_{m=0}^2h_P^{m-1}|Gv|_{m,P}\leq C_g|\mathrm{Dof}(v)|_{\ell^2}$
	holds with  a positive constant $C_g$ (that exclusively depends on $\rho$).
\end{lemma}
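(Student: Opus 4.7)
The plan is to first show $Gv$ depends linearly on $\mathrm{Dof}(v)$ via an integration-by-parts reduction, and then to estimate the three seminorms one at a time.

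For any $\chi\in\mathcal{P}_2(P)$ the Hessian $D^2\chi$ is constant, so \eqref{pid1} gives $a^P(Gv,\chi)=a^P(v,\chi)=|P|\,D^2\chi:\Pi_0 D^2 v$, and Green's theorem rewrites the average Hessian as $(\Pi_0 D^2 v)_{ij}=|P|^{-1}\sum_k(\bn_{E(k)})_j\bigl(\int_{E(k)}\nabla v\,ds\bigr)_i$. By \eqref{2.20s}, every edge integral $\int_{E(k)}\nabla v\,ds$ is a linear combination of one edge-moment dof and two vertex dofs, so $D^2 Gv=\Pi_0 D^2 v$ is a computable linear function of $\mathrm{Dof}(v)$. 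The two side conditions in \eqref{pid2} involve only $v(z_j)$ and the same $\int_{\partial P}\nabla v\,ds$, which establishes computability. Combining the representation above with Cauchy-Schwarz and $N_P\le M(\rho)$ yields the pointwise estimate $|D^2 Gv|\lesssim|P|^{-1}|\mathrm{Dof}(v)|_{\ell^2}$; since $D^2 Gv$ is constant one has $|Gv|_{2,P}=|P|^{1/2}|D^2 Gv|$, and $|P|\ge\pi\rho^2 h_P^2$ from (M2) gives $h_P|Gv|_{2,P}\lesssim|\mathrm{Dof}(v)|_{\ell^2}$.

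For the lower-order seminorms I would parameterize $Gv(x)=c+b\cdot(x-z_*)+\tfrac12(x-z_*)^{\top}A(x-z_*)$ with $A:=D^2 Gv$ and the vertex centroid $z_*:=N_P^{-1}\sum_j z_j$ (so $\sum_j(z_j-z_*)=0$). Inserting $\nabla Gv=b+A(x-z_*)$ into the condition $\int_{\partial P}\nabla Gv\,ds=\int_{\partial P}\nabla v\,ds$ of \eqref{pid2} solves explicitly for $b$; with $|\partial P|\gtrsim h_P$, $\bigl|\int_{\partial P}(x-z_*)\,ds\bigr|\lesssim h_P^2$, and the previous bound on $|A|$, this delivers $|b|\lesssim h_P^{-1}|\mathrm{Dof}(v)|_{\ell^2}$. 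Summing the parameterization at the vertices and using $\sum_j(z_j-z_*)=0$ gives $c=N_P^{-1}\sum_j v(z_j)-\tfrac{1}{2N_P}\sum_j(z_j-z_*)^{\top}A(z_j-z_*)$, whence $|c|\lesssim|\mathrm{Dof}(v)|_{\ell^2}$. The pointwise bounds $\|\nabla Gv\|_{L^\infty(P)}\le|b|+|A|h_P$ and $\|Gv\|_{L^\infty(P)}\le|c|+|b|h_P+|A|h_P^2$, multiplied by $|P|^{1/2}\lesssim h_P$, produce $|Gv|_{1,P}\lesssim|\mathrm{Dof}(v)|_{\ell^2}$ and $h_P^{-1}\|Gv\|_{L^2(P)}\lesssim|\mathrm{Dof}(v)|_{\ell^2}$; summation with the previous estimate completes the claim.

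The principal technicality is the correct bookkeeping of $h_P$-powers when recovering the coefficients $b$ and $c$; each shape-dependent factor ($N_P$, $|\partial P|/h_P$, $|P|/h_P^2$, and the coefficients from unit normals/tangents in \eqref{2.20s}) is uniformly controlled by $\rho$ via mesh regularity (M2) and the bound $N_P\le M(\rho)$ from Subsection~2.1, which is exactly what ensures the constant $C_g$ depends on $\rho$ alone.
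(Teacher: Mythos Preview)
Your proof is correct and complete. Both you and the paper establish computability and the $H^2$-seminorm bound via the same integration-by-parts reduction of $D^2Gv=\Pi_0 D^2v$ to the edge integrals \eqref{2.20s}, but the treatments of the lower-order terms differ genuinely.

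The paper (Appendix~A) introduces an auxiliary affine function $g(x)=a+B\cdot(x-z_*)$ with $a=N_P^{-1}\sum_j v(z_j)$ and $B=|\partial P|^{-1}\int_{\partial P}\nabla v\,ds$, applies the Poincar\'e--Friedrichs inequality of Theorem~\ref{PF}.b to $Gv-g$ (which satisfies the hypotheses there by construction), and then estimates $\|g\|_{L^2(P)}$ and $|g|_{1,P}$ separately via an inverse estimate. Your route is more direct: you write $Gv$ explicitly as a quadratic centered at the vertex centroid $z_*$, solve the side conditions \eqref{pid2} for the linear coefficient $b$ and the constant $c$, and pass from pointwise coefficient bounds to $L^2$ bounds via $|P|^{1/2}\lesssim h_P$. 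Your argument is more elementary---it avoids invoking Theorem~\ref{PF} altogether and exploits that $Gv\in\p_2(P)$ has only six scalar coefficients---while the paper's approach is more structural and would extend more cleanly to higher-order projections where explicit coefficient recovery becomes cumbersome.
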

\begin{proof}
	 Antonietti \textit{et al.} discuss the proof \cite[Lemma~3.3]{antonietti2018fully} of the computability of the projection operator $G$ in terms of the dofs from \eqref{dof}. Appendix~A provides  details of this first part and the proof of the estimates of $|Gv|_{m,P}$ for $m=0,1,2$.
\end{proof}

\section{Abstract framework and fundamental estimates}
 
\subsection{Hypotheses}
Given any polygonal domain  $P\in\T\in\mathbb{M}$, recall the geometry and the local degrees of freedom from  Subsection~2.1 and merely suppose the hypotheses \ref{H1}-\ref{H2} throughout this paper.
\begin{enumerate}[label={(\bfseries H\arabic*)}]
\item\label{H1} The vector space $V_h(P)$ is of dimension $2N_P$, satisfies $\displaystyle \p_2(P)\subseteq V_h(P)\subset H^2(P)$, and the triplet $(P,V_h(P),(\text{dof}_1,\dots,\text{dof}_{2N_P}))$ is a finite element in the sense of Ciarlet.
\end{enumerate}
\noindent The unique existence of  a nodal basis $\psi_1,\dots,\psi_{2N_P}$ of $V_h(P)$ with $\text{dof}_k(\psi_j)=\delta_{jk}$ for all $j,k=1,\dots,2N_P$ is a consequence for  any finite element in the sense of Ciarlet \cite[Chapter~3]{7}.
\begin{enumerate}[label={(\bfseries H2)}]
\item\label{H2} The aforementioned nodal basis functions $\psi_1,\dots,\psi_{2N_P}$   satisfy
$h_P\Big(\sum_{j=1}^{2N_P}|\psi_j|^2_{2,P}\Big)^{1/2}\leq C_{\text{stab}}$
for a positive constant $C_{\text{stab}}$ (that exclusively depends on $\rho$).
\end{enumerate}
Notice that \ref{H1}-\ref{H2} also imply the uniform stability of the discrete problem for the natural stabilization term $s_h$ in \eqref{ex} below. The road map of the proofs in the two examples below is outlined in the seminal contribution \cite{chen2020nonconforming} for a related virtual element space $V_h(P)$.  Appendix~B and C independently provide  details for the two examples below. 
\subsection{Examples of the discrete space $V_h(P)$}
This subsection presents two  examples \cite{antonietti2018fully,zhao2018morley} of the local discrete space $V_h(P)$ with \ref{H1}-\ref{H2}.  Recall that $\p_r(P)$ is the vector space of polynomials of degree $\leq r$ regarded as functions in $P$, and fix the parameter $r=-1,0,1,2$ with the convention $\p_{-1}(P)=\{0\}$.

\subsubsection{First example of $V_h(P)$ from \cite{antonietti2018fully, chen2020nonconforming}} 
The discrete space in \cite[Sec.~3]{antonietti2018fully} and in \cite[Sec.~2.3]{chen2020nonconforming}  solves the biharmonic problem with   boundary conditions for $\p_0(\e(P)):=\{q\in L^\infty(\partial P):\forall E\in\e(P)\quad q|_E\in\p_0(E)\}$,
\begin{align}
\widehat{V}_h(P)&:=\begin{rcases}\begin{dcases} v\in H^2(P):  \exists\; f\in \p_r(P)\; \exists\;g\in\p_0(\e(P))\;\exists\; a_1,\dots,a_{N_P}\in\mathbb{R}\\\quad \forall\;w\in H^2(P)\quad a^P(v,w)=(f, w)_{L^2(P)}+(g, w_\bn)_{L^2(\partial P)}+\sum_{j=1}^{N_P} a_j w(z_j)\end{dcases}\end{rcases},\label{vhp}\\
V_h(P)&:=\{v\in\widehat{V}_h(P):v-Gv\perp\p_r(P)\quad\text{in}\;L^2(P)\}.\label{hvhp}
\end{align}
\begin{proposition}The discrete  space $V_h(P)$ from \eqref{vhp}-\eqref{hvhp}   satisfies  \ref{H1}-\ref{H2}.
\end{proposition}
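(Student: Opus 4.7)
The plan is to verify in turn: (i) $\p_2(P) \subseteq V_h(P)$, (ii) unisolvence of the $2N_P$ dofs on $V_h(P)$, (iii) the dimension count $\dim V_h(P) = 2N_P$, and finally the stability bound \ref{H2}. For (i), any $p \in \p_2(P)$ has constant $D^2 p$ and $\Delta^2 p = 0$, so twofold integration by parts gives $a^P(p, w) = \int_{\partial P}(D^2 p\,\bn)\cdot\nabla w\,ds$. Splitting $\nabla w = w_\bn\bn + w_\bt\bt$ on each edge $E$, the normal part is of the form $(g, w_\bn)_{L^2(\partial P)}$ with $g|_E := (D^2 p\,\bn_E)\cdot\bn_E \in \p_0(\e(P))$, while a further integration of the tangential part along $E$ converts $(D^2 p\,\bn_E)\cdot\bt_E\, w_\bt$ into vertex differences of $w$, producing $\sum_j a_j w(z_j)$. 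Hence $p \in \widehat{V}_h(P)$ with $f = 0$; since $Gp = p$ for $p \in \p_2(P)$, the orthogonality $p - Gp \perp \p_r(P)$ is trivial, giving $p \in V_h(P)$.

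For (ii), suppose $v \in V_h(P)$ with $\mathrm{Dof}(v) = 0$. By Lemma~\ref{lem2.2}, $Gv$ is a linear function of $\mathrm{Dof}(v)$, so $Gv = 0$ and the defining orthogonality collapses to $v \perp \p_r(P)$ in $L^2(P)$. Testing the variational characterisation of $\widehat{V}_h(P)$ with $w = v$ yields
\begin{align*}
|v|_{2,P}^2 \;=\; (f, v)_{L^2(P)} + \sum_{E \in \e(P)} g|_E \int_E v_\bn\,ds + \sum_{j=1}^{N_P} a_j v(z_j),
\end{align*}
where the edge and vertex contributions vanish because $\mathrm{Dof}(v) = 0$, and $(f, v) = 0$ since $f \in \p_r(P)$ is $L^2$-orthogonal to $v$. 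Thus $v \in \p_1(P)$ with zero values at $N_P \geq 3$ non-collinear vertices, forcing $v = 0$. For (iii), I would parametrise $\widehat{V}_h(P)/\p_1(P)$ via the injective data triple $(f, g, \vec a)$ modulo the three solvability constraints $L|_{\p_1(P)} = 0$, giving $\dim \widehat{V}_h(P) = \dim \p_r(P) + 2N_P$; the $\dim \p_r(P)$ orthogonality conditions $v - Gv \perp \p_r(P)$ are independent (surjectivity of $v \mapsto \Pi_r(v - Gv)$ onto $\p_r(P)$ follows by perturbing $f$), so $\dim V_h(P) = 2N_P$, matching the number of dofs and making $\mathrm{Dof}$ a bijection.

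The bound \ref{H2} reduces to an inverse-type estimate $|v|_{2,P} \leq C h_P^{-1}|\mathrm{Dof}(v)|_{\ell^2}$ on $V_h(P)$, which, applied to each nodal basis function $\psi_j$ with $|\mathrm{Dof}(\psi_j)|_{\ell^2} = 1$ and summed over $j = 1,\ldots,2N_P \leq 2M(\rho)$, yields the claim. The splitting $v = Gv + (v - Gv)$ and Pythagoras from $a^P(Gv, v - Gv) = 0$ give $|v|_{2,P}^2 = |Gv|_{2,P}^2 + |v - Gv|_{2,P}^2$; Lemma~\ref{lem2.2} controls $|Gv|_{2,P}$ by $C_g h_P^{-1}|\mathrm{Dof}(v)|_{\ell^2}$. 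The \textbf{main obstacle} is bounding $|v - Gv|_{2,P}$: applying the variational characterisation to $v - Gv \in V_h(P)$ (whose $\p_r(P)$-orthogonality kills the $f'$-contribution) gives
\begin{align*}
|v - Gv|_{2,P}^2 \;=\; \sum_{E \in \e(P)} g'|_E \int_E (v - Gv)_\bn\,ds + \sum_{j=1}^{N_P} a'_j (v - Gv)(z_j),
\end{align*}
after which one must bound the loads $(g', \vec{a}')$ by $C h_P^{-1}|v-Gv|_{2,P}$ via a local duality argument with bubble test functions supported on the triangles of $\tT(P)$, following the road map of \cite{chen2020nonconforming}; combined with Lemma~\ref{lem} for $|\mathrm{Dof}(v - Gv)|_{\ell^2}$, this closes the inequality. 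The uniform shape-regularity of $\tT(P)$ from Subsection~2.1 ensures all constants depend only on $\rho$; the detailed construction is in Appendix~B.
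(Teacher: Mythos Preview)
Your verification of \ref{H1} parts (i) and (ii) is correct and essentially coincides with the paper's injectivity argument (Step~8 of Appendix~B). Your dimension count in (iii) is a legitimate alternative to the paper's constructive route, but two ingredients are only asserted: that every compatible triple $(f,g,\vec a)$ actually arises from some $v\in H^2(P)$ (existence for the biharmonic problem with these data), and the surjectivity of $v\mapsto\Pi_r(v-Gv)$ onto $\p_r(P)$. The paper handles both concretely: an HCT interpolant with prescribed dofs furnishes the pre-image (Steps~1--5), and the isomorphism $\mathcal L:\p_r(P)\to\p_r(P),\ f\mapsto\Pi_r u(f)$ (Step~7) supplies exactly the surjectivity you invoke by ``perturbing $f$''.

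Your plan for \ref{H2} has a genuine gap in the closing step. After the duality argument yields $|g'|_E|+|a'_j|\lesssim h_P^{-1}|v-Gv|_{2,P}$ (which is indeed achievable by testing with suitable nodal functions and controlling $\|f'\|_{L^2(P)}$ via the bubble $b_Pf'$), Cauchy--Schwarz in your displayed identity gives
\[
|v-Gv|_{2,P}\;\le\; C\,h_P^{-1}\,|\mathrm{Dof}(v-Gv)|_{\ell^2}.
\]
Here you invoke Lemma~\ref{lem}, but that lemma points the \emph{wrong way}: it says $|\mathrm{Dof}(v-Gv)|_{\ell^2}\le C_d h_P|v-Gv|_{2,P}$, and substituting this yields only the tautology $|v-Gv|_{2,P}\le CC_d|v-Gv|_{2,P}$. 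What you actually need to close the loop is $|\mathrm{Dof}(v-Gv)|_{\ell^2}\lesssim|\mathrm{Dof}(v)|_{\ell^2}$, which follows from $|\mathrm{Dof}(Gv)|_{\ell^2}\lesssim|\mathrm{Dof}(v)|_{\ell^2}$ via Lemma~\ref{lem2.2} and elementary inverse estimates for $Gv\in\p_2(P)$---not Lemma~\ref{lem}. The paper's Appendix~B sidesteps this entirely by a different mechanism: for each nodal basis function $\psi_h$ it builds an HCT interpolant $\psi_\h$ with the explicit bound $|\psi_\h|_{2,P}\le C_\h(1+\rho^{-1})h_P^{-1}$, tests the weak formulation with $\psi_h-\psi_\h\in V_0$ to obtain $|\psi_h|_{2,P}^2\le|\psi_h|_{2,P}|\psi_\h|_{2,P}+(f,\psi_h-\psi_\h)_{L^2(P)}$, and handles the last term through the orthogonality in \eqref{hvhp} and a bubble estimate on $\|f\|_{L^2(P)}$.
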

\begin{proof} 
The arguments in \cite[Lemma~3.4-3.5]{chen2020nonconforming} and in \cite[Appendix~A]{chen2020nonconforming} can be adopted    for the proof of \ref{H1} and  of \ref{H2} for $r=-1$. Appendix~B presents a simpler proof that also covers $r=0,1,2$. 	
\end{proof}

\subsubsection{Second example of $V_h(P)$ generalizes \cite{zhao2018morley}}
Recall the set $\e(P)=\{E(1),\dots,E(N_P)\}$ of edges   and the set  $\mathcal{V}(P)=\{z_1,\dots,z_{N_P}\}$ of vertices  along the polygon $\partial P=E(1)\cup\dots \cup E(N_P)$ as in Subsection~2.1. The following generalization  of $V_h(P)$ in \cite[Sec.~4]{zhao2018morley} requires further notation with corners as depicted in Figure~2.1.c. The boundary of  the polygon $\partial P:=\cup_{j=1}^{J}\text{conv}\{\zeta_j,\zeta_{j+1}\}$  is also a polygon of the corner points $\zeta_1,\dots,\zeta_{J}\subset \{z_1,\dots,z_{N_P}\}$ with  indices $\zeta_j=z_{k(j)}$, $\zeta_{j+1}=z_{k(j)+m(j)}$, $k(j+1)=k(j)+m(j)$, and $k(J+1):=k(1)$. By definition,  the interior angle at a corner $\zeta_j$ is different from $0,\pi,2\pi$, while it is equal to $\pi$ at all other vertices $z_j\in\mathcal{V}(P)\setminus\{\zeta_1,\dots,\zeta_J\}$. Given the one-dimensional side $\overline{\gamma(j)}:=\text{conv}\{\zeta_j,\zeta_{j+1}\}= E(k(j))\cup\dots\cup E(k(j)+m(j))$, consider the $(m(j)+2)$-dimensional quadratic $C^1$ spline space 
\[S(j):=\p_2(\e(\gamma(j)))\cap C^1(\gamma(j))\quad\text{for}\;j=1,\dots,J.\]
The vertices $z_{k(j)},\dots,z_{k(j)+m(j)}$ on $\gamma(j)$ lead to a partition of $\gamma(j)$, written as $\e(\gamma(j))$, and the subset of functions in $S(j)$ that vanish at all those vertices form a one-dimensional subspace span$\{\psi_j\}$ of $S(j)$. This is elementary to verify and Appendix~C exploits pictures and norms of $\psi_j$. It turns out that  two conditions on the sign $\psi_j|_{E(k(j))}\geq 0$ and on the scaling $\|\psi_j\|_{L^\infty(\gamma(j))}=1$ determine $\psi_j$ uniquely. So  $\psi_j\in S(j)$ is fixed by the geometry of $P$ in Figure~2.1.c. The second class of VEs is generalized through a  linear functional $\Lambda_j:S(j)\to\mathbb{R}$ with the normalization $\Lambda_j(\psi_j)=1$ and the boundedness $\|\Lambda_j\|\leq C_\Lambda$ of the operator norm $\|\Lambda_j\|:=\sup\{\Lambda_j(f):f\in S(j), \|f\|_{L^\infty(\gamma(j))}=1\}$ of $\Lambda_j$, provided $S(j)$ is endowed with the maximum norm. We suppose that the upper bound    $C_\Lambda$  exclusively depends on $\rho$.   Abbreviate $W:=H^1_0(P)\cap H^2(P)$ and define
\begin{align}
\widehat{W}_h(P)&:=\begin{cases}\begin{rcases}
w\in H^2(P)&: w|_{\partial P}\in C^0(\partial P),\quad\forall{j=1,\dots,J}\quad w|_{\gamma(j)}\in S(j),\;\text{and}\\&\exists f\in\p_r(P)\quad\exists g\in\p_0(\e(P))\quad\forall \phi\in W \\&a^P(w,\phi)=(f, \phi)_{L^2(P)}+(g,\phi_{\bn})_{L^2(\partial P)}\end{rcases},\end{cases}\label{whp}
\\
W_h(P)&:=\begin{cases}\begin{rcases}w\in\widehat{W}_h(P):&\forall j=1,\dots,J\quad \Lambda_j((w-Gw)|_{\gamma(j)})=0\;\text{and}\\& w-Gw\perp\p_r(P)\quad\text{in}\;L^2(P)\end{rcases}.\end{cases}\label{hwhp}
\end{align}
\begin{proposition}The discrete  space  $W_h(P)$ from \eqref{whp}-\eqref{hwhp}  satisfies  \ref{H1}-\ref{H2}.
\end{proposition}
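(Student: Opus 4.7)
The plan is to verify hypotheses \ref{H1} and \ref{H2} by adapting the strategy used for the first example, with the $C^1$-spline structure on each side $\gamma(j)$ of $\partial P$ as the only new ingredient; complete details are deferred to Appendix~C.

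For \ref{H1}, I would first establish $\p_2(P)\subseteq W_h(P)$: any $\chi\in\p_2(P)$ has $\chi|_{\gamma(j)}\in\p_2(\gamma(j))\subset S(j)$ and, by integration by parts against $\phi\in W:=H^1_0(P)\cap H^2(P)$, satisfies the weak equation in \eqref{whp} with $f=\Delta^2\chi=0\in\p_r(P)$ and $g=-\Delta\chi\in\p_0(\e(P))$; since $\pid\chi=\chi$, both orthogonality conditions in \eqref{hwhp} are automatic.

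The core step is unisolvence: for $w\in W_h(P)$ with $\mathrm{Dof}(w)=0$, Lemma~\ref{lem2.2} immediately yields $\pid w\equiv 0$. Vanishing vertex values combined with the $C^1$-spline structure force $w|_{\gamma(j)}=c_j\psi_j$ for some $c_j\in\mathbb{R}$, and the orthogonality $\Lambda_j((w-\pid w)|_{\gamma(j)})=c_j\Lambda_j(\psi_j)=c_j=0$ then gives $w|_{\partial P}=0$. Hence $w\in W$ and testing \eqref{whp} with $\phi=w$ yields $|w|_{2,P}^2=(f,w)_{L^2(P)}+(g,w_{\bn})_{L^2(\partial P)}$, which vanishes because $w=w-\pid w\perp\p_r(P)\ni f$ while $\int_E w_{\bn}\,ds=0$ for every $E\in\e(P)$ pairs with the piecewise-constant $g$. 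Lemma~\ref{lem2.3} then forces $w\equiv 0$, so $\dim W_h(P)=2N_P$ and $(P,W_h(P),\mathrm{Dof})$ is a finite element in the sense of Ciarlet.

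For \ref{H2}, the nodal basis $\psi_k$ has $|\mathrm{Dof}(\psi_k)|_{\ell^2}=1$, so Lemma~\ref{lem2.2} gives $|\pid\psi_k|_{2,P}\leq C_gh_P^{-1}$. For the nonpolynomial part, an energy identity parallel to the unisolvence step (now retaining the associated data $f_k\in\p_r(P)$ and $g_k\in\p_0(\e(P))$, whose $L^2$ norms are bounded via inverse estimates and the uniform bound $\|\Lambda_j\|\leq C_\Lambda$) combined with Lemma~\ref{lem} and Lemma~\ref{lem2.3} produces $|\psi_k-\pid\psi_k|_{2,P}\lesssim h_P^{-1}$. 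Summing squares over $k=1,\dots,2N_P$ and invoking $N_P\leq M(\rho)$ completes \ref{H2}. The principal obstacle is the unisolvence, where the absence of vertex-load terms in \eqref{whp} (in contrast to \eqref{vhp}) forces the argument to route through $c_j=0$ via the orthogonality $\Lambda_j((w-\pid w)|_{\gamma(j)})=0$; the normalization $\Lambda_j(\psi_j)=1$ and the uniform bound $\|\Lambda_j\|\leq C_\Lambda$ are precisely what enables this substitution and drives the uniform stability constant in \ref{H2}.
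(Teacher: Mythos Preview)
Your injectivity argument for \ref{H1} is correct and matches the paper's, but the sentence ``so $\dim W_h(P)=2N_P$'' does not follow: injectivity of $\mathrm{Dof}:W_h(P)\to\mathbb{R}^{2N_P}$ only yields $\dim W_h(P)\leq 2N_P$. Establishing $\dim W_h(P)\geq 2N_P$ --- i.e.\ surjectivity of $\mathrm{Dof}$ --- is the bulk of the work and is \emph{not} a formality. The paper (Appendix~C, Steps~4--8) constructs, for each prescribed vector $x\in\mathbb{R}^{2N_P}$, an element $u_h^P\in W_h(P)$ with $\mathrm{Dof}(u_h^P)=x$ by (i) building boundary data in $S(j)$ via the one-dimensional finite element $(\gamma(j),S(j),(\mathrm{dof}_{k(j)},\dots,\Lambda_j))$, (ii) extending to an HCT function on $\tT(P)$, (iii) projecting onto $W_0^\perp$ via Riesz representation, and (iv) correcting the $L^2$-moments through an auxiliary isomorphism $\mathcal{L}:\p_r(P)\to\p_r(P)$, $f\mapsto\Pi_r u(f)$. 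None of this is visible in your outline, and without it \ref{H1} is incomplete.

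Your sketch for \ref{H2} has a circularity problem. You propose to bound the data $f_k,g_k$ associated with $\psi_k$ ``via inverse estimates'', but these data are defined only implicitly through the variational identity in \eqref{whp}, and any bound on them naturally involves $|\psi_k|_{2,P}$ itself. More fundamentally, the test functions in \eqref{whp} lie in $W=H^1_0(P)\cap H^2(P)$, so you cannot test with $\psi_k$ or $\psi_k-G\psi_k$ directly (neither vanishes on $\partial P$). The paper resolves this by constructing an HCT interpolant $\psi_{\mathrm{HCT}}$ that matches $\psi_k$ on $\partial P$ (so $\psi_k-\psi_{\mathrm{HCT}}\in W_0$ is an admissible test function), proves the independent scaling $|\psi_{\mathrm{HCT}}|_{2,P}\lesssim h_P^{-1}$ using the 1D spline estimates and $\|\Lambda_j\|\leq C_\Lambda$, and then closes the argument with a bubble-function bound $\|f_k\|_{L^2(P)}\lesssim h_P^{-2}|\psi_k|_{2,P}$ that is absorbed into the left-hand side. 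The comparison function with matching boundary trace is the missing ingredient in your plan.
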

\begin{proof} 
	The proof of \ref{H1} for  $\Lambda_j$ from Example~\ref{ex2} below and for $r=-1$ is given in \cite[Lemma~4.1]{zhao2018morley}.  A proof of \ref{H2} with clear dependence of the constant $C_{\text{stab}}$ on the mesh regularity parameter $\rho$ seems missing in the literature.  Appendix~C contains further details for the general case  and the proof of \ref{H2} with explicit constant.
\end{proof} 

 \begin{example}[Example of $\Lambda_j$]\label{ex1}  Given the first edge $E(k(j))$  in $\gamma(j)$,  define $\Lambda_j(v):=\frac{3}{2}\dashint_{E(k(j))}v\,ds$ for $v\in L^1(\gamma(j))$.  Then $\Lambda_j(\psi_j)=1$ and $\|\Lambda_j\|\leq C_\Lambda=3/2$. 
\end{example}
\begin{example}[Comparison with \cite{zhao2018morley}]\label{ex2}
	Zhao \textit{et al.}  consider merely convex polygons  in \cite{zhao2018morley} and we interpret that this implies that the sides are the edges ($\zeta_j=z_j$ for all $j=1,\dots,J$ and $J=N_P$; all interior angles in $P$ are different from $\pi$).  Then their choice  $\Lambda_j(\bullet)=\dashint_{E(j)}\bullet\,ds$ coincides with Example~\ref{ex1} and recovers $V_h(P)$ in  \cite{zhao2018morley}     for certain geometries.
\end{example}

We continue the more  general discussion and point out that, for each $P\in\T$, $V_h(P)$ can even  be selected from either \eqref{vhp}-\eqref{hvhp} or \eqref{whp}-\eqref{hwhp} and a mixture is allowed in the abstract framework at hand. In particular, for different polygons $P\in\T$, the space $V_h(P)$ and the parameter $r$ could be different.

\subsection{Interpolation}
Given $P\in\T\in\mathbb{M}$, recall from \ref{H1}-\ref{H2} the nodal basis $(\psi_1,\dots,\psi_{2N_P})$   of $V_h(P)$ with dof$_j(\psi_k)=\delta_{jk}$ for $j,k=1,\dots,2N_P$.

\begin{definition}[local interpolation operator]\label{l}
  	Define  $I_h^P:H^2(P)\to V_h(P)$ by
	\begin{align}
	I_h^P v =\sum_{j=1}^{2N_P} \text{dof}_j(v)\psi_j\quad\text{for all}\;v\in H^2(P).\label{2.18}
	\end{align}
\end{definition}
\noindent Recall the notation $V_\nc$ from Subsection~2.1 for $\T\in\mathbb{M}$.
\begin{definition}[global discrete space]\label{gd} The (nonconforming) global virtual element space  is  the  collection of all local spaces $V_h(P)$ for $P\in\T$ with well-defined global dofs from \eqref{gdof}, namely
\begin{align*}
V_h:=\begin{rcases}\begin{dcases}v_h\in V_\nc: &\forall \; P\in\T\quad v_h|_P\in V_h(P)\end{dcases}\end{rcases}.
\end{align*}
\end{definition}
\begin{definition}[global interpolation operator]\label{gi}
	 Define the global interpolation operator $I_h:V_\nc\to V_h$ by $(I_hv_\nc)|_P:=I_h^P(v_\nc|_P)$ for all $P\in\T$. (The global interpolation $I_h$ is well-defined  because the dofs from \eqref{dof} are uniquely defined for any $v_\nc\in V_\nc$.)
\end{definition}
\subsection{Interpolation error estimates}
The main results about $I_h$ are summarized as follows.
\begin{theorem}[interpolation]\label{2.10}
	There exists a positive constant $C_{\mathrm{I}}$ (that  exclusively depends on $\rho$) such that any $v\in H^2(P)$ and its interpolation $I_h^Pv\in V_h(P)$ from Definition~\ref{l} satisfy
	\begin{enumerate}[label=$\left(\alph*\right)$]
		\item \label{ip}
		$\displaystyle
		D^2_\pw(v-I_h^Pv)\perp \p_0(P;\mathbb{S})\quad \text{in} \;L^2(P;\mathbb{S}),
		$
		\item $GI_h^Pv=Gv$ and $\displaystyle  \sum_{m=0}^2h_P^{m-2}|v-G I_h^Pv|_{m,P}\leq (1+C_\PF)|v-\pid v|_{2,P},$
		\item $\displaystyle
		|I_h^Pv|_{2,P}\leq C_{\mathrm{Ib}}|v|_{2,P},
		$
		\item \label{25.2}
		$\displaystyle
		\sum_{m=0}^2h_P^{m-2}|v-I_h^Pv|_{m,P}\leq C_\mathrm{I}|v-\pid v|_{2,P}\leq C_{\mathrm{I}}C_{\mathrm{apx}}h_P^s|v|_{2+s,P}$
	\end{enumerate}
	provided $v\in H^{2+s}(P)$ for  $0<s\leq 1$ in the last estimate in $(d)$ with $C_{\mathrm{apx}}$ from Lemma~\ref{lem2.1}.
\end{theorem}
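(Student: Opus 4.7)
The proof proceeds part by part, invoking the preceding lemmas.

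\medskip\noindent\textbf{Part (a).} The plan is to identify $\Pi_0 D^2 w$ with a boundary integral of $\nabla w$ and to express the latter through the Morley dofs. Concretely, for $w:=v-I_h^Pv$ we have $\mathrm{Dof}(w)=0$ by the very definition of $I_h^Pv$. Formula \eqref{s} expresses $\int_{E(k)}\nabla w\,ds$ as an explicit linear combination of $\mathrm{dof}_k(w)$ and $\mathrm{dof}_{N_P+k}(w)$, so each edge integral vanishes and hence $\int_{\partial P}\nabla w\,ds=0$. Identity \eqref{2.5a} then reads $\Pi_0 D^2 w=|P|^{-1}\int_{\partial P}\nabla w\,ds=0$, which is exactly \ref{ip}.

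\medskip\noindent\textbf{Part (b).} The plan is to show that $Gv$ and $GI_h^Pv$ solve the same three defining conditions \eqref{pid1}--\eqref{pid2}. For any $\chi\in\p_2(P)$, $D^2\chi$ is constant, so part (a) yields $a^P(v-I_h^Pv,\chi)=D^2\chi:\int_P D^2(v-I_h^Pv)\,dx=0$, giving $a^P(GI_h^Pv,\chi)=a^P(I_h^Pv,\chi)=a^P(v,\chi)$, i.e. \eqref{pid1} is satisfied with the same right-hand side. The vertex-mean and boundary-flux conditions \eqref{pid2} for $I_h^Pv$ reduce to those for $v$ because $(I_h^Pv)(z_j)=v(z_j)=\mathrm{dof}_j(v)$ and \eqref{s} expresses $\int_{E(k)}\nabla I_h^Pv\,ds$ through the same dofs of $v$. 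Uniqueness of $G$ then forces $GI_h^Pv=Gv$, hence $v-GI_h^Pv=v-Gv$, and the claimed inequality is a direct rewriting of Lemma~\ref{lem2.1}, namely $h_P^{-2}\|v-Gv\|_{L^2(P)}+h_P^{-1}|v-Gv|_{1,P}\le C_{\PF}|v-Gv|_{2,P}$ added to the trivial term $|v-Gv|_{2,P}$.

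\medskip\noindent\textbf{Part (c).} The plan is to split $I_h^Pv$ orthogonally via $G$. Since $\p_2(P)\subseteq V_h(P)$, the nodal interpolant fixes polynomials, so $I_h^P(Gv)=Gv$ and part (b) gives $I_h^Pv-GI_h^Pv=I_h^P(v-Gv)$. Expanding in the nodal basis and applying Cauchy--Schwarz together with hypothesis \ref{H2} and Lemma~\ref{lem} yields
\begin{align*}
|I_h^P(v-Gv)|_{2,P}\le |\mathrm{Dof}(v-Gv)|_{\ell^2}\Big(\sum_{j=1}^{2N_P}|\psi_j|_{2,P}^2\Big)^{1/2}\le C_{\mathrm{stab}}h_P^{-1}\cdot C_d h_P|v-Gv|_{2,P}.
\end{align*}
Pythagoras' identity in the semi-scalar product $a^P$, together with $|Gv|_{2,P}^2+|v-Gv|_{2,P}^2=|v|_{2,P}^2$, then gives $|I_h^Pv|_{2,P}^2\le (1+C_d^2C_{\mathrm{stab}}^2)|v|_{2,P}^2$, i.e. $C_{\mathrm{Ib}}:=(1+C_d^2C_{\mathrm{stab}}^2)^{1/2}$.

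\medskip\noindent\textbf{Part (d).} The plan is to feed Lemma~\ref{lem2.3} with $w:=v-I_h^Pv$, which has vanishing Morley dofs, to obtain $h_P^{-2}\|w\|_{L^2(P)}+h_P^{-1}|w|_{1,P}\le C_{\PF}|w|_{2,P}$. It then suffices to control $|w|_{2,P}$. The triangle inequality and part (c)'s estimate (re-applied to the difference) give
\begin{align*}
|v-I_h^Pv|_{2,P}\le |v-Gv|_{2,P}+|I_h^P(v-Gv)|_{2,P}\le (1+C_dC_{\mathrm{stab}})|v-Gv|_{2,P},
\end{align*}
where the second step uses $I_h^PGv=Gv$ exactly as in part (c). Combining the two displays delivers the first inequality of \ref{25.2} with $C_{\mathrm{I}}:=(1+C_{\PF})(1+C_dC_{\mathrm{stab}})$, and the second inequality is immediate from Lemma~\ref{lem2.1}. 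The only delicate point in the whole argument is the double use of $I_h^PGv=Gv$ (which relies on $\p_2(P)\subseteq V_h(P)$ from \ref{H1}) to couple Lemma~\ref{lem} (a $G$-bound for dofs) with hypothesis \ref{H2} (an $H^2$-bound for $\psi_j$) in parts (c) and (d); the remaining steps are a direct combination of Lemmas~\ref{lem2.1}--\ref{lem2.2}.
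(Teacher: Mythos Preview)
Your proof is correct and follows essentially the same strategy as the paper. The only minor differences are cosmetic: in (c) you close with a Pythagoras identity (yielding $C_{\mathrm{Ib}}=(1+C_d^2C_{\mathrm{stab}}^2)^{1/2}$) where the paper uses a triangle inequality (yielding $C_{\mathrm{Ib}}=1+C_dC_{\mathrm{stab}}$), and in (d) you invoke Lemma~\ref{lem2.3} directly on $w=v-I_h^Pv$ whereas the paper routes through part~(b) to first conclude $Gw=0$ and then applies Lemma~\ref{lem2.1}; both routes amount to the same Poincar\'e--Friedrichs step.
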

\begin{proof}[Proof of $(a)$]
	Since   $v$ and $I_h^Pv$ coincide at the  vertices from Definition~\ref{l} of $I_h^P$, their tangential derivatives satisfy
	\begin{align}
	\int_E (I_h^Pv)_{\bt}\,ds =I_h^Pv(z_2)-I_h^Pv(z_1) =v(z_2)-v(z_1)=\int_E v_{\bt}\,ds\label{t}
	\end{align}
	for the vertices $z_1,z_2$ of an edge $E$  directed from $z_1$ to $z_2=z_1+h_E\bt_E$. An integration by parts proves the first step and the split $\nabla v=v_{\bn}\bn+v_{\bt}\bt$ proves the second step in  \begin{align}\int_P D^2v\,dx&=\sum_{E\in\e(P)}\left(\int_E \nabla v\,ds\right)\otimes \bn_P|_E =\sum_{E\in\e(P)}\left(\int_E (v_{\bn}\bn+v_{\bt}\bt)\,ds\right)\otimes\bn_P|_E.\label{2.15}\end{align}
	The combination of \eqref{t}-\eqref{2.15}   and $\int_E (I_h^Pv)_\bn\,ds=\int_E(v)_\bn\,ds$ from  Definition~\ref{l} shows 
	\begin{align}
	\int_P D^2v\,dx=\sum_{E\in\e(P)}\left(\int_E ((I_h^Pv)_{\bn}\bn+(I_h^Pv)_{\bt}\bt)\,ds\right)\otimes\bn_P|_E =\int_PD^2I_h^Pv\,dx\label{2.27}
	\end{align}
	with the split for $\nabla I_h^Pv=(I_h^Pv)_{\bn}\bn+(I_h^Pv)_{\bt}\bt$ and an integration by parts in the last step. 
\end{proof}
\begin{proof}[Proof of $(b)$]
	Since the dofs of $I_h^Pv$ and $v$ coincide by Definition~\ref{l} and $G$ is uniquely determined by  Lemma~\ref{lem2.2}, $GI_h^Pv=Gv$ for  $v\in H^2(P)$. Hence Lemma~\ref{lem2.1} concludes the proof.
\end{proof}
\begin{proof}[Proof of $(c)$]
	Since $I_h^Pv-Gv\in V_h(P)$ for any $v\in H^2(P)$, the nodal basis functions $\psi_j\in V_h(P)$ for $j=1,\dots,2N_P$ and  the dofs from \eqref{dof} with $\text{Dof}(I_h^Pv)=\text{Dof}(v)$ lead to
	\begin{align*}
	I_h^Pv-Gv=\sum_{j=1}^{2N_P}\text{dof}_j(I_h^Pv-Gv)\psi_j=\sum_{j=1}^{2N_P}\text{dof}_j(v-Gv)\psi_j.
	\end{align*}
	 The Cauchy-Schwarz inequality 
	$
	|I_h^Pv-Gv|_{2,P}\leq h_P^{-1}|\text{Dof}(v-Gv)|_{\ell^2}h_P\Big(\sum_{j=1}^{2N_P}|\psi_j|_{2,P}^2\Big)^{1/2},
	$
	 Lemma \ref{lem2.1}-\ref{lem}, and \ref{H2} result in 
	\begin{align}
	|I_h^Pv-Gv|_{2,P}\leq C_dC_{\text{stab}}|v|_{2,P}.\label{2}
	\end{align} 
	Since $|Gv|_{2,P}\leq |v|_{2,P}$ (from \eqref{pid1}), \eqref{2} and the triangle inequality  $|I_h^Pv|_{2,P}\leq |I_h^Pv-Gv|_{2,P}+|Gv|_{2,P}$ conclude the proof with $C_{\text{Ib}}:=1+C_dC_{\text{stab}}$.
\end{proof}
\begin{proof}[Proof of $(d)$]
	Substitute $w:=v-I_h^Pv\in H^2(P)$ for $v$ in $(b)$ and observe $I_h^Pw=0$, so $GI_h^Pw=0,$ to derive 
	\begin{align}
	(1+C_\PF)^{-1}\sum_{m=0}^2h_P^{m-2}|w|_{m,P}\leq |w-\pid w|_{2,P}\leq |v-\pid v|_{2,P}+|I_h^Pv-\pid I_h^Pv|_{2,P}\label{3.10}
	\end{align}
	with a triangle inequality in the last step.  Let $g:=v-GI_h^Pv\in H^2(P)$ with $I_h^Pg=I_h^Pv-\pid I_h^Pv$ from $I_h^P(GI_h^Pv)=GI_h^Pv$. Then $(b)$-$(c)$ result in $|I_h^Pg|_{2,P}\leq C_{\text{Ib}}|g|_{2,P}\leq C_{\text{Ib}}|v-\pid v|_{2,P}$. This and \eqref{3.10} conclude the proof of $(d)$ with $C_{\text{I}}:=(1+C_\PF)(1+C_{\text{Ib}})$.
\end{proof}
\section{Conforming companion}
Recall  that  $\tT$ is a shape-regular sub-triangulation of the polygonal mesh $\T\in\mathbb{M}$ of the domain $\Omega$  from  Subsection~2.1 with the set of edges $\eE\supset\e$. 
\subsection{Morley interpolation}
The Morley finite element space $\M(\tT)$ is defined  as
\begin{align*}
\M(\tT):=\begin{rcases}\begin{dcases}
v_\M\in\p_2(\tT):&v_\M\; \text{is continuous at  interior vertices  and zero at boundary}\\& \text{vertices of $\tT$, and $(v_\M)_\bn$ is continuous at  midpoints of interior}\\&\text{edges and zero at midpoints of boundary edges of $\eE$}
\end{dcases}
\end{rcases}.
\end{align*}
The Morley interpolation operator $I_\M:V_h\to \M(\tT)$ is (uniquely) defined by
\begin{align}
I_\M v_h(z) = v_h(z)\quad\text{at}\;z\in\widehat{\mathcal{V}}\;\text{and}\;
\int_E(I_\M v_h)_{\bn}\,ds = \int_E (v_h)_\bn\,ds\quad\text{on}\;E\in\eE.\label{m}
\end{align}
A well-known consequence of \eqref{m} \cite[Lemma~3.1]{carstensen2021lower} (follows as the proof of Theorem~\ref{2.10}.a)  is
\begin{align}
D^2_{\pw}(v_h-I_\M v_h)\perp\p_0(\tT;\mathbb{S})\quad\text{in}\; L^2(\Omega;\mathbb{S}).\label{m1}
\end{align}
The interpolation error estimate from \cite[Theorem~3]{carstensen2014guaranteed} (for  functions in $H^2(T)$) implies
\begin{align}
\frac{1}{2}\sum_{m=0}^2|h_{\tT}^{m-2}(v_h-I_\M v_h)|_{m,\pw}\leq |v_h-Gv_h|_{2,\pw}.\label{2.20}
\end{align}
\begin{lemma}[conforming companion for Morley \cite{gallistl2015morley}]\label{J'}
	There exists a linear map $J':\M(\tT)\to H^2_0(\Omega)$ and a  constant $C_{J'}$ (that exclusively depends on $\rho$) such that   any $v_\M\in \M(\tT)$ satisfies \\
		$(a)$ $ J'v_\M(z) = v_\M(z)$ at all vertices $z\in\widehat{\mathcal{V}}$,
		$(b)$ $ \dashint_E (J'v_\M)_{\bn}\,ds = \dashint_E (v_\M)_{\bn}\,ds$ on all edges $E\in{\eE}$,\\
		$(c)$ $ \displaystyle D^2_{\pw}(v_\M-J'v_\M)\perp \p_0(\tT;\mathbb{S})$ in $L^2(\Omega;\mathbb{S})$,\\
		$(d)$  $\displaystyle \sum_{m=0}^2|h_{\tT}^{m-2}(v_\M-J'v_\M)|_{m,\pw}\leq C_{J'}\min_{v\in V}|v_\M-v|_{2,\pw}$.\qed
\end{lemma}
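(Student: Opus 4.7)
The plan follows the Hsieh--Clough--Tocher (HCT) macro-element construction of Gallistl \cite{gallistl2015morley}: build a preliminary HCT lift $J_0:\M(\tT)\to H^2_0(\Omega)$ that enforces (a) and (b) directly from the Morley degrees of freedom, correct it by $H^2_0(T)$-bubbles to enforce (c), and deduce (d) from a patchwise scaling estimate combined with trace inequalities that bound Morley jumps by the piecewise distance of $v_\M$ to $V$. The key compatibility is that the HCT local degrees of freedom contain exactly the Morley ones plus nodal gradients that must be invented by averaging.

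To construct $J_0$, I would use the full HCT space $\h(\tT)\subset H^2_0(\Omega)$ whose degrees of freedom on each $T\in\tT$ are the nodal values and gradients at the three vertices together with the edge-averaged normal derivatives on the three edges. Given $v_\M\in\M(\tT)$, I would prescribe $J_0v_\M(z):=v_\M(z)$ at every $z\in\widehat{\mathcal V}$; $\nabla J_0v_\M(z)$ equal to the arithmetic mean of the one-sided gradients $(\nabla v_\M|_T)(z)$ over triangles $T\in\tT$ containing $z$ at interior vertices, and zero at boundary vertices; and $\dashint_E(J_0v_\M)_\bn\,ds:=\dashint_E(v_\M)_\bn\,ds$ on each $E\in\eE$ (zero on boundary edges). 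Properties (a), (b) then hold by construction, and $J_0v_\M\in H^2_0(\Omega)$ follows because a Morley function vanishes at boundary vertices and carries zero edge-integrated normal derivatives on boundary edges. To obtain (c), I would add on each $T\in\tT$ an $H^2_0(T)$-bubble $B_T$ whose HCT degrees of freedom all vanish (so (a), (b) are preserved) and whose Hessian mean matches $\int_T D^2(v_\M-J_0v_\M)\,dx$; by standard scaling, $|B_T|_{2,T}\lesssim|v_\M-J_0v_\M|_{2,T}$, so $J':=J_0+\sum_{T\in\tT}B_T$ satisfies (a), (b), (c) and remains $H^2$-bounded.

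The quasi-best bound (d) reduces to the piecewise $H^2$-estimate $|v_\M-J'v_\M|_{2,\pw}\lesssim\min_{v\in V}|v_\M-v|_{2,\pw}$; the lower-order norms $m=0,1$ then follow from the Poincar\'e--Friedrichs inequality on each $T\in\tT$ applied to $v_\M-J'v_\M$, which has vanishing vertex values, vanishing edge-integrated normal derivatives, and vanishing Hessian mean on $T$ by (a), (b), (c). For the piecewise $H^2$-estimate I would first prove, by a scaling argument on the HCT reference macro element, the patchwise bound
\[|v_\M-J'v_\M|_{2,T}^2\lesssim\sum_{E\in\eE(\omega_T)}\bigl(h_E^{-1}\|[\nabla v_\M]_E\|_{L^2(E)}^2+h_E^{-3}\|[v_\M]_E\|_{L^2(E)}^2\bigr)\]
on a patch $\omega_T$ around $T$, where the right-hand side quantifies the defects of the vertex-gradient and edge-normal averaging in $J_0$. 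Since $[v_\M-v]_E=[v_\M]_E$ and $[\nabla(v_\M-v)]_E=[\nabla v_\M]_E$ for every $v\in V$, standard trace inequalities on $\omega_T$ control the jump seminorm by $|v_\M-v|_{2,\omega_T}$; summing over $T$ and minimizing over $v\in V$ yields (d).

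The main obstacle is the patchwise jump bound in the last step, specifically the quantification of the vertex-gradient averaging defect $(\nabla v_\M|_T)(z)-\nabla J_0v_\M(z)$ at an interior vertex $z$ in terms of jumps of $\nabla v_\M$ across the edges emanating from $z$. This needs a telescoping identity around the vertex patch together with a careful reference-element computation on HCT; once this local estimate is established, the bubble correction, the conformity, and the Poincar\'e--Friedrichs inequality for the lower-order norms are routine consequences.
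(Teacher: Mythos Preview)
The paper supplies no proof of this lemma; it is quoted from \cite{gallistl2015morley} and closed with a \qed. Your sketch follows that HCT-averaging construction faithfully and is correct in outline. One simplification you overlooked: once your preliminary lift $J_0$ satisfies (a) and (b), property (c) is automatic. Indeed, the integration-by-parts identity
\[
\int_T D^2 w\,dx=\sum_{E\subset\partial T}\Bigl(\int_E\nabla w\,ds\Bigr)\otimes\bn_E
=\sum_{E\subset\partial T}\Bigl(\Bigl(\int_E w_\bn\,ds\Bigr)\bn_E+(w(b_E)-w(a_E))\,\bt_E\Bigr)\otimes\bn_E
\]
applied to $w=v_\M-J_0v_\M$ on each $T\in\tT$ vanishes term by term from (a) and (b); this is precisely the mechanism behind Theorem~\ref{2.10}.a and \eqref{m1}. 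Your $H^2_0(T)$-bubble correction is therefore superfluous and $J'=J_0$ already satisfies (a)--(c). The remainder of your argument for (d)---the patchwise HCT scaling bound in terms of gradient and value jumps, the telescoping control of the vertex-gradient averaging defect, the trace-inequality reduction to $|v_\M-v|_{2,\pw}$, and the Poincar\'e--Friedrichs step for $m=0,1$---is the standard route and is sound.
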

\subsection{Computable Morley interpolation}
The virtual element function $v_h\in V_h$ is given implicitly  such that the computation of $I_\M v_h$ is possible in principle, but too costly. The aim in this section is the analysis of a  function $v_\M\in \M(\tT)$ that is computable in terms of dofs of $v_h\in V_h$. Given any $v_h\in V_h$ set
\begin{align}
v_\M(z) :=\begin{cases}v_h(z)\;\text{at}\;z\in\mathcal{V},\\
Gv_h(z)\;\text{at}\;z\in\widehat{\mathcal{V}}\setminus \mathcal{V}\end{cases}\;\text{and}\quad
\int_E( v_\M)_\bn\,ds:=\begin{cases}\int_E(v_h)_\bn\,ds\;\text{on}\;E\in\e,\\
\int_E(Gv_h)_\bn\,ds\;\text{on}\;E\in\eE\setminus\e. \end{cases}\label{newm}
\end{align}
Notice that $v_\M\in\M(\tT)$ is well-defined and computable in terms of the dofs of $v_h\in V_h$ (for $z\in\mathcal{V}, E\in\e$) and  from $Gv_h\in \p_2(\tT)$ with Lemma~\ref{lem2.2} (for $z\in\widehat{\mathcal{V}}\setminus\mathcal{V}, E\in\eE\setminus\e$).  The following estimates control the difference between  $I_\M v_h$ and $v_\M$ in  $\M(\tT)$ for  $v_h\in V_h$.
\begin{lemma}[key]\label{M}
	There exists  a positive constant $C_\M$ (that exclusively depends on $\rho$) with
	\begin{align*}
	\sum_{m=0}^{2}|h_\tT^{m-2}(I_\M v_h-v_\M)|_{m,\pw}\leq C_\M|v_h-Gv_h|_{2,\pw}\quad\text{for any}\; v_h\in V_h\;\text{and}\;v_\M\in \M(\tT)\;\text{with}\; \eqref{newm}.
	\end{align*}
\end{lemma}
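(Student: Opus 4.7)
The strategy is to compare the defining conditions of $I_\M v_h$ in \eqref{m} with those of $v_\M$ in \eqref{newm} to identify which Morley degrees of freedom of $w_h:=I_\M v_h-v_\M\in\M(\tT)$ can fail to vanish, to expand $w_h$ in the Morley nodal basis on each sub-triangle, and to control every surviving dof by the a priori estimates on $v_h-Gv_h\in V_h(P)$ already collected in Subsection~2.4.

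A direct comparison of \eqref{m} and \eqref{newm} shows that $w_h(z)=0$ for every $z\in\mathcal{V}$ and $\int_E(w_h)_\bn\,ds=0$ for every $E\in\e$, while at a sub-vertex $z_0\in\widehat{\mathcal{V}}\setminus\mathcal{V}$ inside some $P\in\T$ one has $w_h(z_0)=(v_h-Gv_h)(z_0)$, and on an interior sub-edge $E\in\eE\setminus\e$ one has $\int_E(w_h)_\bn\,ds=\int_E(v_h-Gv_h)_\bn\,ds$. Only dofs lying strictly inside polygons can therefore contribute to $w_h$.

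On each sub-triangle $T\in\tT(P)$ I would expand $w_h|_T\in\mathcal{P}_2(T)$ in the Morley nodal basis $\{\phi_z:z\in\mathcal{V}(T)\}\cup\{\phi_E:E\in\e(T)\}$ dual to the six local Morley dofs. A standard affine transformation to a fixed reference triangle combined with the uniform shape-regularity of $\tT$ produces the basis scaling $|\phi_z|_{m,T}+|\phi_E|_{m,T}\lesssim h_T^{1-m}$ for $m=0,1,2$. A Cauchy--Schwarz inequality in the resulting basis expansion then yields
\begin{align*}
h_T^{m-2}|w_h|_{m,T}\lesssim h_T^{-1}\Bigl(\sum_{z_0\in\mathcal{V}(T)\setminus\mathcal{V}}|(v_h-Gv_h)(z_0)|^2+\sum_{E\in\e(T)\setminus\e}\Bigl|\int_E(v_h-Gv_h)_\bn\,ds\Bigr|^2\Bigr)^{1/2}.
\end{align*}

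It remains to bound each surviving dof by a multiple of $h_P|v_h-Gv_h|_{2,P}$. For the pointwise term, the scaled Sobolev inequality \eqref{2.16} together with Lemma~\ref{lem2.1} gives $|(v_h-Gv_h)(z_0)|\leq\|v_h-Gv_h\|_{L^\infty(P)}\lesssim h_P|v_h-Gv_h|_{2,P}$; for an interior sub-edge $E\subset\partial T\subset P$, the trace bound from \eqref{2.17} adapted to $T$ (instead of $P$) and Lemma~\ref{lem2.1} give $|\int_E(v_h-Gv_h)_\bn\,ds|\lesssim|v_h-Gv_h|_{1,T}+h_T|v_h-Gv_h|_{2,T}\lesssim h_P|v_h-Gv_h|_{2,P}$. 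Inserting these bounds and using $h_T\geq\rho h_P$ together with the bounded count $N_P\leq M(\rho)$ of sub-triangles and dofs per $T$ yields $h_T^{m-2}|w_h|_{m,T}\lesssim|v_h-Gv_h|_{2,P}$; summation over $T\in\tT(P)$ and then over $P\in\T$ produces the claim with a constant $C_\M$ that depends exclusively on $\rho$. I expect the main obstacle to be the sharp scaling of the Morley nodal basis (via the reference-element argument) and the careful bookkeeping of which dofs of $w_h$ actually survive; both issues are local and reduce to the comparison of \eqref{m} with \eqref{newm} together with the previously established Lemmas~\ref{lem2.1}--\ref{lem}.
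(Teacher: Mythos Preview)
Your proposal is correct and follows essentially the same route as the paper: identify the surviving Morley dofs of $I_\M v_h-v_\M$ from the comparison of \eqref{m} and \eqref{newm}, expand in the Morley nodal basis, use the standard basis scaling, and bound the nonzero dofs (the point value at $z_0$ and the normal-derivative integrals on the interior sub-edges) by $h_P|v_h-Gv_h|_{2,P}$ via the Sobolev/trace inequalities and Lemma~\ref{lem2.1}. The only cosmetic differences are that the paper expands on the polygon level with integral means $\dashint_E(\cdot)_\bn\,ds$ and the corresponding scaling $|\psi_E|_{m,T}\approx h_T^{2-m}$ (rather than your $\int_E$ with $|\phi_E|_{m,T}\lesssim h_T^{1-m}$, which differ by a factor $h_E$), and that it appeals directly to \eqref{2.15a} instead of re-deriving the trace bound on $T$.
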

\begin{proof}
	Recall the sub-triangulation $\tT(P)$ of $P$ with  mid-point $z_0$ and the set of edges $\eE(P)$ (resp. $\e(P)$)  in $\tT(P)$ (resp. $P$) from Subsection~2.1 for a polygonal domain $P\in\T$. Let $\psi_{z}$ and  $\psi_E$ be the nodal basis functions of $\M(\tT)$ for $z\in\widehat{\mathcal{V}}$ and $E\in\eE$, and write  $I_\M v_h- v_\M\in \M(\tT)$ as
	\begin{align}
	(I_\M v_h- v_\M)|_P = (v_h-G v_h)(z_0)\psi_{z_0}+\sum_{E\in\eE(P)\setminus\e (P)}\left(\dashint_E(v_h-Gv_h)_\bn\,ds\right)\psi_E.\label{5.7}
	\end{align}
	The  basis functions $\psi_z$ and $\psi_E$ are written explicitly, e.g., in  \cite[Sec.~6]{carstensen2014discrete} and scale like
	\begin{align}
	|\psi_{z_0}|_{m,T}\approx h_T^{1-m}\quad\text{and}\quad|\psi_E|_{m,T}\approx h_T^{2-m}\quad\text{for}\;m=0,1,2\label{5.10}
	\end{align}
	in a triangle $T\in\tT(P)$ with $E\subset\partial T$. Note that the integral means of normal derivatives of $\psi_E$ over edges enter \eqref{5.7} and \textit{not} the dofs for the ncVEM  from \eqref{dof}. Lemma~\ref{lem} with integral means along edges $E\in\eE(P)\setminus\e(P)$ in \eqref{2.15a} and
	 $h_{P}\leq \rho^{-1}h_E$  from (M2) imply that
	\begin{align}
	h_P^{-1}|(v_h-Gv_h)(z_0)|+\Big|\dashint_{E}(v_h-Gv_h)_\bn\,ds\Big|\lesssim |v_h-Gv_h|_{2,P}.\label{3.5n}
	\end{align}   The combination \eqref{5.7}-\eqref{3.5n}  concludes the proof.
\end{proof}
\subsection{Computable companion for VEM}
The new tool in this paper is the conforming companion operator $J$ established in Theorem~\ref{J}. The estimate in part (c) of which is sharp in the sense that the reverse inequality also holds (with a multiplicative constant $2$ from a triangle inequality).
\begin{theorem}[companion operator for VEM]\label{J}
		There exist a linear map $J:V_h\to H^2_0(\Omega)$, which is a right-inverse to the interpolation operator $I_h$, and a universal constant  $C_{J}$ (that exclusively depends on $\rho$) such that any $v_h\in V_h$ satisfies  $(a)$-$(c)$.\\
		$(a)$  $ D^2_{\pw}(v_h-Jv_h)\perp \p_0(\T;\mathbb{S}) \quad\text{in}\;L^2(\Omega;\mathbb{S}),\qquad$
		$(b)$ $Gv_h-Jv_h\perp\p_2(\T)\quad\text{in}\;L^2(\Omega)$,\\
		$(c)$  $\displaystyle \sum_{m=0}^2(|h_\T^{m-2}(Gv_h-Jv_h)|_{m,\pw}+|h_\T^{m-2}(v_h-Jv_h)|_{m,\pw}) \leq C_{J}\big(|v_h-Gv_h|_{2,\pw}+\min_{v\in V}|v_h-v|_{2,\pw}\big).$
\end{theorem}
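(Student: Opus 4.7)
The plan is to construct $Jv_h := J'v_\M + \sum_{P\in\T} c_P$ in two layers: the global piece $J'v_\M \in H^2_0(\Omega)$ handles $(a)$ and the right-inverse property $I_hJ = \mathrm{id}$, while a bubble correction $c_P \in H^2_0(P)$ supported in each polygon $P$ enforces $(b)$. Here $v_\M \in \M(\tT)$ is the computable Morley surrogate from \eqref{newm} and $J'$ is the Morley companion of Lemma~\ref{J'}. Extended by zero to $\Omega$, each $c_P$ remains in $H^2_0(\Omega)$ because both its trace and its normal trace vanish on $\partial P$, so indeed $Jv_h \in V$.

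The first key observation is that by \eqref{newm}, $v_\M$ already matches $v_h$ at every vertex of $\T$ and has the correct edge integrals $\int_E(v_\M)_\bn\,ds = \int_E(v_h)_\bn\,ds$ on all $E\in\e$. Lemma~\ref{J'}$(a)$-$(b)$ preserves these identities in $J'v_\M$, and each bubble $c_P \in H^2_0(P)$ has vanishing VEM dofs; hence $\text{Dof}(Jv_h|_P) = \text{Dof}(v_h|_P)$ on every $P\in\T$. Nodality of the VEM basis then gives $I_h Jv_h = v_h$. Formula \eqref{2.27}, which expresses $\int_P D^2 w\,dx$ exclusively in terms of the VEM boundary dofs of $w$, yields $\int_P D^2 Jv_h\,dx = \int_P D^2 v_h\,dx$, i.e.\ $(a)$.

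For $(b)$, fix any sub-triangle $T_P \in \tT(P)$, the $H^2_0(T_P)$ bubble $b_{T_P}:=(\lambda_1\lambda_2\lambda_3)^2$ extended by zero to $P$, and a basis $(q_1,\dots,q_6)$ of $\p_2(P)$ with $\|q_j\|_{L^\infty(P)}\lesssim 1$. Set $c_P:=\sum_{j=1}^6\alpha_j b_{T_P}q_j$ with $\alpha\in\mathbb{R}^6$ solving
\[
\sum_{j=1}^6\alpha_j\int_{T_P}b_{T_P}q_iq_j\,dx=\int_P(Gv_h-J'v_\M)\,q_i\,dx\quad(i=1,\dots,6).
\]
The Gram matrix is symmetric positive definite because the restriction $\p_2(P)\to\p_2(T_P)$ is bijective and $b_{T_P}>0$ inside $T_P$; its smallest eigenvalue scales like $h_P^2$ by shape-regularity of $T_P$. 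An elementary scaling argument then yields $|c_P|_{m,P}\lesssim h_P^{-m}\|Gv_h-J'v_\M\|_{L^2(P)}$ for $m=0,1,2$, and by design $Gv_h - Jv_h\perp\p_2(P)$ in $L^2(P)$.

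The estimate $(c)$ follows by combining triangle inequalities with the building blocks already available. The chain $|v_h-J'v_\M|_{m,\pw}\le|v_h-I_\M v_h|_{m,\pw}+|I_\M v_h-v_\M|_{m,\pw}+|v_\M-J'v_\M|_{m,\pw}$ is bounded by $\ms^{2-m}(|v_h-Gv_h|_{2,\pw}+\min_{v\in V}|v_h-v|_{2,\pw})$ via \eqref{2.20}, Lemma~\ref{M}, Lemma~\ref{J'}$(d)$, and the auxiliary inequality $\min_{v\in V}|v_\M-v|_{2,\pw}\le|v_h-v_\M|_{2,\pw}+\min_{v\in V}|v_h-v|_{2,\pw}$. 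The same chain (with an extra factor $h_P^2$ coming from Lemma~\ref{lem2.1} for $\|v_h-Gv_h\|_{L^2(P)}$) controls $\|Gv_h-J'v_\M\|_{L^2(P)}$ and therefore, through the bubble bound, $\sum_m h_P^{m-2}|c_P|_{m,P}$. Adding $|Gv_h-v_h|_{m,P}$ (Lemma~\ref{lem2.1} once more) transfers the bound from $v_h-Jv_h$ to $Gv_h-Jv_h$. The main obstacle is the bubble layer: checking that the $6\times 6$ system is $\rho$-uniformly well-conditioned on an arbitrary sub-triangle and tracking $h_P$ through Cramer's rule so that the sharp $h_P^{-m}$ scaling emerges; once that is in place, the rest is routine assembly.
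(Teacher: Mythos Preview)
Your construction is essentially the paper's: $Jv_h := J'v_\M + (\text{bubble correction in }H^2_0(P))$ for the computable Morley surrogate $v_\M$ of \eqref{newm}, with dof-preservation giving $(a)$ and $I_hJ=\mathrm{id}$, the bubble system giving $(b)$, and the chain \eqref{2.20}$+$Lemma~\ref{M}$+$Lemma~\ref{J'}$(d)$ giving $(c)$. The only cosmetic differences are that the paper's bubble $b_P$ is supported on \emph{all} sub-triangles of $P$ rather than one (which slightly simplifies the conditioning argument you flag as the main obstacle), and the paper shortcuts the $m=0,1$ cases of $(c)$ to $m=2$ via Theorem~\ref{PF}.b (applicable since $Gv_h-Jv_h$ has zero vertex average and zero boundary gradient integral by \eqref{pid2} and \eqref{a0}--\eqref{a1}); one small fix: your global factor $\ms^{2-m}$ should be the local weight $h_\T^{m-2}$, which the cited building blocks already deliver directly.
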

\begin{proof}
	\noindent\textit{Construction of $J$}.
	For a given $v_h\in V_h$, a composite function $J' v_\M$ belongs to $V=H^2_0(\Omega)$ and we need a modification to achieve $(b)$. For each $T\in\tT(P)$ from Subsection~2.1,   the cubic bubble-function $b_T:=27\lambda_1\lambda_2\lambda_3\in H_0^{1}(T)$ for the barycentric co-ordinates $\lambda_1, \lambda_2, \lambda_3\in\p_1(T)$ of $T$, leads to $b_T^2\in H^2_0(T)$ with $0\leq b_T^2\leq 1$ and $\dashint_T b_T^2\,dx=81/280$. Hence $\dashint_{P}b_P\,dx=1$ follows for 
	\begin{align}
	b_P:=\frac{280}{81}\sum_{T\in\tT(P)}b_T^2\in H_0^{2}(P)\subset H^2_0(\Omega)\quad\text{for}\;P\in\T.\label{bubble}
	\end{align} 
	 Let $v_P\in\p_2(P)$  be the Riesz representation of the linear functional $\p_2(P)\to\mathbb{R}$,
	$
	w_2\mapsto (Gv_h-J'v_\M,w_2)_{L^2(P)}
	$
	in the Hilbert space $\p_2(P)$ endowed with the weighted scalar product $(b_P\bullet,\bullet)_{L^2(P)}$. In other words, given $v_h\in V_h$, $v_\T\in\p_2(\T)$ with $v_\T|_P:=v_P$ satisfies
	\begin{align}(b_Pv_\T,w_2)_{L^2(\Omega)}=(Gv_h-J' v_\M,w_2)_{L^2(\Omega)}\quad\text{for all}\;w_2\in\p_2(\T);\label{*}\end{align} whence  $\Pi_2(Gv_h-J' v_\M) = \Pi_2(b_Pv_P)$. Given $v_h\in V_h$ with $v_\T\in\p_2(\T)$ define
	\begin{align}
	Jv_h:= J' v_\M +\sum_{P\in\T} b_Pv_\T \in V.\label{J1} 
	\end{align}  


\begin{proof}[Proof of $(a)$]
	 The definitions of $J$ and $b_P\in H^2_0(P)$ imply for any vertex $z\in\mathcal{V}$ and for any edge $E\in\e$ that $Jv_h(z) = J'v_\M(z)$ and  $\dashint_E (Jv_h)_{\bn}\,ds = \dashint_E(J' v_\M)_{\bn}\,ds.$ This,  Lemma~\ref{J'}.a-b as $\mathcal{V}\subset\widehat{\mathcal{V}}$ and $\e\subset\eE$, and \eqref{newm}  lead to
	\begin{align}
	Jv_h(z)& = J'v_\M(z)=v_\M(z)=v_h(z),\label{a0}\\
	\dashint_E (Jv_h)_{\bn}\,ds&=\dashint_E (J' v_\M)_{\bn}\,ds=\dashint_E ( v_\M)_{\bn}\,ds=\dashint_E (v_h)_{\bn}\,ds. \label{a1}
	\end{align}
	The proof of $(a)$ follows from  \eqref{a0}-\eqref{a1} as in the proof of Theorem~\ref{2.10}.a.
\end{proof}
\begin{proof}[Proof of $(b)$]
	The definition of $J$ in \eqref{J1} directly shows $\Pi_2(Gv_h)=\Pi_2(Jv_h)$. 
\end{proof}
\begin{proof}[Proof of $(c)$]
	Abbreviate $\overline{v}:=N_P^{-1}\sum_{j=1}^{N_P}v(z_j)$ for any $v\in H^2(P)$. The definition of $G$ in \eqref{pid2} and \eqref{a0} imply $\overline{Gv_h-Jv_h}=\overline{Gv_h-v_h}+\overline{v_h-Jv_h}=0$. The split $\nabla Jv_h:=(Jv_h)_{\bn} \bn+(Jv_h)_{\bt}\bt$ and an integration of the tangential component show for $k=1,\dots,N_P$ that \begin{align}\int_{E(k)}\nabla Jv_h\,ds&=\int_{E(k)} (Jv_h)_{\bn} \bn_{E(k)}\,ds+(Jv_h(z_{k+1})-Jv_h(z_k)))\bt_{E(k)}\nonumber\\&=\int_{E(k)} (v_h)_{\bn} \bn_{E(k)}\,ds+(v_h(z_{k+1})-v_h(z_k)))\bt_{E(k)}=\int_{E(k)}\nabla v_h\,ds\label{jv}\end{align}
	 with \eqref{a0}-\eqref{a1} in the second step and $\nabla v_h:=(v_h)_{\bn} \bn+(v_h)_{\bt}\bt$ in the last step. 
	 This and \eqref{pid2} lead to $\int_{\partial P}\nabla(Gv_h-Jv_h)\,ds=\int_{\partial P}\nabla(Gv_h-v_h)\,ds+\int_{\partial P}\nabla(v_h-Jv_h)\,ds=0$. Hence the Poincar\'e-Friedrich inequality in Theorem~\ref{PF}.b implies \begin{align}\sum_{m=0}^2|h_\T^{m-2}(Gv_h-Jv_h)|_{m,\pw}\leq (1+C_\PF)|Gv_h-Jv_h|_{2,\pw}.\label{4.9}\end{align}
	 It remains to control $|Gv_h-Jv_h|_{2,\pw}$. 	There exists  a positive constant $C_b$ in the inverse estimates 
	 \begin{align}
	 C_b^{-1}\|\chi\|^2_{L^2(P)}\leq &(b_P,\chi^2)_{L^2(P)}\leq C_b\|\chi\|^2_{L^2(P)},\label{b1}\\
	 C_b^{-1}\|\chi\|_{L^2(P)}\leq&\sum_{m=0}^2 h_P^m|b_P\chi|_{m,P}\leq C_b\|\chi\|_{L^2(P)}\quad \text{for any}\; \chi\in \p_2(P).\label{b2}
	 \end{align}
	 The inequalities \eqref{b1}-\eqref{b2} are standard inverse estimates for a (shape-regular) triangle (replace $P$ by $T\in\tT(P)$ therein) and \eqref{bubble} reveals \eqref{b1}-\eqref{b2} as a sum of those.  Hence $C_b$ depends exclusively on $\rho$. The definition of $J$ in \eqref{J1} shows $|J'v_\M-Jv_h|_{2,P}=|b_Pv_P|_{2,P}$ and the inverse inequality \eqref{b2} proves $ |b_Pv_P|_{2,P}	\leq C_bh_P^{-2}\|v_P\|_{L^2(P)}.$ The first inequality in \eqref{b1} and the definition of $v_P\in\p_2(P)$ in \eqref{*} lead to
	 \begin{align*}
	 C_b^{-1}\|v_P\|^2_{L^2(P)}\leq (b_Pv_P,v_P)_{L^2(P)}=(Gv_h-J' v_\M,v_P)_{L^2(P)}\leq\|Gv_h-J' v_\M\|_{L^2(P)}\|v_P\|_{L^2(P)}
	 \end{align*}
	 with  a Cauchy-Schwarz inequality in the last step. Hence $\|v_P\|_{L^2(P)}\leq C_b\|Gv_h-J' v_\M\|_{L^2(P)}$ and the combination  with the above   estimates verifies \[|J'v_\M-Jv_h|_{2,P}=|b_Pv_P|_{2,P}\leq C_b^2h_P^{-2}\|(Gv_h-J' v_\M)\|_{L^2(P)}.\] Triangle inequalities and Lemma~\ref{J'}.d lead, for any $v\in V$, to
	 \begin{align}
	 C_b^{-2}|J'v_\M-Jv_h|_{2,\pw}
	 &\leq
	 \|h_\T^{-2}(Gv_h-I_\M v_h)\|_{L^2(\Omega)}+\|h_\T^{-2}(I_\M v_h- v_\M)\|_{L^2(\Omega)}\nonumber\\&\quad+C_{J'}(|v_\M-I_\M v_h|_{2,\pw}+|I_\M v_h-v_h|_{2,\pw}+|v_h-v|_{2,\pw}).\label{4.11a}
	 \end{align} 
	 The estimation of the upper bound in \eqref{4.11a} involves a few arguments like the triangle inequality
	 $\|h_\T^{-2}(Gv_h-I_\M v_h)\|_{L^2(\Omega)}\leq \|h_\T^{-2}(Gv_h- v_h)\|_{L^2(\Omega)}+\|h_\T^{-2}(v_h-I_\M v_h)\|_{L^2(\Omega)}$, $ \|h_\T^{-2}(Gv_h- v_h)\|_{L^2(\Omega)}\leq C_\PF|v_h-Gv_h|_{2,\pw}$ from Lemma~\ref{lem2.1}, $\|h_\T^{-2}(v_h-I_\M v_h)\|_{L^2(\Omega)}+|v_h-I_\M v_h|_{2,\pw}\leq 2 |v_h-Gv_h|_{2,\pw}$ from \eqref{2.20}, and $|v_\M-I_\M v_h|\leq C_\M |v_h-Gv_h|_{2,\pw}$ from Lemma~\ref{M}. This and the fact that $v\in V$ is arbitrary eventually result in
	 \begin{align}
	 |J'v_\M-Jv_h|_{2,\pw}\leq C_b^2(2+C_\PF+C_\M+C_{J'}(2+C_\M))\Big(|v_h-Gv_h|_{2,\pw}+\min_{v\in V}|v_h-v|_{2,\pw}\Big)\label{3.8a}
	 \end{align}

	  \noindent as well as $|v_\M-v_h|_{2,\pw}\leq |v_\M-I_\M v_h|_{2,\pw}+|I_\M v_h-v_h|_{2,\pw}\leq (2+C_\M)|v_h-Gv_h|_{2,\pw}$. The latter estimate, 
	  Lemma~\ref{J'}.d,  and a triangle inequality lead, for any $v\in V$, to
	  \begin{align}
	  C_{J'}^{-1}|v_\M-J'v_\M|_{2,\pw}\leq |v_\M-v|_{2,\pw}\leq (2+C_\M)|v_h-Gv_h|_{2,\pw}+|v_h-v|_{2,\pw}.\label{17a}
	  \end{align}
	  Recall $|I_\M v_h-v_\M|_{2,\pw}\leq C_\M|v_h-Gv_h|_{2,\pw}$ from Lemma~\ref{M} and deduce $|Gv_h-I_\M v_h|_{2,\pw}\leq |v_h-Gv_h|_{2,\pw}$ from \eqref{m1}. The aforementioned estimates and \eqref{3.8a}-\eqref{17a} allow the estimation of all terms in the upper bound of the triangle inequality
	 \begin{align*}|Gv_h-Jv_h|_{2,\pw}&\leq|Gv_h-I_\M v_h|_{2,\pw}+|I_\M v_h-v_\M|_{2,\pw}+|v_\M-J'v_\M|_{2,\pw}+|J'v_\M-Jv_h|_{2,\Omega}\\&\leq C_{JP}\big(|v_h-Gv_h|_{2,\pw}+\min_{v\in V}|v_h-v|_{2,\pw}\big) \end{align*}
	  with $C_{JP}:=1+C_\M+C_{J'}(2+C_\M)+C_b^2(2+C_\PF+C_\M+C_{J'}(2+C_\M))$. 
	  This and \eqref{4.9} show
	  \begin{align}
	  \sum_{m=0}^2|h_\T^{m-2}(Gv_h-Jv_h)|_{m,\pw}\leq (1+C_\PF)C_{JP}\big(|v_h-Gv_h|_{2,\pw}+\min_{v\in V}|v_h-v|_{2,\pw}\big).\label{4.8}
	  \end{align}
	Lemma~\ref{lem2.1}, \eqref{4.8}, and   triangle inequalities \[\sum_{m=0}^2|h_\T^{m-2}(v_h-Jv_h)|_{m,\pw}\leq\sum_{m=0}^2|h_\T^{m-2}(v_h-Gv_h)|_{m,\pw}+\sum_{m=0}^2|h_\T^{m-2}(Gv_h-Jv_h)|_{m,\pw}\] conclude the proof of $(c)$ with $C_J:=(1+C_\PF)(1+C_{JP})$.
\end{proof}
\noindent\textit{Proof of $I_hJ =\mathrm{id}$ in $V_h$}.
	Recall $N= |\mathcal{V}|+|\mathcal{E}|$ from Subsection~2.1. Definition~\ref{gd} and \ref{H1}-\ref{H2} imply the existence of a (global)  nodal basis $(\psi_1,\dots,\psi_N)$  of $V_h$ with dof$_j(\psi_k)=\delta_{jk}$ for $j,k=1,\dots,N$. Definition~\ref{gi} of $I_h$  and $\text{dof}_j(Jv_h)= \text{dof}_j(v_h)$ from \eqref{a0}-\eqref{a1} show that
	\[
	I_h(Jv_h) = \sum_{j=1}^{N}\text{dof}_j(Jv_h)\psi_j =\sum_{j=1}^{N}\text{dof}_j(v_h)\psi_j = v_h.\qedhere
	\]	
\end{proof}

\begin{remark}[another companion]
	The properties $(a)$-$(c)$ in Theorem~\ref{J} can also be satisfied by other smoothers, e.g., by $J'I_\M$. The latter is not immediately computable; whence  the new definition of $J$  is adopted in this paper to be used as a smoother $Q$ in Section~5.
\end{remark}
\begin{remark}[generalizations]
	Any  linear operator $J:V_h\to V$ with $(a)$-$(c)$ allow the \textit{a priori} and \textit{a posteriori} error estimates in Section~5-6.   The analog design of such a conforming companion is  possible in 3D with the Morley companion operator $J'$ from \cite{carstensen2018prove} in 3D.
\end{remark}

\section{Discrete problem and a priori error analysis}
\subsection{Stabilization}
The  discrete VE functions $v_h\in V_h$ will not be computed explicitly, but  $Gv_h$ will. The resulting discrete counterpart $a_h(\cdot,\cdot)$ in $V_h$ of the scalar product $a(\cdot,\cdot)$  in \eqref{1.2} requires a stabilization. Recall the semi-scalar product $a^P$ from Subsection~2.3,  the number $N_P$ of vertices  for  $P\in\T$,   $\text{dof}_1,\dots,\text{dof}_{2N_P}$ from \eqref{dof}, and define the semi-scalar product $S^P:V_h(P)\times V_h(P)\to\mathbb{R}$, 
\begin{align}
S^P(v_h,w_h) = h_P^{-2}\sum_{j=1}^{2 N_P}\text{dof}_j(v_h)\text{dof}_j(w_h)\quad\text{for}\;v_h,w_h\in V_h(P).\label{ex}
\end{align}
\begin{lemma}\label{lems}
	There exists a positive constant $C_s$ (that exclusively depends on $\rho$) such that
	\begin{align}
	C_s^{-1}a^P(w_h,w_h)\leq S^P(w_h,w_h)\leq C_sa^P(w_h,w_h)\quad\text{for all}\;w_h\in (1-G)V_h(P).\label{stab}
	\end{align}
\end{lemma}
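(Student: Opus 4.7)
The plan is to exploit that every $w_h\in(1-G)V_h(P)$ can be written as $w_h=v_h-Gv_h$ for some $v_h\in V_h(P)$ and, since $G$ is a projection onto $\p_2(P)$ (so $G^2=G$), satisfies $Gw_h=0$. The two inequalities in \eqref{stab} then decouple and each reduces to a tool already in place: Lemma~\ref{lem} for the upper bound on $S^P$, and hypothesis \ref{H2} for the upper bound on $a^P$. Note that $w_h\in V_h(P)$ makes sense because $Gv_h\in\p_2(P)\subseteq V_h(P)$ by \ref{H1}, so the nodal-basis expansion is available.

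For the upper bound $S^P(w_h,w_h)\leq C_s\,a^P(w_h,w_h)$, I would apply Lemma~\ref{lem} with $v:=v_h\in H^2(P)$: since $w_h=v_h-Gv_h$, this delivers
\[|\mathrm{Dof}(w_h)|_{\ell^2}\;\leq\;C_d\,h_P\,|w_h|_{2,P}.\]
Squaring both sides and dividing by $h_P^2$ identifies the left-hand side as $S^P(w_h,w_h)$ by the definition \eqref{ex}, and yields the stated bound with constant $C_d^2$.

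For the reverse inequality $a^P(w_h,w_h)\leq C_s\,S^P(w_h,w_h)$, I would expand $w_h$ in the local nodal basis granted by \ref{H1}, so that $w_h=\sum_{j=1}^{2N_P}\text{dof}_j(w_h)\,\psi_j$. A triangle inequality followed by the Cauchy--Schwarz inequality and the basis bound in \ref{H2} then yield
\[|w_h|_{2,P}\;\leq\;|\mathrm{Dof}(w_h)|_{\ell^2}\,\Big(\sum_{j=1}^{2N_P}|\psi_j|_{2,P}^2\Big)^{1/2}\;\leq\;C_{\text{stab}}\,h_P^{-1}\,|\mathrm{Dof}(w_h)|_{\ell^2}.\]
Squaring gives $a^P(w_h,w_h)\leq C_{\text{stab}}^2\,S^P(w_h,w_h)$, and choosing $C_s:=\max\{C_d^2,C_{\text{stab}}^2\}$ (a constant depending only on $\rho$) closes the argument.

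No genuine obstacle is expected: the subspace condition $w_h\in(1-G)V_h(P)$ is precisely what activates the $v-Gv$ form in Lemma~\ref{lem}, while \ref{H2} was engineered to deliver exactly the stability of the nodal-basis representation needed in the reverse direction. In this sense Lemma~\ref{lem} and \ref{H2} are perfectly dual to one another, and the whole proof fits on one page with no hidden analytic input beyond these two ingredients.
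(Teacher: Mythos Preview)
Your proof is correct and follows essentially the same approach as the paper: the paper also uses the nodal-basis expansion together with \ref{H2} for the bound $a^P\leq C_s S^P$, and Lemma~\ref{lem} (applied after noting $Gw_h=0$, so $w_h=w_h-Gw_h$) for the bound $S^P\leq C_s a^P$, arriving at the identical constant $C_s=\max\{C_{\text{stab}}^2,C_d^2\}$.
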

\begin{proof}
	For any $w_h=\sum_{j=1}^{2N_P}\text{dof}_j(w_h)\psi_j\in V_h(P)$ with the nodal basis functions $\psi_1,\dots,\psi_{2N_P}$ of $V_h(P)$ from \ref{H1}-\ref{H2}, a Cauchy-Schwarz inequality and \ref{H2} show that
	\begin{align*}
	|w_h|_{2,P}\leq |\text{Dof}(w_h)|_{\ell^2}\Big(\sum_{j=1}^{2N_P}|\psi_j|_{2,P}^2\Big)^{1/2}\leq C_{\text{stab}}h_P^{-1}|\text{Dof}(w_h)|_{\ell^2}.
	\end{align*}
	The sum over all squared estimates for $P\in\T$ proves the first inequality in \eqref{stab} for $C_s=C_{\text{stab}}^2$. Note that $Gw_h=0$ for each $w_h\in (1-G)V_h(P)$ and Lemma~\ref{lem2.1}-\ref{lem} result in 
	\[h_P^{-1}|\text{Dof}(w_h)|_{\ell^2}=h_P^{-1}|\text{Dof}(w_h-Gw_h)|_{\ell^2}\leq C_d|w_h-Gw_h|_{2,P}\leq C_d|w_h|_{2,P}.\]
	The sum over all squared estimates for $P\in\T$ proves the second inequality in \eqref{stab}. This concludes the proof with $C_s:=\max\{C_{\text{stab}}^2,C_d^2\}$.
\end{proof}
\begin{remark}[generalization]
	The \textit{a priori} and \textit{a posteriori} error analysis in Section~5-6 hold for any semi-scalar product $S^P:V_h(P)\times V_h(P)\to \mathbb{R}$ with \eqref{stab}. 
\end{remark}

\subsection{Discrete problem}
 Recall  $a_\pw$  from Subsection~2.3 and $(S^P : P\in\T)$  from \eqref{ex}. Define the discrete semi-scalar products, for $v_h,w_h\in V_h$, by
 
\begin{align}
a_h(v_h,w_h)&:=\sum_{P\in\T}a_h^P(v_h,w_h):=a_\pw(Gv_h,Gw_h)+s_h(v_h,w_h),\label{4.4}\\ s_h(v_h,w_h)&:=\sum_{P\in\T}S^P((1-G)v_h,(1-G)w_h).\label{4.5}
\end{align}
  Recall that $|\cdot|_{2,\pw}$ defines a norm in $V_h$ (cf. Lemma~\ref{eq}), so $a_h(\cdot,\cdot)$ is a scalar product in $V_h$.

\begin{lemma}[boundedness and ellipticity of $a_h$]\label{lem2.5}
	Any  $v_h, w_h\in V_h$ satisfy
	\begin{align*}
	a_h(v_h,w_h)\leq (1+C_s)|v_h|_{2,\pw}|w_h|_{2,\pw}
	\quad\text{and}\quad
	C_s^{-1}|v_h|^2_{2,\pw} \leq a_h(v_h,v_h).
	\end{align*}
\end{lemma}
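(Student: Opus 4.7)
Both estimates reduce to splitting $a_h$ into its two natural pieces and invoking Lemma~\ref{lems} together with the orthogonal decomposition $v_h=Gv_h+(1-G)v_h$. Throughout I will use the Pythagoras identity $|v_h|_{2,P}^2=|Gv_h|_{2,P}^2+|(1-G)v_h|_{2,P}^2$, which follows from the defining orthogonality \eqref{pid1} of the local Galerkin projection. Without loss of generality I may assume $C_s\ge 1$ in Lemma~\ref{lems} (otherwise replace $C_s$ by $\max\{1,C_s\}$), so that $\min\{1,C_s^{-1}\}=C_s^{-1}$.

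\textbf{Boundedness.} For the consistency part I apply a Cauchy--Schwarz inequality in $L^2(\Omega;\mathbb S)$ followed by the Pythagoras inequality $|Gv_h|_{2,\pw}\le|v_h|_{2,\pw}$ (from Lemma~\ref{lem2.1}):
\[
a_\pw(Gv_h,Gw_h)\le|Gv_h|_{2,\pw}\,|Gw_h|_{2,\pw}\le|v_h|_{2,\pw}\,|w_h|_{2,\pw}.
\]
For the stabilization part I apply a Cauchy--Schwarz inequality for the positive semi-definite form $s_h$ and then the upper bound in Lemma~\ref{lems} on each $P\in\T$ (noting $(1-G)v_h|_P\in(1-G)V_h(P)$):
\[
s_h(v_h,w_h)\le s_h(v_h,v_h)^{1/2}s_h(w_h,w_h)^{1/2}\le C_s|(1-G)v_h|_{2,\pw}|(1-G)w_h|_{2,\pw}\le C_s|v_h|_{2,\pw}|w_h|_{2,\pw},
\]
again using Pythagoras to bound $|(1-G)v_h|_{2,\pw}\le|v_h|_{2,\pw}$. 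Adding the two estimates yields the claimed constant $1+C_s$.

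\textbf{Ellipticity.} Setting $v_h=w_h$ in the definition of $a_h$ and applying the lower bound from Lemma~\ref{lems} to each polygonal contribution of $s_h(v_h,v_h)$ gives
\[
a_h(v_h,v_h)=|Gv_h|_{2,\pw}^2+s_h(v_h,v_h)\ge|Gv_h|_{2,\pw}^2+C_s^{-1}|(1-G)v_h|_{2,\pw}^2.
\]
Since $C_s^{-1}\le 1$, the right-hand side is bounded below by $C_s^{-1}\bigl(|Gv_h|_{2,\pw}^2+|(1-G)v_h|_{2,\pw}^2\bigr)=C_s^{-1}|v_h|_{2,\pw}^2$ by Pythagoras. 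This proves the coercivity estimate.

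\textbf{Main obstacle.} There is no real obstacle here; the work has already been done in Lemma~\ref{lem2.1} and Lemma~\ref{lems}. The only point requiring a word of care is the orthogonality of $Gv_h$ and $(1-G)v_h$ in the piecewise $H^2$ semi-norm, which is exactly the content of \eqref{pid1} and is what makes the two-piece splitting tight enough to produce the clean constants $1+C_s$ and $C_s^{-1}$.
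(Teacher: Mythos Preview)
Your proof is correct and follows exactly the approach the paper sketches: split $a_h$ into its consistency and stabilization parts, apply Cauchy--Schwarz and the two-sided bound \eqref{stab} from Lemma~\ref{lems}, and use the Pythagoras identity $|v_h|_{2,P}^2=|Gv_h|_{2,P}^2+|(1-G)v_h|_{2,P}^2$ coming from \eqref{pid1}. Your remark that $C_s\ge 1$ may be assumed without loss of generality is harmless and in fact automatic from the two-sided estimate in \eqref{stab} whenever $(1-G)V_h(P)\neq\{0\}$.
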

\begin{proof}
	The boundedness follows from the definition of $a_h$ and  \eqref{stab}. The ellipticity follows from  \eqref{stab},\eqref{pid1}, and the Pythagoras identity $|v_h|^2_{2,P}=|(1-G)v_h|_{2,P}^2+|Gv_h|_{2,P}^2$.
\end{proof}
\noindent The Riesz representation theorem guarantees the unique existence of a  solution  $u_h\in V_h$ to
\begin{align}
a_h(u_h,v_h) = F_h(v_h)\quad\text{for all}\;v_h\in V_h\label{dp}
\end{align}
for the right-hand side  $F_h(v_h):=F(Qv_h)$ with $Q=G$ (standard VEM)  or  $Q=J$ (smoother).
\subsection{A priori error estimates}
This subsection establishes an error estimate with respect to the  norms $|\cdot|_{2,\pw}$ and   $|\cdot|_{1,\pw}$. Recall elliptic regularity and $0<\sigma\leq 1$ from \eqref{2.14} for the weak solution $u\in H^2_0(\Omega)\cap H^{2+\sigma}(\Omega)$ to \eqref{3} provided $f\in L^2(\Omega)$ with $F(\cdot):=(f,\cdot)_{L^2(\Omega)}$ in $V$. Let $u_h\in V_h$ solve  \eqref{dp} and recall the maximal mesh-size $h_{\text{max}}$. The second part of the assertion implies \eqref{quasi}.  
\begin{theorem}[error estimates]\label{err}
	There exist positive constants $C_1$ and $C_2$ (that exclusively depend on $\rho$) such that  $f\in L^2(\Omega)$ and $Q=G$ or $Q=J$ imply
	\begin{align*}
	&h^{-\sigma}_{\text{max}}(|u-u_h|_{1,\pw}+|u-Gu_h|_{1,\pw})+|u-u_h|_{2,\pw}+|u-Gu_h|_{2,\pw}\\&\qquad\leq C_1(|u-\pid u|_{2,\pw}+\mathrm{osc}_2(f,\T))\leq C_2 h^{\sigma}_{\text{max}}\|f\|_{L^2(\Omega)}.
	\end{align*}
	For $F\in V^*=H^{-2}(\Omega)$ and solely for $Q=J$, it holds (even without extra regularity of $u\in V$)
	\begin{align*}
	h^{-\sigma}_{\text{max}}(|u-u_h|_{1,\pw}+|u-Gu_h|_{1,\pw})+|u-u_h|_{2,\pw}+|u-Gu_h|_{2,\pw}\leq C_1|u-\pid u|_{2,\pw}.
	\end{align*}
\end{theorem}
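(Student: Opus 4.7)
The plan is a variant of the second Strang lemma that exploits the conforming companion $J$ from Theorem~\ref{J} to reduce consistency errors to $|u - \pid u|_{2,\pw}$ (plus oscillation when $Q = G$), followed by an Aubin--Nitsche duality argument to gain the factor $h_{\max}^\sigma$ in the $H^1$ norms. Set $w_h := I_h u \in V_h$ and $e_h := u_h - w_h \in V_h$. Ellipticity from Lemma~\ref{lem2.5} gives $C_s^{-1} |e_h|^2_{2,\pw} \le a_h(e_h, e_h) = F_h(e_h) - a_h(w_h, e_h)$. Expand $a_h(w_h, e_h) = a_\pw(G w_h, G e_h) + s_h(w_h, e_h)$ and simplify $G w_h = G I_h u = G u$ by Theorem~\ref{2.10}(b).

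For $Q = J$, write $F_h(e_h) = F(J e_h) = a(u, J e_h)$ because $J e_h \in V$, split $a(u, J e_h) = a_\pw(G u, J e_h) + a_\pw(u - G u, J e_h)$, and use $D^2 G u \in \p_0(\T; \mathbb{S})$ combined with Theorem~\ref{J}(a) to deduce $a_\pw(G u, J e_h) = a_\pw(G u, e_h)$, which further collapses to $a_\pw(G u, G e_h)$ by the Galerkin orthogonality $a^P(\chi, (1 - G) e_h) = 0$ for $\chi \in \p_2(P)$. The remaining quantity $a_\pw(u - G u, J e_h) - s_h(w_h, e_h)$ is bounded via Cauchy--Schwarz with $|J e_h|_{2,\Omega} \lesssim |e_h|_{2,\pw}$ from Theorem~\ref{J}(c) (applied with $v = 0 \in V$) and the stabilization bound $|(1 - G) w_h|_{2,P} \le (1 + C_\mathrm{I}) |u - \pid u|_{2,P}$ that follows from $G w_h = G u$ and Theorem~\ref{2.10}(d). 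Dividing by $|e_h|_{2,\pw}$ yields $|e_h|_{2,\pw} \lesssim |u - \pid u|_{2,\pw}$, and the $H^2$ estimates for $u - u_h$ and $u - G u_h$ follow via triangle inequalities with Theorem~\ref{2.10}(d) and $G(u_h - w_h) = G e_h$. For $Q = G$ and $f \in L^2(\Omega)$, the same scheme applies after rewriting $(f, G e_h)_{L^2} = (f, G e_h - J e_h)_{L^2} + a(u, J e_h)$; the extra term is handled by Theorem~\ref{J}(b), which forces $G e_h - J e_h \perp \p_2(\T)$, so $(f, G e_h - J e_h)_{L^2} = (f - \Pi_2 f, G e_h - J e_h)_{L^2}$, and a weighted Cauchy--Schwarz with $\|h_\T^{-2}(G e_h - J e_h)\|_{L^2(\Omega)} \lesssim |e_h|_{2,\pw}$ from Theorem~\ref{J}(c) delivers $\mathrm{osc}_2(f, \T)$. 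Part~2 of the theorem ($F \in H^{-2}$, $Q = J$) is covered by the $Q = J$ argument above without any regularity of $F$ beyond $F \in V^*$ and with no oscillation term.

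To upgrade to $h_{\max}^{-\sigma}$ in front of the $H^1$ norms, apply Aubin--Nitsche duality: pick $\phi \in L^2(\Omega)$ of unit norm, let $z \in V$ solve $a(z, v) = (\phi, v)_{L^2}$ for all $v \in V$, and invoke \eqref{2.14} with $s = 2 - \sigma$ to obtain $z \in H^{2+\sigma}$ with $\|z\|_{2+\sigma} \lesssim 1$. Mimicking the energy-error analysis above but now with test function $z$ (and absorbing the conforming/nonconforming mismatches through $J u_h$, $I_h z$, and $\pid z$), one arrives at $\|u - G u_h\|_{L^2(\Omega)} \lesssim h_{\max}^\sigma (|u - \pid u|_{2,\pw} + \mathrm{osc}_2(f, \T))$; the factor $h_{\max}^\sigma$ arises from $|z - \pid z|_{2,\pw} \le C_\mathrm{apx} h_{\max}^\sigma |z|_{2+\sigma}$ in Lemma~\ref{lem2.1}. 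The $H^1$ bound follows by piecewise interpolation $|v|_{1,P} \le |v|_{0,P}^{1/2} |v|_{2,P}^{1/2}$ summed over $P \in \T$. Finally, $|u - \pid u|_{2,\pw} \le C_\mathrm{apx} h_{\max}^\sigma |u|_{2+\sigma}$ and $|u|_{2+\sigma} \lesssim \|f\|_{L^2(\Omega)}$ from \eqref{2.14}, together with $\mathrm{osc}_2(f, \T) \le h_{\max}^2 \|f\|_{L^2(\Omega)}$, close the second inequality of Part~1.

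The technical heart is the duality step: since $G u_h$ is not globally $H^2$-conforming, the naive identity $(\phi, u - G u_h)_{L^2} = a(z, u - G u_h)$ is unavailable, and one must decompose $u - G u_h = (u - u_h) + (u_h - J u_h) + (J u_h - G u_h)$, treat each piece using the orthogonalities from Theorem~\ref{J}(a)--(c), and keep the nonconforming consistency boundary terms correctly paired with the appropriate projection. Ensuring the oscillation term stays correctly scaled while preserving the fractional regularity exponent $\sigma$ throughout is the most delicate bookkeeping in the proof.
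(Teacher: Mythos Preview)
Your $H^2$ analysis is correct and essentially identical to the paper's: the key identity via $J$, the treatment of $s_h(I_hu,\cdot)$, and the oscillation term through Theorem~\ref{J}(b)--(c) all match.

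The $H^1$ part, however, has a genuine gap. Your route is $L^2$ duality followed by the piecewise interpolation $|v|_{1,P}\le |v|_{0,P}^{1/2}|v|_{2,P}^{1/2}$. First, that seminorm inequality is false as stated (take $v\in\p_1(P)$). Second, and more seriously, even with a correct Gagliardo--Nirenberg inequality the rate is wrong: your $L^2$ duality produces $\|u-Gu_h\|_{L^2(\Omega)}\lesssim h_{\max}^\sigma R$ with $R:=|u-\pid u|_{2,\pw}+\mathrm{osc}_2(f,\T)$, and combining this with $|u-Gu_h|_{2,\pw}\lesssim R$ through interpolation yields only $|u-Gu_h|_{1,\pw}\lesssim h_{\max}^{\sigma/2}R$, not the claimed $h_{\max}^{\sigma}R$. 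A single application of Aubin--Nitsche picks up one factor $h_{\max}^\sigma$ from $|z-\pid z|_{2,\pw}$, and interpolation halves that gain; you cannot reach the full $h_{\max}^\sigma$ in front of the $H^1$ seminorm this way.

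The paper avoids this loss by choosing a dual problem tailored to the $H^1$ seminorm: it sets $z\in V$ to solve $\Delta^2 z=-\Delta(Je_h)$, so that $|Je_h|_{1,\Omega}^2=a(Je_h,z)$ directly, and then exploits the regularity bound $\|z\|_{2+\sigma,\Omega}\le C_{\mathrm{reg}}|Je_h|_{1,\Omega}$ from \eqref{2.14}. The subsequent algebra (splitting $a(Je_h,z)$, replacing $a_\pw(e_h,Gz)$ by the key identity with $v_h=I_hz$, and estimating $|JI_hz-GI_hz|_{2,\pw}\lesssim h_{\max}^\sigma|z|_{2+\sigma,\Omega}$) delivers $|Je_h|_{1,\Omega}\lesssim h_{\max}^\sigma R$ with the full power of $\sigma$, after which $|u-u_h|_{1,\pw}$ and $|u-Gu_h|_{1,\pw}$ follow by triangle inequalities and the local estimates for $u-I_hu$, $e_h-Je_h$, and $u_h-Gu_h$.
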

\begin{proof}
	\textit{Key identity}. 
	Let $e_h:=I_hu-u_h\in V_h$ and $v_h\in V_h$ with the interpolation $I_h$ from Definition~\ref{gi} and $GI_hu=Gu$ from Theorem~\ref{2.10}.b. The discrete problem \eqref{dp} and \eqref{4.4} imply
	\begin{align}
	a_h(e_h,v_h)=a_\pw(Gu,Gv_h)+s_h(I_hu,v_h)-F(Qv_h)=a_\pw(Gu,Jv_h)+s_h(I_hu,v_h)-F(Qv_h)\nonumber
	\end{align}
	with $a_\pw(Gu,G v_h)=a_\pw(Gu, v_h)=a_\pw(Gu,Jv_h)$ from  \eqref{pid1} and  Theorem~\ref{J}.a in the last step.  The continuous problem \eqref{3} with a test function $v=Jv_h$  reveals
	\begin{align}
	a_h(e_h,v_h)=a_\pw(Gu-u,Jv_h)+s_h(I_hu,v_h)+F(Jv_h)-F(Qv_h).\label{4.7}
	\end{align}
	\textit{Estimate of $a_\pw(Gu-u,Jv_h)$}.
	 A triangle inequality, Theorem~\ref{J}.c, and $|v_h-Gv_h|_{2,\pw}\leq |v_h|_{2,\pw}$ (cf. Lemma~\ref{lem2.1}) show \[|Jv_h|_{2,\Omega}\leq|v_h-Jv_h|_{2,\pw}+|v_h|_{2,\pw}\leq C_J(|v_h-Gv_h|_{2,\pw}+|v_h|_{2,\pw})+|v_h|_{2,\pw}\leq (1+2C_J)|v_h|_{2,\pw}.\] This and a  Cauchy-Schwarz inequality provide
	\begin{align}
	a_\pw(Gu-u,Jv_h)\leq(1+2C_J)|u-\pid u|_{2,\pw}|v_h|_{2,\pw}.\label{5.6a}
	\end{align}
	\textit{Estimates of $s_h(I_hu,v_h)$}. 
	A Cauchy-Schwarz inequality for the semi-scalar product $s_h(\cdot,\cdot)$ and \eqref{stab} provide
	$C_s^{-1}s_h(I_hu,v_h)\leq |(1-G)I_hu|_{2,\pw}|(1-G)v_h|_{2,\pw}.
	$
	A triangle equality, Lemma~\ref{lem2.1}, and Theorem~\ref{2.10}.b-d  in the end  result in 
	\begin{align}
	C_s^{-1}s_h(I_hu,v_h)\leq (|u-I_hu|_{2,\pw}+|u-Gu|_{2,\pw})|v_h|_{2,\pw}\leq (1+C_{\text{I}})|u-\pid u|_{2,\pw}|v_h|_{2,\pw}.\label{5.6b}
	\end{align}
	\textit{Estimate of $F(Jv_h)-F(Qv_h)$}. The term $F(Jv_h)-F(Qv_h)$ vanishes for $Q=J$, so let $Q=G$. The orthogonality $Jv_h-Gv_h\perp \p_2(\T)$ in $L^2(\Omega)$ from Theorem~\ref{J}.b  result in 
 \begin{align}
 F(Jv_h)-F(Qv_h)=(h_\T^{2}(f-\Pi_2f),h_\T^{-2}(Jv_h-Gv_h))_{L^2(\Omega)}\leq 2C_{J}\mathrm{osc}_2(f,\T)|v_h|_{2,\pw}. \label{3.2}
 \end{align}
 The last step  follows from  a Cauchy-Schwarz inequality,  Theorem~\ref{J}.c, and Lemma~\ref{lem2.1}.\\
	 \textit{Estimate of $|u-u_h|_{2,\pw}$}. The key identity \eqref{4.7} with $v_h=e_h$, the coercivity of $a_h$ from Lemma~\ref{lem2.5}, and \eqref{5.6a}-\eqref{5.6b} lead to $C_3:=1+2C_J+C_s(1+C_{\text{I}}))$ in
	 \begin{align}
	 |e_h|_{2,\pw}\leq C_3(|u-\pid u|_{2,\pw}+\mathrm{osc}_2(f,\T)).\label{eh}
	 \end{align}
	 A triangle inequality for  $u-u_h=(u-I_hu)+e_h$, Theorem~\ref{2.10}.d, and \eqref{eh} show that
	 \begin{align}
	 |u-u_h|_{2,\pw}\leq (C_{\text{I}}+C_3)(|u-\pid u|_{2,\pw}+\mathrm{osc}_2(f,\T)).\label{h2}
	 \end{align}
	 Recall that $\mathrm{osc}_2(f,\T)$ can be omitted in \eqref{3.2}-\eqref{h2} (and below in \eqref{4.18}-\eqref{h1}) if $Q=J$.\\
	 \textit{Duality solution and its regularity}. Let $z\in H^2_0(\Omega)\cap H^{2+\sigma}(\Omega)$ be the weak solution to $\Delta^2z = -\Delta Je_h\in L^2(\Omega)$ and recall  $\|z\|_{2+\sigma,\Omega}\leq C_{\mathrm{reg}} |\Delta Je_h|_{-1,\Omega}\leq C_{\mathrm{reg}}|Je_h|_{1,\Omega}$ from \eqref{2.14}.  The weak formulation of $\Delta^2z = -\Delta Je_h\in L^2(\Omega)$  leads to
	 $
	 |Je_h|^2_{1,\Omega}=a(Je_h,z).
	 $\\
	 \textit{Reduction to the key term $a_h(e_h,I_hz)$}. Elementary algebra reveals (with $GI_hz=Gz$)
	 \begin{align}
	 |Je_h|^2_{1,\Omega}=a_\pw(Je_h-e_h,z)+a_\pw(e_h,z-Gz)+a_\pw(e_h,Gz). \label{4.12}
	 \end{align}
	 Theorem~\ref{J}.a implies $a_\pw(Je_h-e_h,z)=a_\pw(Je_h-e_h,z-Gz)$. Hence a Cauchy-Schwarz inequality and Theorem~\ref{J}.c imply that
	 \begin{align}
	 a_\pw(Je_h-e_h,z)&\leq (|e_h-Ge_h|_{2,\pw}|+|e_h|_{2,\pw})|z-Gz|_{2,\pw}\nonumber\\&\leq 2C_{\mathrm{apx}}C_Jh^\sigma_{\text{max}}|e_h|_{2,\pw}|z|_{2+\sigma,\Omega}\label{3.5a}
	 \end{align}
	 with Lemma~\ref{lem2.1}  in the end. A Cauchy-Schwarz inequality and Lemma~\ref{lem2.1} show 
	 \begin{align}
	 a_\pw(e_h,z-Gz)\leq |e_h|_{2,\pw}|z-\pid z|_{2,\pw}\leq C_{\text{apx}}h^\sigma_{\text{max}}|e_h|_{2,\pw}|z|_{2+\sigma,\Omega}.\label{4.14}
	 \end{align}
	 \textit{Key identity revisited}. The definition \eqref{4.4} and the identity   \eqref{4.7}    lead, for $v_h=I_hz$, to 
	 \begin{align*}
	 a_\pw(e_h,Gz)=a_h(e_h,I_hz)-s_h(e_h,I_hz)
	 =a_\pw(Gu-u,JI_hz)+F(JI_hz)-F(QI_hz).
	 \end{align*}
	 \textit{Estimate of $a_\pw(Gu-u,JI_hz)$}.
	 The definition of G in \eqref{pid1}  shows $a_\pw(Gu-u,JI_hz)=a_\pw(\pid u-u,JI_hz-GI_hz)$. Theorem~\ref{J}.c (with $v_h=I_hz$ and $v=z$) and Theorem~\ref{2.10}.b imply $|JI_hz-GI_hz|_{2,\pw}\leq C_{J}(|I_hz-Gz|_{2,\pw}+|I_hz-z|_{2,\pw})$. A triangle inequality reveals
	 \begin{align}
	 C_{J}^{-1}|JI_hz-GI_hz|_{2,\pw}\leq (2|z-I_hz|+|z-Gz|_{2,\pw})\leq (2C_{\text{I}}+C_{\text{apx}})h^\sigma_{\text{max}}|z|_{2+\sigma,\Omega}\label{5.20}
	 \end{align}
	 with Theorem~\ref{2.10}.d and Lemma~\ref{lem2.1}   in the last step.    This and a Cauchy-Schwarz inequality prove
	 \begin{align}
	 a_\pw(Gu-u,JI_hz)\leq C_{J}(2C_{\text{I}}+C_{\text{apx}})h^\sigma_{\text{max}}|u-\pid u|_{2,\pw}|z|_{2+\sigma,\Omega}.\label{4.16}
	 \end{align}
	 \textit{Estimate of $F(JI_hz)-F(QI_hz)$}. The term $F(JI_hz)-F(QI_hz)$ vanishes for $Q=J$, so let $Q=G$. The estimate  \eqref{3.2} for $v_h=I_hz$ provides $F(JI_hz)-F(QI_hz)\leq \mathrm{osc}_2(f,\T)\|h_\T^{-2}(JI_hz-GI_hz)\|_{L^2(\Omega)}.$ Theorem~\ref{J}.c (with $v=z$), Theorem~\ref{2.10}.b, and \eqref{5.20} show 
	 \begin{align}
	 F(JI_hz)-F(QI_hz)&\leq C_{J}\mathrm{osc}_2(f,\T)(|I_hz-Gz|_{2,\pw}+|I_hz-z|_{2,\pw})\nonumber\\&\leq C_{J}
	 (2C_{\text{I}}+C_{\text{apx}})h_{\text{max}}^\sigma\mathrm{osc}_2(f,\T)|z|_{2+\sigma,\Omega}.\label{4.17}
	 \end{align}
	 \textit{Estimate of $|u-u_h|_{1,\pw}$}. The combination of \eqref{4.12}-\eqref{4.14} and \eqref{4.16}-\eqref{4.17} imply
	 \[|Je_h|^2_{1,\Omega}\leq C_4h_{\text{max}}^\sigma(|u-\pid u|_{2,\pw}+\mathrm{osc}_2(f,\T))|z|_{2+\sigma,\Omega}
	 \]
	 with $C_4:=C_{\text{apx}}(1+2C_{J})+C_J(2C_{\text{I}}+C_{\text{apx}}))$. This and the aforementioned regularity   show 
	 \begin{align}|Je_h|_{1,\Omega}\leq C_{\text{reg}}C_4h_{\text{max}}^\sigma(|u-\pid u|_{2,\pw}+\mathrm{osc}_2(f,\T)).\label{4.18}\end{align}
	 A triangle inequality for $u-u_h=(u-I_hu)+(e_h-Je_h)+Je_h$,
	 Theorem~\ref{2.10}.d for the first term, Theorem~\ref{J}.c and \eqref{eh} for the second term, and \eqref{4.18} for the last term  result  in
	 \begin{align}
	 |u-u_h|_{1,\pw}&\leq C_{\text{I}}h_{\text{max}}|u-\pid u|_{2,\pw}+2C_Jh_{\text{max}}|e_h|_{2,\pw}+|Je_h|_{1,\Omega}\nonumber\\&\leq (C_{\text{I}}+2C_JC_3+C_4C_{\text{reg}}) h^\sigma_{\text{max}}(|u-\pid u|_{2,\pw}+\mathrm{osc}_2(f,\T)).\label{h1}
	 \end{align}
	  \textit{Estimate of $|u-Gu_h|_{2,\pw}$}. 
	 Recall that $|\cdot|_s:=s_h(\cdot,\cdot)^{1/2}$ for $s_h(\cdot,\cdot)$ from Subsection~4.1  defines a seminorm in $V_h$ that is equivalent to $|(1-G)\cdot|_{2,\pw}$ owing to \eqref{stab}. It follows that \begin{align}
	 |u_h-Gu_h|_{2,P}\leq C_s^{1/2}S^P((1-G)u_h,(1-G)u_h)^{1/2}.\label{sb}
	 \end{align} 
	 This,  triangle inequalities, and $|(1-G)(u_h-I_hu)|_{2,\pw}\leq |u_h-I_hu|_{2,\pw}$ from Lemma~\ref{lem2.1} with Theorem~\ref{2.10}.b show
	 \begin{align}
	 |u_h|_{s}\leq |u_h-I_hu|_{s}+|I_hu|_{s}&\leq C_s^{1/2}
	 (|u_h-I_hu|_{2,\pw}+|(1-G)I_hu|_{2,\pw})\nonumber\\&\leq C_s^{1/2}(|u_h-I_hu|_{2,\pw}+|u-I_hu|_{2,\pw}+|u-\pid u|_{2,\pw}).\label{3.14}
	 \end{align}
	 The combination of \eqref{eh}-\eqref{h2} and \eqref{sb}-\eqref{3.14} with the triangle inequality $
	 |u-Gu_h|_{2,\pw}\leq |u-u_h|_{2,\pw}+|u_h-Gu_h|_{2,\pw}$  results in
	 \[|u-Gu_h|_{2,\pw}\leq (C_s+(1+C_s)(C_{\text{I}}+C_3))(|u-\pid u|_{2,\pw}+\mathrm{osc}_2(f,\T)).\]
	 \textit{Estimate of $|u-Gu_h|_{1,\pw}$}. A triangle inequality and  Lemma~\ref{lem2.1} provide
	 \begin{align*}
	 |u-Gu_h|_{1,\pw}\leq |u-u_h|_{1,\pw}+C_\PF h_{\text{max}}|u_h-Gu_h|_{2,\pw}.
	 \end{align*}
	 This and the combination of \eqref{h1}-\eqref{3.14}  conclude the proof.
\end{proof}
\section{A posteriori error analysis}
This section establishes a reliable and (up to data oscillations) efficient  explicit residual-based \textit{a posteriori} error estimator for a source term  $f\in L^2(\Omega)$ for both cases $Q=G$ and $Q=J$. Given any  polygon  $P\in\T$ and the discrete solution $u_h\in V_h$ to \eqref{dp}, define the computable terms
\begin{center}
	\begin{tabular}{lr}
		$\eta_P^2:=h_P^4\|f\|_{L^2(P)}^2$&(volume residual),\\
		$\zeta_P^2:=S^P((1-G)u_h,(1-G)u_h)$&(stabilization),\\
		$\Xi_{P}^2:=\sum_{E\in \e(P)}\Big(h_E^{-3}\|[Gu_h]_E\|_{L^2(E)}^2+h_E^{-1}\|[(Gu_h)_\bn]_E\|_{L^2(E)}^2\Big)$&(nonconformity),\\
		$\mu_P^2:=\eta_P^2+\zeta_P^2+\Xi_P^2$&(error estimator).
	\end{tabular}
\end{center}
Those local quantities $\bullet|_P$ form a family ($\bullet|_P:P\in\T$) over the index set $\T$ and their Euclid vector norms $\bullet|_\T$ enter the upper error bounds \begin{align*}
\eta_{\T}:=(\sum_{P\in\T}\eta_P^2)^{1/2}, \quad \zeta_\T:=(\sum_{P\in\T}\zeta_P^2)^{1/2},\quad  \Xi_\T:=(\sum_{P\in\T}\Xi_P^2)^{1/2},\quad\mu_\T:=(\sum_{P\in\T}\mu_P^2)^{1/2}.
\end{align*} 
\begin{theorem}[reliability]\label{rel}
	There exist  positive constants $C_{\mathrm{r1}}$ and $C_{\mathrm{r2}}$ (that exclusively depend on $\rho$), such that,  for $m=1,2$,
	\begin{align}
	C_{\mathrm{rm}}^{-2}(|u-u_h|_{m,\pw}^2+|u-Gu_h|_{m,\pw}^2)&\leq  \sum_{P\in\T}h_P^{2\sigma(2-m)}\mu_P^2. \label{5.21}
	\end{align}
\end{theorem}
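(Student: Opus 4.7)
The strategy is to reduce the four left-hand-side terms in \eqref{5.21} to a conforming error $|u-\hat u|_{2,\Omega}$ with $\hat u\in V$, at the cost of computable corrections controlled by $\zeta_\T$ and $\Xi_\T$, and then to apply a residual-based argument to the conforming piece. For the $m=1$ terms, an Aubin--Nitsche duality against the biharmonic auxiliary problem will yield the $h_{\max}^\sigma$ factor, as in the proof of Theorem~\ref{err}.

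For $m=2$, Lemma~\ref{lems} absorbs the stabilization via $|u_h-Gu_h|_{2,\pw}\le C_s^{1/2}\zeta_\T$, so by the triangle inequality it suffices to bound $|u-Gu_h|_{2,\pw}$. I take $\hat u\in V$ as the best $|\cdot|_{2,\Omega}$-approximation of $Gu_h$ in $V$, so that $|Gu_h-\hat u|_{2,\pw}=\min_{v\in V}|Gu_h-v|_{2,\pw}$. The plan is to prove the Morley-style nonconformity bound $\min_{v\in V}|Gu_h-v|_{2,\pw}\lesssim\zeta_\T+\Xi_\T$ by exhibiting the concrete candidate $v=J'v_\M$, with $v_\M\in\M(\tT)$ the computable Morley function from \eqref{newm} and $J'$ the Morley companion from Lemma~\ref{J'}: the jumps of $v_\M$ across edges $E\in\eE\setminus\e$ vanish because $v_\M$ inherits from the single polynomial $Gu_h|_P$ on both sides of such $E$, while the jumps on $E\in\e$ translate through \eqref{newm} into $[Gu_h]_E$ and $[(Gu_h)_\bn]_E$ (captured by $\Xi_\T$) plus dof-differences of $(v_h-Gv_h)|_P$ which are absorbed into $\zeta_\T$ via Lemma~\ref{lem}; Lemma~\ref{M} together with Theorem~\ref{2.10}.b then controls $|v_\M-Gu_h|_{2,\pw}$ by $\zeta_\T$.

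For the conforming piece, set $\phi:=u-\hat u\in V$ and use $a(u,\phi)=(f,\phi)_{L^2(\Omega)}$ to write
\[
|\phi|_{2,\Omega}^2=(f,\phi)_{L^2(\Omega)}-a_\pw(Gu_h,\phi)-a_\pw(\hat u-Gu_h,\phi).
\]
The last summand is absorbed by a Cauchy--Schwarz inequality and the $\Xi_\T$-bound on $|\hat u-Gu_h|_{2,\pw}$. A piecewise integration by parts on $a_\pw(Gu_h,\phi)$, using $\mathrm{div}\,D^2Gu_h|_P=\Delta^2 Gu_h|_P=0$ because $Gu_h|_P\in\p_2(P)$, reduces it to edge integrals of the $D^2 Gu_h$-jumps against $\nabla\phi$; a tangential integration along each $E\in\e$ combined with an edgewise averaging of $\phi$ and $\phi_\bn$ (a Verf\"urth-type device) re-expresses these second-derivative jumps in terms of the scalar jumps $[Gu_h]_E$ and $[(Gu_h)_\bn]_E$ featured in $\Xi_\T$. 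The volume contribution $(f,\phi)_{L^2(\Omega)}$ is bounded by $\eta_\T\,|\phi|_{2,\Omega}$ via Poincar\'e-type estimates for $\phi\in V$ against an elementwise polynomial projection. The $m=1$ inequality then follows by the Aubin--Nitsche duality from the proof of Theorem~\ref{err}: solve $\Delta^2 z = -\Delta J(u-u_h)$, invoke the elliptic regularity \eqref{2.14}, and repeat the above residual manipulation with test function $z$ (or $I_h z$) to harvest the $h_{\max}^\sigma$ factor. The main obstacle is the nonconformity step: translating the abstract $\min_{v\in V}|Gu_h-v|_{2,\pw}$ into the computable $\Xi_\T$-jumps of $Gu_h$, which hinges on the equality of $v_\M$ with $Gu_h$ on $\eE(P)\setminus\e(P)$ and on $\widehat{\mathcal{V}}(P)\setminus\mathcal{V}(P)$ and concentrates all nonconformity on the skeleton $\e$.
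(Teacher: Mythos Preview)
There is a genuine gap in your conforming residual step. Your decomposition
\[
|\phi|_{2,\Omega}^2=(f,\phi)_{L^2(\Omega)}-a_\pw(Gu_h,\phi)-a_\pw(\hat u-Gu_h,\phi)
\]
never invokes the discrete equation \eqref{dp}, and without it neither of the first two summands is controlled by the estimator. Concretely: $(f,\phi)_{L^2(\Omega)}\le\eta_\T\,|\phi|_{2,\Omega}$ would require $\|\phi\|_{L^2(P)}\lesssim h_P^2|\phi|_{2,P}$ elementwise, which fails because $\phi=u-\hat u\in V$ has no local mean-zero property on $P$. For the second summand, piecewise integration by parts of $a_\pw(Gu_h,\phi)$ produces edge integrals $\int_E[(Gu_h)_{\bn\bn}]_E\phi_\bn\,ds$ and $\int_E[(Gu_h)_{\bn\bt}]_E\phi_\bt\,ds$, i.e.\ \emph{second-derivative} jumps of $Gu_h$. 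These cannot be re-expressed in terms of the scalar jumps in $\Xi_\T$: take $Gu_h|_{P_+}=x^2$ and $Gu_h|_{P_-}=0$ across $E=\{x=0\}$ with $\bn_E=(1,0)$; then $[Gu_h]_E=[(Gu_h)_\bn]_E=0$ while $[(Gu_h)_{\bn\bn}]_E=2$. No tangential integration or Verf\"urth-style edge averaging recovers $\Xi_\T$ from $[(Gu_h)_{\bn\bn}]_E$.

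The paper bypasses both difficulties by inserting the discrete problem through the interpolation $I_h$. Since $a_\pw(Gu_h,GI_he)=a_\pw(Gu_h,I_he)=a_\pw(Gu_h,e)$ by \eqref{pid1} and Theorem~\ref{2.10}.a, the equation $a_h(u_h,I_he)=F(QI_he)$ turns \eqref{3.12} into
\[
|u-E_hGu_h|^2_{2,\Omega}=F(e)-F(QI_he)+a_\pw(Gu_h-E_hGu_h,e)+s_h(u_h,I_he),
\]
where now $F(e)-F(QI_he)$ is bounded by $\eta_\T$ via $\|e-QI_he\|_{L^2(P)}\lesssim h_P^2|e|_{2,P}$ (the interpolation error supplies the missing Poincar\'e inequality), $s_h(u_h,I_he)$ by $\zeta_\T$, and the consistency term by $\Xi_\T$ directly via the enrichment bound of Lemma~\ref{en}. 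No second-derivative jumps ever appear. For $m=1$ your dual problem $\Delta^2 z=-\Delta J(u-u_h)$ is ill-posed as written because $J$ acts only on $V_h$; the paper dualizes against $u-J'I_\M u_h\in V$ and again routes the discrete equation through $I_h\Psi$ to obtain the local $h_P^\sigma$ weights.
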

\noindent The proof of Theorem~\ref{rel} uses an enrichment operator $E_h:\p_2(\tT)\to H^2_0(\Omega)$ from \cite{georgoulis2011posteriori}.

\begin{lemma}\label{en}
	There exists a positive constant $C_a$ (that exclusively depends on $\rho$) such that any $v_2\in\p_2(\tT)$ satisfies
	\begin{align}
	|v_2-E_hv_2|^2_{2,\pw}\leq C_a^{2}\sum_{E\in\e}\Big(h_E^{-3}\|[v_2]_E\|_{L^2(E)}^2+h_E^{-1}\|[(v_2)_{\bn}]_E\|_{L^2(E)}^2\Big).\label{P}
	\end{align}
\end{lemma}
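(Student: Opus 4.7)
The plan is to take $E_h$ to be the Hsieh--Clough--Tocher (HCT) style $H^2_0$-enrichment on the uniformly shape-regular sub-triangulation $\tT$, whose shape-regularity constant depends only on $\rho$ by (M2). The HCT macro-element is $C^1$-conforming on each triangle, with degrees of freedom consisting of the point value and the two first partial derivatives at each vertex together with the mean normal derivative on each edge; these dofs allow one to enforce $H^2_0(\Omega)$-conformity globally by averaging at interior dofs and zeroing boundary dofs.

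First I would define $E_h v_2$ by: at each interior vertex $z\in\widehat{\mathcal{V}}$, take the arithmetic mean of $v_2|_{T'}(z)$ and of $\nabla v_2|_{T'}(z)$ over the triangles $T'\in\tT$ sharing $z$; at each interior edge $E\in\eE$, take the mean of $\dashint_E(v_2)_\bn\,ds$ from the two adjacent triangles; and on $\partial\Omega$ set every HCT dof to zero, forcing $E_h v_2\in H^2_0(\Omega)$. Then on each $T\in\tT$, the difference $(v_2-E_hv_2)|_T$ is a linear combination of the scaled HCT basis functions with coefficients equal to the differences between $v_2|_T$'s dofs and the averaged dofs, and each such coefficient is a convex combination of jumps of $v_2$, of $\nabla v_2$, or of $\dashint_E(v_2)_\bn\,ds$ across the $\eE$-edges incident to the dof.

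Using that jumps of polynomials on an edge satisfy inverse/trace estimates such as $|[v_2]_E(z)|\lesssim h_E^{-1/2}\|[v_2]_E\|_{L^2(E)}$ and $|[\nabla v_2]_E(z)|\lesssim h_E^{-3/2}\|[v_2]_E\|_{L^2(E)}+h_E^{-1/2}\|[(v_2)_\bn]_E\|_{L^2(E)}$, together with the $h_T^{-2}$ scaling of $|\cdot|_{2,T}$ of the HCT basis, summation over $T\in\tT$ and a re-grouping by edges yields a bound of the form \eqref{P} but with the right-hand sum initially running over all of $\eE$. The reduction from $\eE$ to $\e$ rests on the fact that in the intended use of the lemma in Theorem~\ref{rel} one has $v_2=Gu_h\in\p_2(\T)\subset\p_2(\tT)$, which is a single polynomial on each $P\in\T$, so every jump of $v_2$ across $\eE(P)\setminus\e(P)$ vanishes identically.

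The main obstacle is the careful bookkeeping at the interior vertex $z_0\in\widehat{\mathcal{V}}(P)\setminus\mathcal{V}(P)$ where up to $M(\rho)$ triangles of $\tT(P)$ meet; one must verify that each dof gap for $E_hv_2$ at $z_0$ telescopes (going around $z_0$) into a sum of pointwise jumps across the $\eE$-edges incident to $z_0$, a routine but notation-heavy exercise that mirrors the construction of Georgoulis--Houston--Virtanen \cite{georgoulis2011posteriori}. That reference can in fact be invoked directly on the shape-regular sub-triangulation $\tT$ to produce $E_h$ together with the desired estimate, with constants depending only on $\rho$.
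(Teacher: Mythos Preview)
Your proposal is correct and matches the paper's approach: the paper simply cites \cite[Lemma~3.1]{georgoulis2011posteriori} on the shape-regular sub-triangulation $\tT$ to obtain the bound with the sum over all of $\eE$, notes that the constant $C_a$ depends only on $\rho$ via the shape-regularity of $\tT$, and then reduces the sum from $\eE$ to $\e$ by observing that the relevant $v_2$ (namely $v_2|_P\in H^2(P)$, in practice $v_2=Gu_h\in\p_2(\T)$) has vanishing jumps across the interior edges $\eE(P)\setminus\e(P)$. Your more detailed sketch of the HCT averaging construction is a faithful outline of what the cited reference does, but the paper itself does not reproduce those details.
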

\begin{proof}
	There exists a positive constant $C_a$ (that exclusively depends on $\rho$) such that any $v_2\in\p_2(\tT)$ and its enrichment $E_hv_2\in H^1_0(\Omega)$ from  \cite[Lemma~3.1]{georgoulis2011posteriori} satisfy
	\begin{align*}
	|v_2-E_hv_2|^2_{2,\tT}\leq C_a^{2}\sum_{E\in\eE}\Big(h_E^{-3}\|[v_2]_E\|_{L^2(E)}^2+h_E^{-1}\|[(v_2)_{\bn}]_E\|_{L^2(E)}^2\Big).
	\end{align*}
	The constant $C_a$ depends on the shape-regularity of the sub-triangulation $\tT$ from Subsection~2.1 and so depends on $\rho$. Since any edge $E\in \e$ is unrefined in the sub-triangulation $\tT$,  the above inequality reduces to (\ref{P}) for any $v_{2|P}\in H^2(P)$ and $P\in\T$. This concludes  the proof.
\end{proof}
\noindent The composition $E_h\circ G:V_h\to V$ connects a given function $v_h\in V_h$ to a conforming function,
\tikzstyle{line} = [draw, -latex']
\begin{figure}[H]
	\begin{center}
		\begin{tikzpicture}[node distance = 1.5cm, auto]
		\node(step1){$V_h$};
		\node[right of =step1,node distance=4cm](step2){$\p_2(\T)\hookrightarrow\p_2(\tT)$};
		\node[right of = step2,node distance=4cm](step3){$V$};
		\path[line](step2)--node[above,text centered]{$E_h$}(step3);
		\path[line](step1)--node[above,text centered]{$G$}(step2);
		\end{tikzpicture}
	\end{center}
\end{figure}
\vspace{-1cm}

\begin{proof}[Proof of  Theorem~\ref{rel} for $m=2$]
	Let $e:=u-E_h Gu_h \in V=H^2_0(\Omega)$. The scalar product $a(\cdot,\cdot)$  and the continuous problem \eqref{3} lead to
	$
	|u-E_hGu_h|^2_{2,\Omega}= a(u-E_hGu_h,e)=F(e)-a(E_hGu_h,e).
	$
	Recall $a_\pw(Gu_h,GI_he)=a_\pw(Gu_h,I_he)=a_\pw(Gu_h,e)$ from \eqref{pid1} and Theorem~\ref{2.10}.a. This and the discrete problem \eqref{dp} result in
	\begin{align}
	|u-E_hGu_h|^2_{2,\Omega}=F(e)-F(QI_he)+a_\pw(Gu_h-E_hGu_h,e)+s_h(u_h,I_he).\label{3.12}
	\end{align}
	\textit{Estimate of $F(e)-F(QI_he)$}. \textit{Case 1 ($Q=G$)}. Theorem~\ref{2.10}.b and Lemma~\ref{lem2.1} show   \[\|e-GI_he\|_{L^2(P)}\leq C_{\PF}h_P^2|e-Ge|_{2,P}\leq C_\PF h_P^2|e|_{2,P}.\] 
	\textit{Case 2 ($Q=J$)}.  A triangle inequality and Theorem~\ref{J}.c lead to  \[\|e-JI_he\|_{L^2(P)}\leq \|e-I_he\|_{L^2(P)}+C_Jh_P^2(|I_he-GI_he|_{2,P}+|I_he|_{2,P}). \] Theorem~\ref{2.10}.c and Lemma~\ref{lem2.1} show $|I_he-GI_he|_{2,P}+|I_he|_{2,P}\leq 2C_{\text{Ib}}|e|_{2,P}$. The previous  estimates and Theorem~\ref{2.10}.d result in $\|e-JI_he\|_{L^2(P)}\leq (C_{\text{I}}+2C_{J}C_{\text{Ib}})h_P^2|e|_{2,P}.$  A Cauchy-Schwarz inequality in the right-hand side of $F(e)-F(QI_he)=(f,e-QI_he)_{L^2(\Omega)}$ and the above estimates in case $Q=G$ or $Q=J$  provide an estimate for  each $P\in\T$. Their sum reads
	\begin{align}	F(e)-F(QI_he)\leq (C_{\PF}+C_{\text{I}}+2C_{J}C_{\text{Ib}})\eta_\T|e|_{2,\Omega}.
	\end{align} 
	\textit{Estimate of $a_\pw(Gu_h-E_hGu_h,e)$}. A Cauchy-Schwarz inequality and Lemma~\ref{en} lead to
    \begin{align}
	a_\pw(Gu_h-E_hGu_h,e)\leq C_{a}\Xi_\T|e|_{2,\Omega}.
	\end{align}
	 \textit{Estimate of $s_h(u_h,I_he)$}. A Cauchy-Schwarz inequality and \eqref{stab}  result in
	\begin{align}
	s_h(u_h,I_he)\leq C_s^{1/2}s_h^{1/2}(u_h,u_h)|(1-G)I_he|_{2,\pw}\leq C_s^{1/2}C_{\text{Ib}}s_h^{1/2}(u_h,u_h)|e|_{2,\Omega}\label{3.18s}
	\end{align}
	with $|(1-G)I_he|_{2,\pw}\leq |I_he|_{2,\pw}\leq C_{\text{Ib}}|e|_{2,\Omega}$ from Lemma~\ref{lem2.1} and Theorem~\ref{2.10}.c in the last step.\\
	\textit{Estimate of $|u-Gu_h|_{2,\pw}$}. The combination of \eqref{3.12}-\eqref{3.18s} shows  $C_5:=C_{\text{Ip}}+C_{\text{I}}+2C_{J}C_{\text{Ib}}+C_a+C_s^{1/2}C_{\text{Ib}}$ in
	$
	|u-E_hGu_h|_{2,\Omega}\leq C_5 \mu_\T.
	$
	This, Lemma~\ref{en}, and a triangle inequality for the error $u-Gu_h=(u-E_hGu_h)+(E_hGu_h-Gu_h)$   reveal
	\begin{align}|u-Gu_h|_{2,\pw}\leq (C_5+C_a)\mu_\T.\label{4.8a}\end{align}
	\textit{Estimate of $|u-u_h|_{2,\pw}$}. Recall  \eqref{sb} in the form $C_s^{1/2}|u_h-Gu_h|_{2,\pw}\leq \zeta_\T.$ This, \eqref{4.8a}, and a triangle inequality lead to \[|u-u_h|_{2,\pw}\leq |u-Gu_h|_{2,\pw}+|u_h-Gu_h|_{2,\pw}\leq (C_5+C_a+C_s^{1/2})\mu_\T.\] This and \eqref{4.8a} verify \eqref{5.21} for $m=2$ with $C_{\mathrm{r2}}:=\sqrt{3}(2(C_5+C_a)+C_s^{1/2})$.
\end{proof}
\begin{proof}[Proof of  Theorem~\ref{rel} for $m=1$]
 Let $\Psi\in V=H^2_0(\Omega)$ solve the dual problem $a(v,\Psi) = (\Delta(u-J'I_\M u_h),v)_{L^2(\Omega)}$ for all $v\in V$. Elliptic regularity \eqref{2.14} provides $\Psi\in H^{2+\sigma}(\Omega)$ and the estimate 
	\begin{align}
	\|\Psi\|_{2+\sigma,\Omega} \leq C_{\text{reg}}|u-J'I_\M u_h|_{1,\Omega}.\label{3.18a}
	\end{align}
	The test function $v=u-J'I_\M u_h\in V$ in the dual problem shows
	\begin{align}
	|u-J'I_\M u_h|_{1,\Omega}^2 =a(u-J'I_\M u_h,\Psi)=F(\Psi)-F(QI_h\Psi)+a_h(u_h,I_h\Psi)-a(J'I_\M u_h,\Psi)\label{6.11}
	\end{align}
	with the continuous problem \eqref{3} and the discrete problem \eqref{dp} in the last step. Theorem~\ref{2.10}.b provides $GI_h\Psi=G\Psi$ and  \eqref{pid1} shows $a_\pw(Gu_h,GI_h\Psi)=a_\pw(Gu_h,\Psi)$. Notice that $a_\pw(Gu_h-J'I_\M u_h,G\Psi)=a_\pw(Gu_h-u_h,G\Psi)+a_\pw(u_h-I_\M u_h,G\Psi)+a_\pw(I_\M u_h-J'I_\M u_h,G\Psi)=0 $ follows from \eqref{pid1}, \eqref{m1}, and Lemma~\ref{J'}.c. This and \eqref{6.11} result in
	\begin{align}
	|u-J'I_\M u_h|_{1,\Omega}^2&=
	F(\Psi)-F(QI_h\Psi)+a_\pw(Gu_h-J'I_\M u_h,\Psi-G\Psi)+s_h(u_h,I_h\Psi).\label{6.13}
	\end{align}
	\textit{Estimate of $F(\Psi)-F(QI_h\Psi)$}. \textit{Case1 ($Q=G$)}. Theorem~\ref{2.10}.b and Lemma~\ref{lem2.1}  show  that \[\|\Psi-GI_h\Psi\|_{L^2(P)}\leq C_{\PF}C_{\text{apx}}h_P^{2+\sigma}|\Psi|_{2+\sigma,P}.\]
	\noindent\textit{Case 2 ($Q=J$)}. A triangle inequality and Theorem~\ref{J}.c (with $v=\Psi$) reveal  that \begin{align*}\|\Psi-JI_h\Psi\|_{L^2(P)}&\leq \|\Psi-I_h\Psi\|_{L^2(P)}+C_Jh_P^2(|I_h\Psi-GI_h\Psi|_{2,P}+|I_h\Psi-\Psi|_{2,P})\\&\leq (C_{\text{I}}+C_J(2C_{\text{I}}+C_{\text{apx}}))h_{P}^{2+\sigma}|\Psi|_{2+\sigma,P}
	\end{align*} with Theorem~\ref{2.10}.d and  \eqref{5.20} in the last step. The two estimates for $\|\Psi-QI_h\Psi\|_{L^2(P)}$ (for $Q=G$ and $Q=J$)  and the Cauchy-Schwarz inequality $\int_P f(\Psi-QI_h\Psi)\,dx\leq \|f\|_{L^2(P)}\|\Psi-QI_h\Psi\|_{L^2(P)}$  prove an estimate for each $P\in\T$. The sum of all those provides
	\begin{align}F(\Psi)-F(QI_h\Psi)\leq (C_{\PF}C_{\text{apx}}+C_{\text{I}}+C_J(2C_{\text{I}}+C_{\text{apx}})) |\Psi|_{2+\sigma,\Omega}\sum_{P\in\T}h_P^{\sigma}\eta_P\label{3.19}.\end{align}
	\textit{Estimate of $a_\pw(Gu_h-J'I_\M u_h,\Psi-G\Psi)$}.
	A triangle inequality and   Lemma~\ref{J'}.d (with $v=E_hGu_h$) imply $|Gu_h-J'I_\M u_h|_{2,\pw}\leq |Gu_h-I_\M u_h|_{2,\pw}+C_{J'} |I_\M u_h-E_hGu_h|_{2,\pw}$. This and $|Gu_h-I_\M u_h|_{2,\pw}\leq |u_h-Gu_h|_{2,\pw}$ from \eqref{m1} lead to
	\begin{align}
	|Gu_h-J'I_\M u_h|_{2,\pw}&\leq(1+C_{J'})|u_h-Gu_h|_{2,\pw}+ C_{J'}|Gu_h-E_hGu_h|_{2,\pw}\nonumber\\&\leq(1+C_{J'})C_s^{1/2}\zeta_\T +C_{J'}C_a\Xi_\T\label{3.19b}
	\end{align}
	with  \eqref{sb} and Lemma~\ref{en} in the last step. A Cauchy-Schwarz inequality, \eqref{3.19b}, and Lemma~\ref{lem2.1} prove with $C_6:=C_{\text{apx}}((1+C_{J'})C_s^{1/2}+C_{J'}C_a)$ that
	\begin{align}
	a_\pw(Gu_h-J'I_\M u_h,\Psi-G\Psi)\leq C_6|\Psi|_{2+\sigma,\Omega}\sum_{P\in\T}h_P^\sigma(\zeta_P+\Xi_P).\label{4.13}
	\end{align}
	\textit{Estimate of $s_h(u_h,I_h\Psi)$}. Argue as in \eqref{3.18s} for the stability term and proceed with $|(1-G)I_h\Psi|_{2,\pw}\leq |I_h\Psi-\pid\Psi|_{2,\pw}$ from Theorem~\ref{2.10}.b to deduce that
	\begin{align}
	s_h(u_h,I_h\Psi)\leq C_s^{1/2} \zeta_\T |I_h\Psi-\pid\Psi|_{2,\pw}\leq C_s^{1/2}(C_{\text{I}}+C_{\text{apx}})|\Psi|_{2+\sigma,\Omega}\sum_{P\in\T}h_P^\sigma\zeta_P\label{3.19c}
	\end{align}
	with a triangle inequality, Theorem~\ref{2.10}.d, and Lemma~\ref{lem2.1} in the last step.\\
	\textit{Estimate of $|u-Gu_h|_{1,\pw}$}. The combination of \eqref{6.13}-\eqref{3.19} and \eqref{4.13}-\eqref{3.19c}  provides 
	\begin{align}
	|u-J'I_\M u_h|_{1,\Omega}^2\leq C_7|\Psi|_{2+\sigma,\Omega}\sum_{P\in\T}h_P^{\sigma}\mu_P\label{3.22}
	\end{align}
	with  $C_7:=C_{\PF}C_{\text{apx}}+C_{\text{I}}+C_J(2C_{\text{I}}+C_{\text{apx}})+C_6+C_s^{1/2}(C_{\text{I}}+C_{\text{apx}})$. Argue as in \eqref{jv} to prove $\int_{\partial P}\nabla (J'I_\M u_h-u_h)\,ds=0$ so that the Poincar\'e-Friedrichs inequality from Theorem~\ref{PF}.a applies. This and  the triangle inequality  $|u-Gu_h|_{1,\pw}\leq |u-J'I_\M u_h|_{1,\Omega}+|J'I_\M u_h-Gu_h|_{1,\pw}$ result in
	\[
	|u-Gu_h|_{1,\pw}\leq |u-J'I_\M u_h|_{1,\Omega}+C_\PF|h_\T(J'I_\M u_h-Gu_h)|_{2,\pw}.
	\]
	  The regularity estimate \eqref{3.18a},  \eqref{3.22}, and \eqref{3.19b}  lead  in the previous displayed estimate to
	\begin{align*}
	|u-Gu_h|_{1,\pw}\leq C_{\text{reg}}(C_7+C_\PF((1+C_{J'})C_s^{1/2}+C_{J'}C_a))\sum_{P\in\T}h_P^\sigma\mu_P.
	\end{align*}
	\textit{Estimate of $|u-u_h|_{1,\pw}$}. The triangle inequality $|u-u_h|_{1,\pw}\leq |u-Gu_h|_{1,\pw}+|Gu_h-u_h|_{1,\pw}$, Lemma~\ref{lem2.1}, and \eqref{sb}  conclude the proof of \eqref{5.21} for $m=1$  with $C_{\mathrm{r1}}:=\sqrt{3}(2C_{\text{reg}}(C_7+C_\PF((1+C_{J'})C_s^{1/2}+C_{J'}C_a)+C_s^{1/2})$.
\end{proof}
Let $z\in{\mathcal{V}}$ be a vertex in $\T$ with the neighbouring polygons $\T(z):=\{P'\in\T: z\in P'\}$ and define the vertex patch $\omega_z := \text{int}(\cup\T(z))$  and the larger neighbourhood $\Omega(P):= \cup_{z\in{\mathcal{V}}(P)}\omega_z$. The edge patch $\T(E):=\{P'\in\T:E\subset\partial P'\}$  consists of  one or  two neighbouring polygons  that share an edge $E\in\e$ and this defines $\omega(E):=\text{int}(\cup\T(E))$.
\begin{theorem}[local efficiency up to data  oscillations]\label{efficiency}
	For any $P\in\T$ it holds
	\begin{align}
	\zeta_P^2&\lesssim |u-u_h|^2_{2,P}+|u-Gu_h|^2_{2,P},\label{lb}\\
	\eta_P^2&\lesssim |u-Gu_h|^2_{2,P}+\mathrm{osc}_2^2(f,P),\label{lb1}\\
	\Xi_P^2&\lesssim \sum_{E\in\e(P)}\sum_{P'\in\Omega(\omega(E))}( |u-u_h|^2_{2,P'}+|u-Gu_h|^2_{2,P'}).\label{lb2}
	\end{align}
\end{theorem}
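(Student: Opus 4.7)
The three constituents $\zeta_P$, $\eta_P$, $\Xi_P$ are controlled separately. The stabilization bound~\eqref{lb} is immediate from Lemma~\ref{lems}, and the volume residual~\eqref{lb1} goes by a standard polynomial bubble function argument. The jump estimate~\eqref{lb2} is the main obstacle and will be handled via the conforming companion from Theorem~\ref{J}.

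For~\eqref{lb}, Lemma~\ref{lems} gives $\zeta_P^2 \leq C_s |u_h - Gu_h|_{2,P}^2$, and a triangle inequality on $u_h - Gu_h = (u_h - u) + (u - Gu_h)$ concludes. For~\eqref{lb1}, I test with $v := b_P \Pi_2 f \in H^2_0(P) \hookrightarrow V$ using the bubble $b_P$ from~\eqref{bubble}. The inverse estimate~\eqref{b1} gives $\|\Pi_2 f\|^2_{L^2(P)} \lesssim (b_P \Pi_2 f, \Pi_2 f)_{L^2(P)} = (f, v)_{L^2(P)} - (f - \Pi_2 f, v)_{L^2(P)}$. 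The key simplification is that $(f, v)_{L^2(P)} = a(u, v) = a^P(u - Gu_h, v)$: the first identity is the continuous problem~\eqref{3} with $v$ extended by zero, and the second holds because $D^2 Gu_h$ is constant on $P$ while $\nabla v$ vanishes on $\partial P$, so $\int_P D^2 Gu_h : D^2 v\,dx = 0$. A Cauchy--Schwarz inequality with the inverse estimates $|v|_{2,P} \lesssim h_P^{-2}\|\Pi_2 f\|_{L^2(P)}$ and $\|v\|_{L^2(P)} \lesssim \|\Pi_2 f\|_{L^2(P)}$ from~\eqref{b1}--\eqref{b2} yields $h_P^2 \|\Pi_2 f\|_{L^2(P)} \lesssim |u - Gu_h|_{2,P} + \mathrm{osc}_2(f, P)$. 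A final triangle inequality $\|f\|_{L^2(P)} \leq \|\Pi_2 f\|_{L^2(P)} + \|f - \Pi_2 f\|_{L^2(P)}$ produces~\eqref{lb1}.

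For~\eqref{lb2}, neither $u - u_h$ nor $u - Gu_h$ admits a Poincar\'e-type control that would make a plain trace argument suffice, because the Galerkin projection $G$ depends on $u_h$ rather than on $u$. The remedy is to exploit that $Ju_h \in V = H^2_0(\Omega)$, so $[Ju_h]_E = 0$ and $[(Ju_h)_\bn]_E = 0$ across each interior edge; hence $[Gu_h]_E = [Gu_h - Ju_h]_E$ and $[(Gu_h)_\bn]_E = [(Gu_h - Ju_h)_\bn]_E$. A scaled trace inequality on each $P' \in \T(E)$ controls $h_E^{-3}\|[Gu_h - Ju_h]_E\|_{L^2(E)}^2 + h_E^{-1}\|[(Gu_h - Ju_h)_\bn]_E\|_{L^2(E)}^2$ by $\sum_{m=0}^2 h_{P'}^{2(m-2)} |Gu_h - Ju_h|^2_{m,P'}$. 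Theorem~\ref{J}(c) with the choice $v = u$ in the minimum and the triangle inequality $|u_h - Gu_h|_{2,\pw} \leq |u - u_h|_{2,\pw} + |u - Gu_h|_{2,\pw}$ furnishes exactly a bound of this quantity by a sum of $|u - u_h|_{2,P''}^2 + |u - Gu_h|_{2,P''}^2$. Summing over $P'\in\T(E)$ and $E \in \e(P)$ then delivers~\eqref{lb2}.

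The principal obstacle I anticipate is the localisation of Theorem~\ref{J}(c), which is stated as a global estimate. Its proof decomposes $Ju_h = J'v_\M + \sum_P b_P v_\T|_P$, with the bubble correction $b_P v_\T|_P$ supported in the single polygon $P$ and with the Morley companion $J'$ itself constructable locally on a vertex patch. I therefore plan to revisit that proof to track the locality: the upper bounds in Theorem~\ref{J}(c) can then be restricted to a vertex neighbourhood of $P'$, producing $|Gu_h - Ju_h|_{m,P'} \lesssim h_{P'}^{2-m}\bigl(|u - u_h|_{2,\Omega(P')}+|u - Gu_h|_{2,\Omega(P')}\bigr)$. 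This is why~\eqref{lb2} uses the enlarged patch $\Omega(\omega(E))$ rather than $\omega(E)$ itself.
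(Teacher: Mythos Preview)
Your proofs of \eqref{lb} and \eqref{lb1} match the paper's. For \eqref{lb2} your strategy is sound but genuinely different from the paper's, and the difference is worth noting.

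The paper does \emph{not} use the VEM companion $J$ from Theorem~\ref{J} here. For the function jump it writes $[Gu_h]_E=[Gu_h-J'I_\M u_h]_E$ with the \emph{Morley} companion $J'$ applied to the Morley interpolant $I_\M u_h$; after a trace inequality and the decomposition $Gu_h-J'I_\M u_h=(Gu_h-u_h)+(u_h-I_\M u_h)+(I_\M u_h-J'I_\M u_h)$ it invokes a ready-made \emph{local} version of Lemma~\ref{J'}.d from \cite[Lemma~5.1]{carstensen2018prove}, so no localisation has to be carried out from scratch. For the normal-derivative jump the paper avoids companions altogether: it sets $\alpha_E:=\dashint_E(u-u_h)_\bn\,ds$ (well defined because $u-u_h\in V_\nc$), rewrites $[(Gu_h)_\bn]_E=[(Gu_h-u)_\bn+\alpha_E]_E$, splits via $u_h$, and uses Theorem~\ref{PF}.a on $(u_h-u)_\bn+\alpha_E$ (which has zero edge mean) to obtain a bound supported only on $\omega(E)$.

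Your route via $J$ is cleaner conceptually---one tool for both jumps---but the price is exactly the localisation of Theorem~\ref{J}(c) that you flag. That localisation is feasible (the bubble correction is element-local and $J'$ has the cited local estimate), so your argument closes; however it re-derives something the paper sidesteps by quoting \cite{carstensen2018prove} directly and by the elementary $\alpha_E$ trick. The paper's approach also yields a slightly tighter patch for the normal-derivative term ($\omega(E)$ rather than $\Omega(\omega(E))$), though the final stated bound \eqref{lb2} uses the larger patch anyway because of the function-jump term.
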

\begin{proof}[Proof of \eqref{lb}] The upper bound in \eqref{stab}  and a triangle inequality lead to 
	\[
	\zeta_P^2\leq C_s|(1-G)u_h|^2_{2,P}\leq 2C_s(|u-u_h|^2_{2,P}+|u-Gu_h|^2_{2,P}). \qedhere
	\]
\end{proof}	
\begin{proof}[Proof of \eqref{lb1}] Abbreviate  $\theta_P:=(f-\Pi_2f)|_P$, recall the bubble-function $b_P$  from \eqref{bubble}, and substitute $v=b_P\Pi_2f\in V$ in \eqref{3} to obtain
	\begin{align*}
	(\Pi_2f,v)_{L^2(P)}=a^P(u,v)-(\theta_P,v)_{L^2(P)}=a^P(u-Gu_h,v)-(\theta_P,v)_{L^2(P)}
	\end{align*}
	with $a^P(Gu_h,v)=a^P(Gu_h,Gv)=0$ from \eqref{pid1} and $Gv=0$ (from Lemma~\ref{lem2.2} for $\text{Dof}(v)=0$)  in the last step. This and  Cauchy-Schwarz inequalities  show
	\begin{align}
	(\Pi_2f,v)_{L^2(P)}\leq |u-Gu_h|_{2,P}|v|_{2,P}+\|\theta_P\|_{L^2(P)}\|v\|_{L^2(P)}.\label{3.25}
	\end{align}
	The first inequality in \eqref{b1} shows $C_b^{-1}\|\Pi_2f\|_{L^2(P)}^2\leq (\Pi_2f,v)_{L^2(P)}$ and the second inequality in \eqref{b2} verifies $\sum_{m=0}^2h_P^m|v|_{2,P}\leq C_b\|\Pi_2f\|_{L^2(P)}$. Those estimates  prove in \eqref{3.25} that
	\begin{align*}
	C_b^{-2}h_P^2\|\Pi_2f\|_{L^2(P)}\leq |u-Gu_h|_{2,P}+\mathrm{osc}_2(f,P).
	\end{align*}
	This and the triangle inequality  $\|\eta_P^2\|_{L^2(P)}\leq\|h_P^2(f-\Pi_2f)\|_{L^2(P)}+\|h_P^2\Pi_2f\|_{L^2(P)}$ conclude the proof of \eqref{lb1}.
\end{proof}
\begin{proof}[Proof of \eqref{lb2}] Since $[Gu_h]_E = [Gu_h-J'I_\M u_h]_E$, the trace inequality   leads to
	\begin{align*}
	C_T^{-1}\|[Gu_h]_E\|_{L^2(E)}\leq h_E^{-1/2}\|Gu_h-J'I_\M u_h\|_{L^2(\omega(E))}+h_E^{1/2}\|\nabla_\pw(Gu_h-J'I_\M  u_h)\|_{L^2(\omega(E))}.
	\end{align*}
	Rewrite $Gu_h-J'I_\M u_h= (Gu_h-u_h)+(u_h-I_\M u_h)+(I_\M u_h-J'I_\M u_h)$. Abbreviate $|\cdot|_{2,\tT(P')}:=\sum_{T\in\tT(P')}|\cdot|_{2,T}$ for  $P'\in\T$.  Lemma~\ref{lem2.1}, \eqref{2.20}, and Lemma~\ref{J'}.d for $P'\in\T(E)$  lead to
	\begin{align}
	C_T^{-1}h_E^{-3/2}\|[Gu_h]_E\|_{L^2(E)}\leq (2+C_\PF) \sum_{P'\in\T(E)}(|u_h-Gu_h|_{2,P'} + |(1- J')I_\M u_h|_{2,\tT(P')}).\label{3.28}
	\end{align}
	 There exists  a local version of Lemma~\ref{J'}.d established in \cite[Lemma~5.1]{carstensen2018prove} with a positive constant $C_8$  (that exclusively depends on the shape regularity of $\tT$) such that
	\begin{align*}
	|(1-J')I_\M u_h|_{2,\tT(P')}\leq C_8\min_{v\in V}\|D^2_\pw(I_\M u_h-v)\|_{L^2(\Omega(P'))}\leq C_8 \|D^2_\pw(I_\M u_h-u)\|_{L^2(\Omega(P'))}.	
	\end{align*}
	A triangle inequality shows \[|(1-J')I_\M u_h|_{2,\tT(P')}\leq C_8(\|D^2_\pw(I_\M u_h-u_h)\|_{L^2(\Omega(P'))}+\|D^2_\pw(u_h-u)\|_{L^2(\Omega(P'))}).\]  This estimate for each $P'\in\T(E)$, $|u_h-I_\M u_h|_{2,\pw}\leq |u_h-Gu_h|_{2,\pw}$ from\eqref{m1},  \eqref{3.28}, and a triangle inequality imply
	\begin{align}
	h_E^{-3/2}\|[Gu_h]_E\|_{L^2(E)}\leq C_T(2+C_\PF)(1+C_8) \sum_{P'\in \Omega(\T(E))}(|u-u_h|_{2,P'}+|u-Gu_h|_{2,P'}).\label{p1}
	\end{align}
	It remains to control the term $h_E^{-1/2}\|[(Gu_h)_{\bn}]_E\|_{L^2(E)}$ for each $E\in\e(P)$.  Since  $u-u_h\in V_\nc$, $\alpha_E:=\dashint_E (u-u_h)_{\bn}\,ds\in\mathbb{R}$ is uniquely defined. Rewrite $[(Gu_h)_{\bn}]_E = [(Gu_h-u)_{\bn}+\alpha_E]_E$ for $E\in\e(P)$. The  triangle inequality $\|[(Gu_h-u)_{\bn}+\alpha_E]_E\|_{L^2(E)}\leq\|[(Gu_h-u_h)_\bn]_E\|_{L^2(E)}+\|[(u_h-u)_{\bn}+\alpha_E]_E\|_{L^2(E)}$ and the trace inequality  lead to
	\begin{align}
	h_E^{-1/2}\|[(Gu_h)_{\bn}]_E\|_{L^2(E)}&\lesssim (h_E^{-1}\|\nabla_\pw(Gu_h-u_h)\|_{L^2(\omega(E))}+\|D^2_\pw(Gu_h-u_h)\|_{L^2(\omega(E))})\nonumber\\&\quad+(h_E^{-1}\|(u_h-u)_{\bn}+\alpha_E\|_{L^2(\omega(E))}+\|D^2_\pw(u-u_h)\|_{L^2(\omega(E))}).\label{3.17}
	\end{align}
	    Since $\int_E((u_h-u)_{\bn}+\alpha_E)\,ds=0$, the Poincar\'e-Friedrichs inequality in Theorem~\ref{PF}.a applies to $\bn_E\cdot\nabla(u_h-u)+\alpha_E=f$ in  each $P'\in\T(E)$ and asserts $\|(u_h-u)_{\bn}+\alpha_E\|_{L^2(\omega(E))}\leq C_\PF h_P\|D^2_\pw(u_h-u)\|_{L^2(\omega(E))}$. This, $h_E\geq \rho h_{P'}$ from (M2), and Lemma~\ref{lem2.1} show 
	    \begin{align*}
	   h_E^{-1/2}\|[(Gu_h)_{\bn}]_E\|_{L^2(E)}\lesssim \|D^2_\pw(Gu_h-u_h)\|_{L^2(\omega(E))}+\|D^2_\pw(u-u_h)\|_{L^2(\omega(E))}.
	    \end{align*}
	     This and a triangle inequality result in
	\[
	h_E^{-1/2}\|[(Gu_h)_{\bn}]_E\|_{L^2(E)}\lesssim \sum_{P'\in\T(E)}(|u-u_h|_{2,P'}+|u-Gu_h|_{2,P'}).
	\qedhere
	\]
\end{proof}
\begin{remark}[efficiency of $H^1$ error control]
	The upper bounds in \eqref{lb}-\eqref{lb2} with a multiplication factor $h_P^{2\sigma}$ for $0<\sigma\leq 1$ in front of  $\mu_P^2$  show that the error estimators in \eqref{5.21} for $m=1$ converge (at least) with the expected convergence rate of the piecewise $H^1$ error. 
\end{remark}
%
\begin{remark}[higher-order ncVEM and 3D]
	The upper bound \eqref{5.21} for the error $|u-u_h|_{2,\pw}$  can be generalized to  ncVEM  of higher order $r\geq 3$ (see \cite{antonietti2018fully} for the discrete setting). The enrichment operator $E_h$ can be defined from the piecewise polynomial space $\p_r(\tT)$  to $H^2_0(\Omega)$ \cite{georgoulis2011posteriori} and the arguments  in this section hold in the three-dimensional case as well.
\end{remark}
\begin{remark}[conforming VEM]
	There are papers on the \textit{a priori} error estimates for the conforming VEM,  but there is no work on the \textit{a posteriori} VE analysis for  the biharmonic problem in the current literature. The analysis in Section~6 applies to the conforming case with $E_h\circ G =1$ and  $J'\circ I_\M =1$ in the proof of \eqref{5.21}  for $m=2$ and $m=1$. This establishes  the reliable and efficient \textit{a posteriori} error estimator $\eta_\T+\zeta_\T$ for the conforming VEM. 
\end{remark}
\begin{remark}[extensions]
	The source term $F$ is assumed to be an $L^2$ function in the main parts of this paper for simplicity and brevity. A class of more general sources $F\in H^{-2}(\Omega)$ is discussed in Theorem~\ref{err} only for the smoother $Q=J$ in the discrete problem and then avoids the data oscillations. More  examples on a class of right-hand sides $F$ are discussed in \cite{carstensen2021} and, in particular,  the \textit{a posteriori} error estimates can be generalized for this class of source terms as well. The arguments of \cite{carstensen2021} apply here as well and further details are omitted for brevity.   
\end{remark}
\section{Numerical results}
This section  discusses two numerical experiments with  uniform  and  adaptive mesh-refinement.   
\subsection{Adaptive algorithm}
A standard adaptive algorithm with the loop $\text{Solve}\; \to\;\text{Estimate}\;\to\;\text{Mark}\; \to\text{Refine}$
from \cite[Sec.~6.1]{carstensen2021priori}  is performed in two computational benchmarks.\\
\\
Step 1 (SOLVE).
Find the solution $u_h$ to \eqref{dp} for $Q=G$  in the right-hand side and compute the errors $H1e$ and $H2e$, $Hme:=|u-G u_h|_{m,\pw}$ for $m=1,2$, using polygauss quadrature rule \cite{product} for the input parameter $n=10$. \\
\\
Step 2 (ESTIMATE). Compute the local residuals  in Theorem~\ref{rel} and collect all these contributions for  $P\in\T$ to obtain the upper bound $H1\mu$ and $H2\mu$, $Hm\mu^2:= \sum_{P\in\T} h_P^{2\sigma(2-m)}\mu_P^2$ for the piecewise   $H^m$ error for $m=1,2$. Abbreviate the number of degrees of freedom by ndof. \\
\\
Step 3 (MARK). The D$\ddot{o}$rfler marking strategy \cite{16} detemines $\mathcal{D}_m\subset\T$ for $m=1,2$ with
\begin{align*}
Hm\mu^2\leq 0.5\sum_{P\in{\cal D}_m}h_P^{2\sigma(2-m)}\mu_P^2.
\end{align*}
\\
Step 4 (REFINE). A refinement strategy in VEM  divides the marked polygonal domains by connecting the mid-points of the edges to the centroid and allow  at most one hanging node per edge; cf. \cite{yu2021implementation} for further details  on a MATLAB implementation.

\subsection{Numerical example in L-shaped domain}
This subsection considers an L-shaped domain of Figure~7.1 with the exact solution of the model problem in polar co-ordinates $(r,\theta)$ 
\begin{align*}
u(r,\theta) = r^{5/3}\sin\Big(\frac{5\theta}{3}\Big)\quad\text{in}\;\Omega=(-1,1)^2\setminus [0,1)\times(-1,0].
\end{align*}
In this example, both $u$ and $u_{\bn}$ are not zero along the boundary $\partial\Omega$ and $f=0$. The upper bound for inhomogeneous boundary data can be established with minor modifications: The term $\Xi_P$ in the error estimator for $P\in\T$, which share a boundary edge, changes to
\begin{align*}
\Xi_P^2&=\sum_{E\in \e(P)\cap\e(\Omega)}\Big(\frac{1}{h_E^3}\|[Gu_h]_E\|_{L^2(E)}^2+\frac{1}{h_E}\|[(Gu_h)_\bn]_E\|_{L^2(E)}^2\Big)\\&\quad+\sum_{E\in \e(P)\cap\e(\partial\Omega)}\Big(\frac{1}{h_E^3}\|[Gu_h-u]_E\|_{L^2(E)}^2+\frac{1}{h_E}\|[(Gu_h-u)_\bn]_E\|_{L^2(E)}^2\Big).
\end{align*} Figure~7.1 displays strong local mesh-refinement at the re-entry corner in the adaptive mesh-refining. Figure~7.2 shows that uniform refinement yields the sub-optimal convergence rate, whereas adaptive refinements recover the optimal convergence rate.
\begin{figure}[H]
	\centering
	\begin{subfigure}{.33\textwidth}
		\centering
		\includegraphics[width=0.8\linewidth]{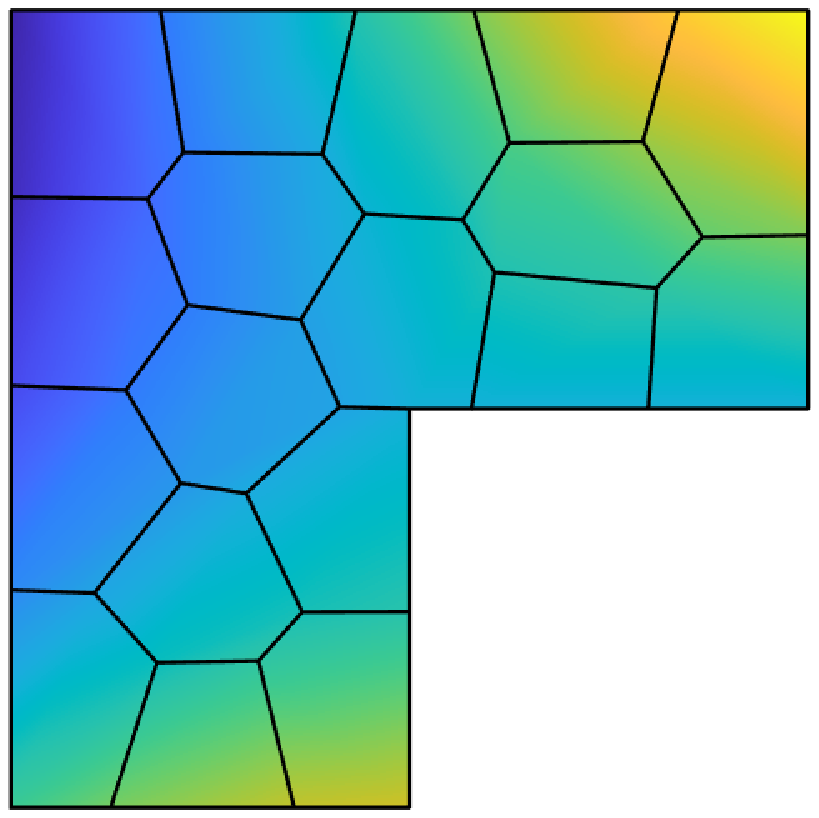}
	\end{subfigure}%
	\begin{subfigure}{.33\textwidth}
		\centering
		\includegraphics[width=0.8\linewidth]{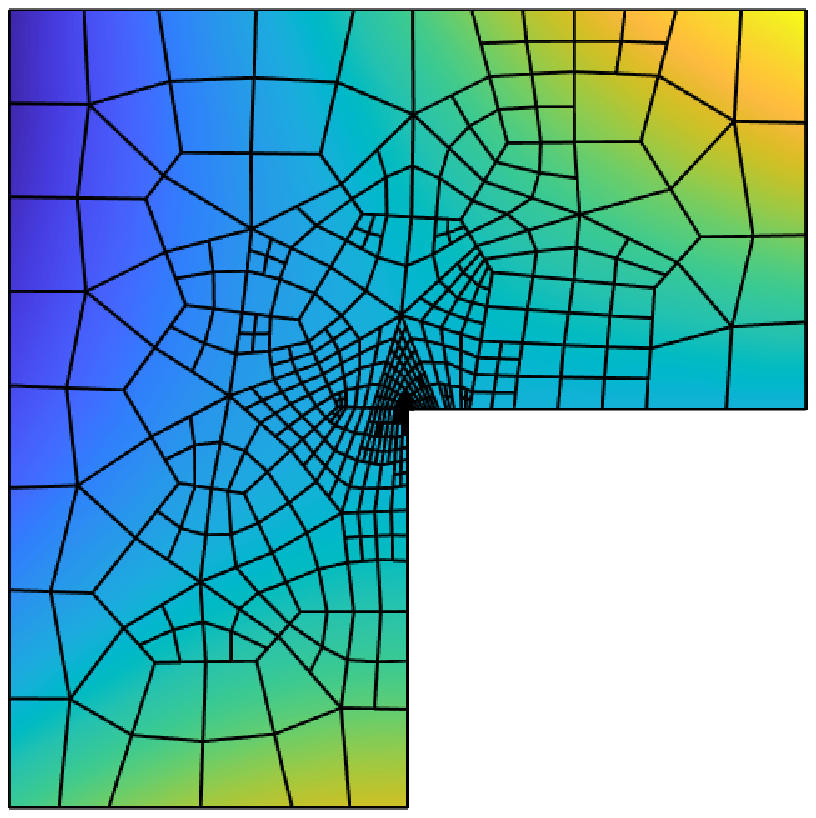}
	\end{subfigure}%
	\begin{subfigure}{.33\textwidth}
		\centering
		\includegraphics[width=0.8\linewidth]{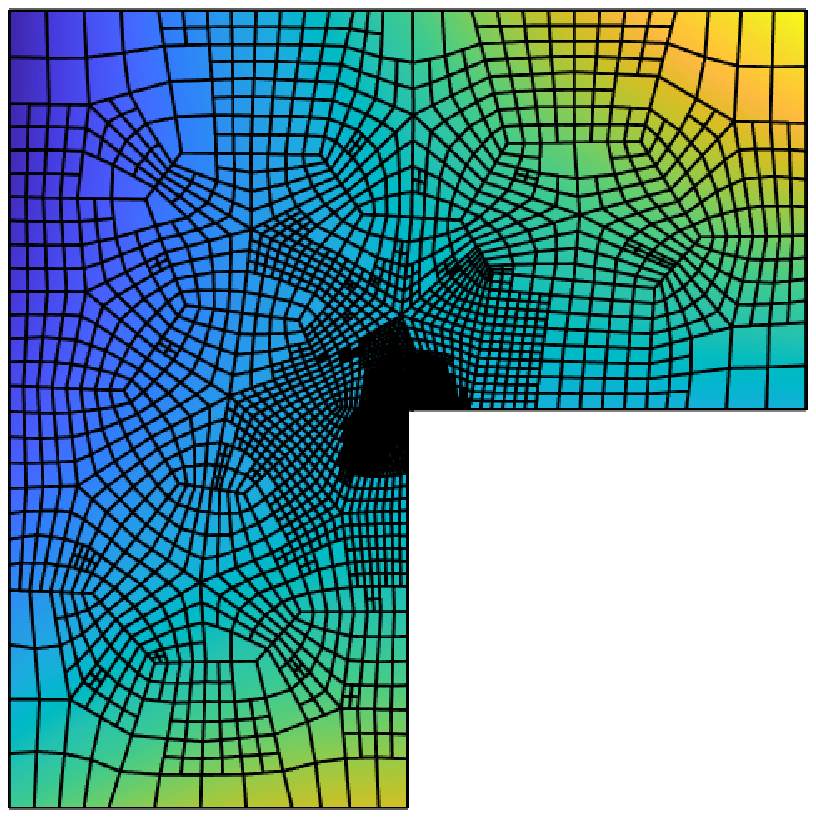}
	\end{subfigure}
	\caption{Output $\T_1, \T_{10}, \T_{15}$ of the adaptive algorithm in Subsection~7.2.}
	\label{fig4.2}
\end{figure}
\begin{figure}[H]
	\centering
	\begin{subfigure}{.5\textwidth}
		\centering
		\includegraphics[width=0.9\linewidth]{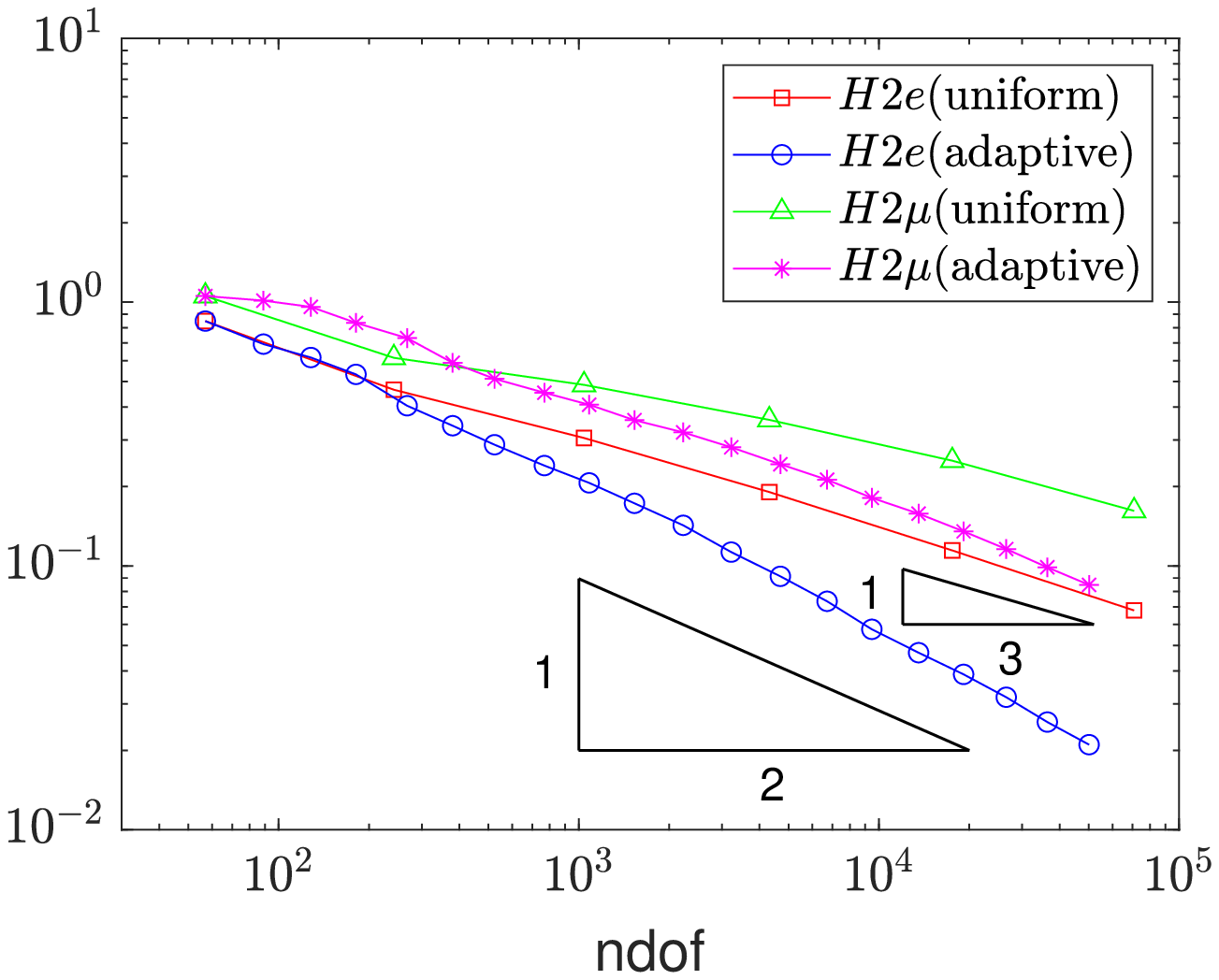}
	\end{subfigure}%
	\begin{subfigure}{.5\textwidth}
		\centering
		\includegraphics[width=0.9\linewidth]{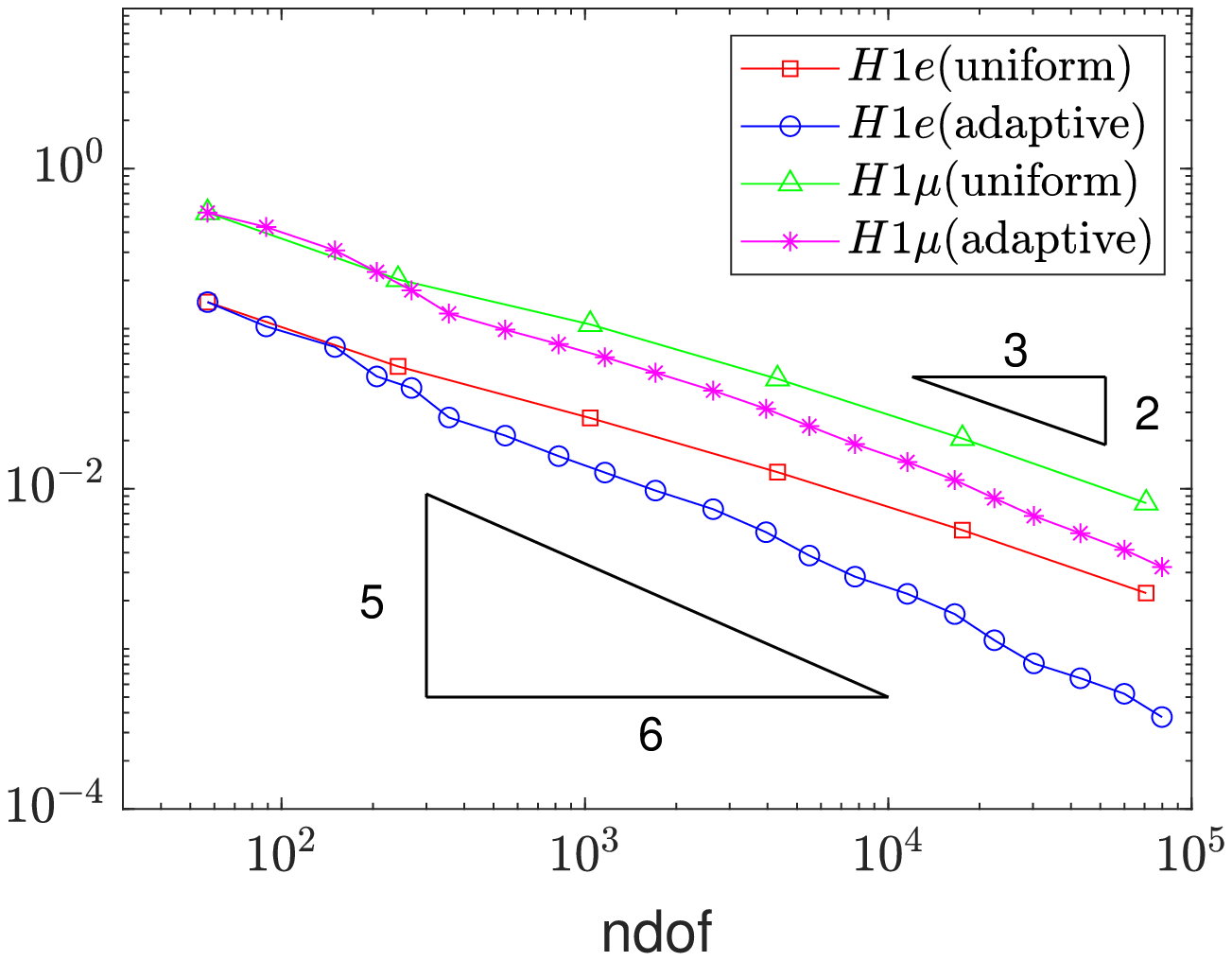}
	\end{subfigure}
	\caption{Convergence history plot of the errors resp. error estimators $H2e$ resp. $H2\mu$ (left) and $H1e$ resp. $H1\mu$ (right) vs  ndof for the L-shaped domain in Subsection~7.2.}
	\label{fig5.7}
\end{figure}
\subsection{Numerical example in Z-shaped domain}
The subsection considers the  polygonal domain $\Omega$ with the vertices $(0,0), (1,0), (1,1),(-1,1),$\\$(-1,-1),(1,-1)$  of Figure~7.3. Define the right-hand side function $f$  in the polar co-ordinates $(r,\theta)$ with the exact solution
\begin{align*}
u(r,\theta) = (1-r^2\cos^2(\theta))^2(1-r^2\sin^2(\theta))^2r^{(1+z)}g(\theta).
\end{align*}
Here $z = 0.505009698896589$ is a noncharacteristic root of $\sin^2(z\omega)=z^2\sin^2(\omega), \omega=7 \pi/4$ and $g(\theta)$ is as given in \cite[p.~107]{grisvard1992elliptic}. Figure~7.3 and 7.4  display the numerical results.

\begin{figure}[H]
	\centering
	\begin{subfigure}{.33\textwidth}
		\centering
		\includegraphics[width=0.73\linewidth]{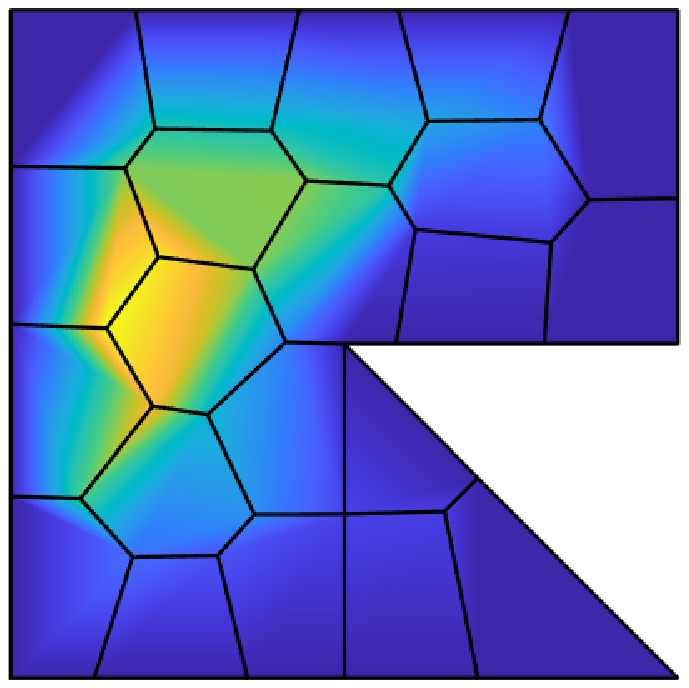}
	\end{subfigure}%
	\begin{subfigure}{.33\textwidth}
		\centering
		\includegraphics[width=0.8\linewidth]{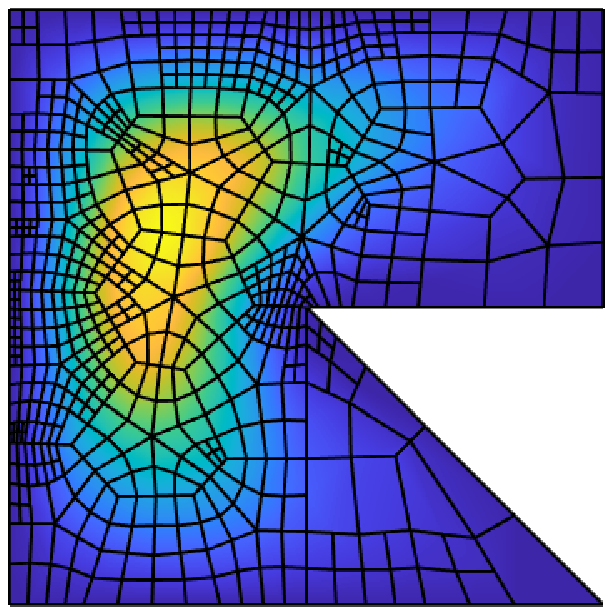}
	\end{subfigure}%
	\begin{subfigure}{.33\textwidth}
		\centering
		\includegraphics[width=0.8\linewidth]{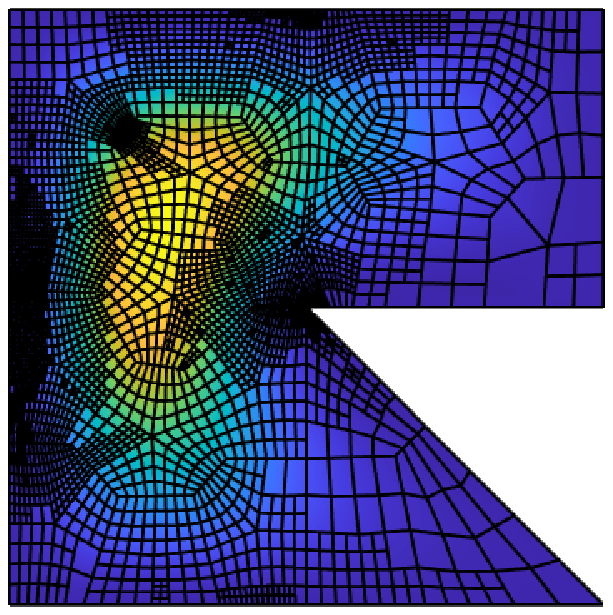}
	\end{subfigure}
	\caption{Output $\T_1, \T_{10}, \T_{15}$ of the adaptive algorithm in Subsection~7.3.}
	\label{fig4.3}
\end{figure}
\begin{figure}[H]
	\centering
	\begin{subfigure}{.5\textwidth}
		\centering
		\includegraphics[width=0.9\linewidth]{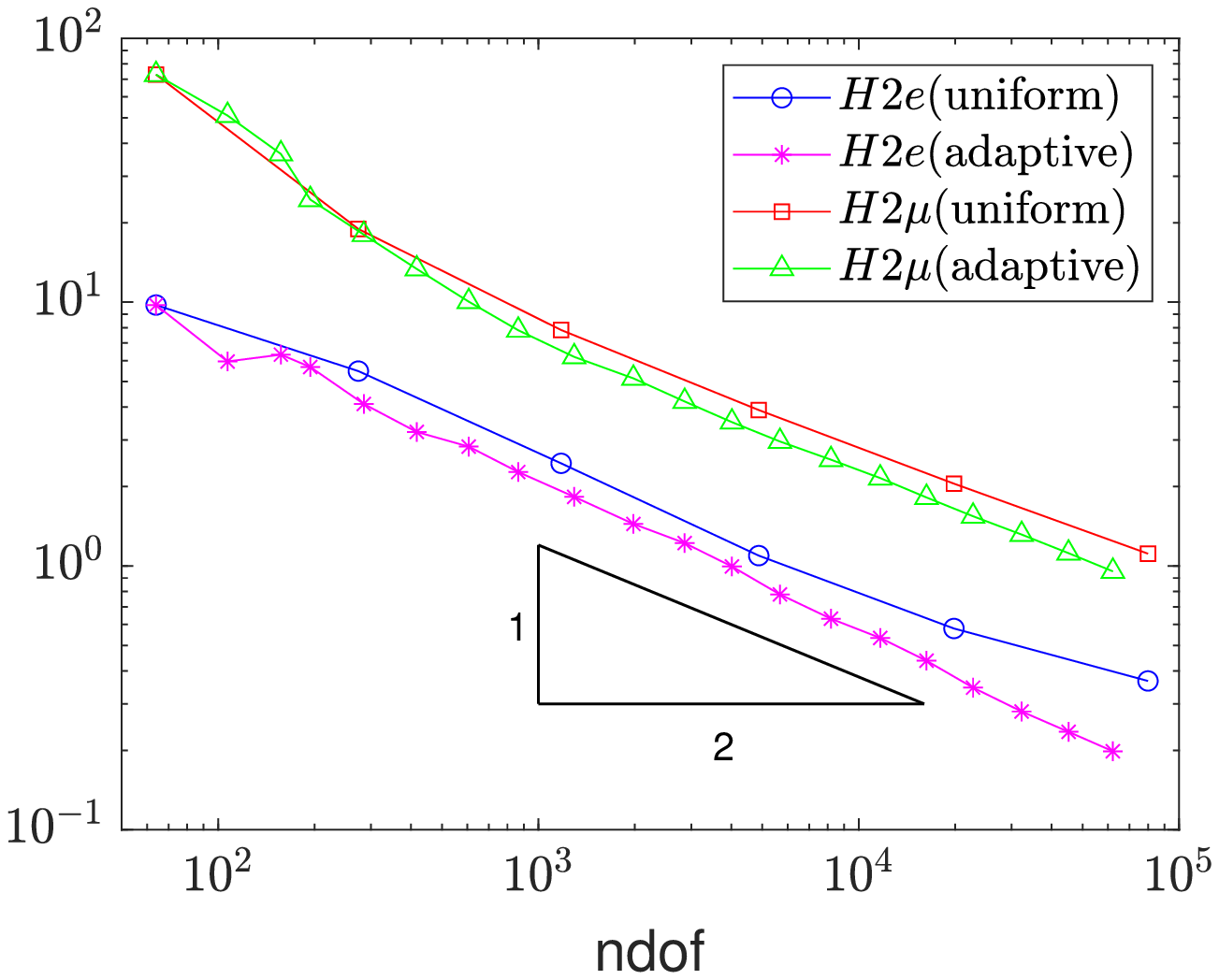}
	\end{subfigure}%
	\begin{subfigure}{.5\textwidth}
		\centering
		\includegraphics[width=0.9\linewidth]{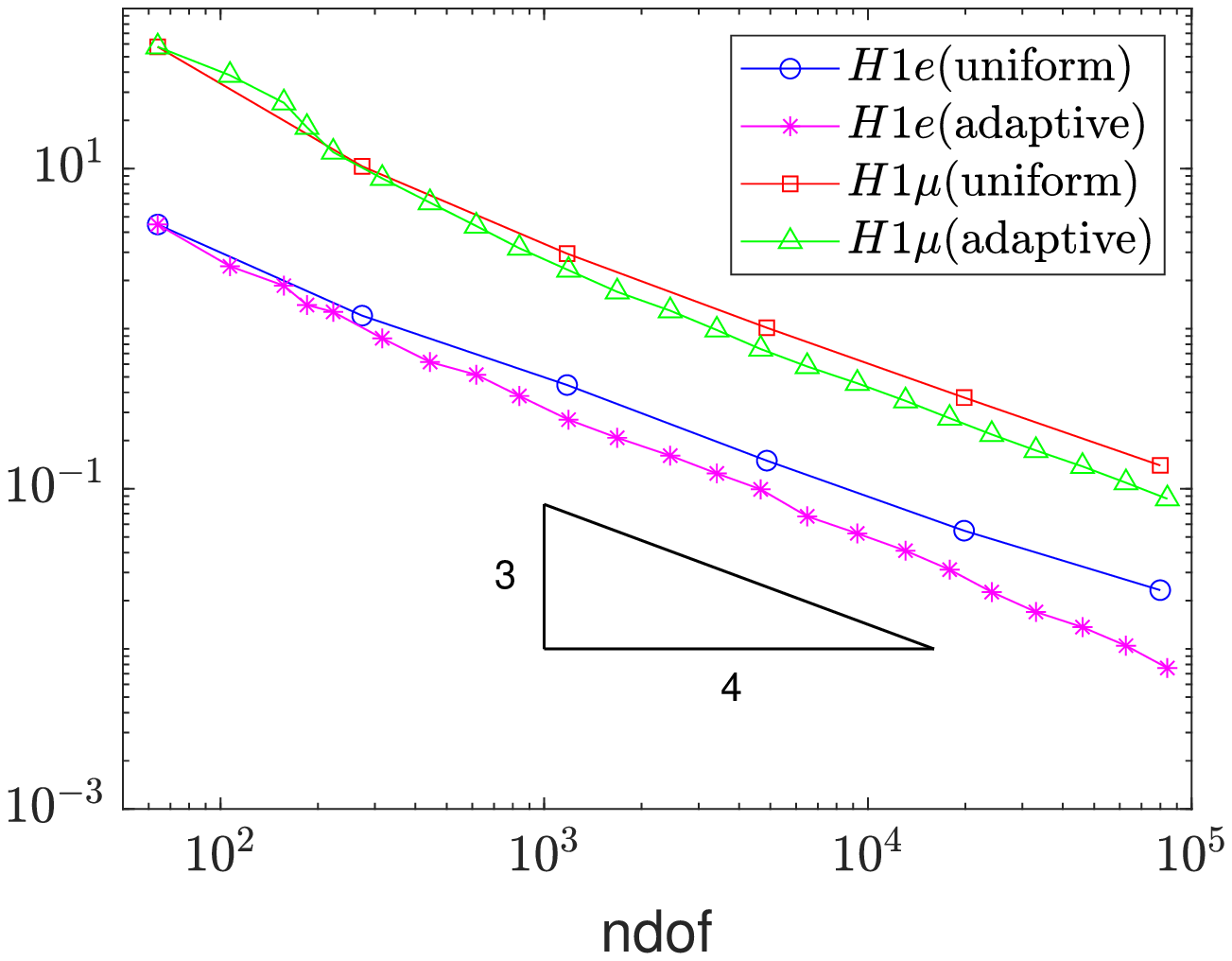}
	\end{subfigure}
	\caption{Convergence history plot of the errors resp. error estimators $H2e$ resp. $H2\mu$ (left) and $H1e$ resp. $H1\mu$ (right) vs  ndof for the Z-shaped domain in Subsection~7.3.}
	\label{fig4.6}
\end{figure}
\subsection{Evaluation}
\textit{Empirical convergence rates}. 
 The two domains from Subsection 7.2 resp. 7.3 have weak solutions $u\in H^{2+\sigma-\epsilon}(\Omega)$ for any $\epsilon>0$ with the typical corner singularity  for $\sigma=2/3$ resp. $\sigma=z=0.505$ and hence we expect and observe the empirical convergence rates in the $H^2$ norm (resp. $H^1$ norm)  $\sigma/2$ $(\text{resp.}\; \sigma)$ in terms of $\text{ndof}^{-1/2}$  for uniform mesh-refinements. The adaptive mesh-refining improves the empirical convergence rates to the optimal values $1/2$ (resp. $(1+\sigma)/2$).

\smallskip
\noindent\textit{Efficiency indices}. The ratio of the error estimator and the total error (effectivity index) in the piecewise $H^2$ and $H^1$ norm  remains bounded:  $3\leq H2\mu/H2e\leq 5$  and  $5.5\leq H1\mu/H1e\leq 7$ in both examples. This confirms empirically that the error  estimator mimics the behaviour of the total error and also validates  Theorem~\ref{rel}.

\smallskip
\noindent\textit{Dominant error contributions}. Figure~7.5 displays the individual components $\eta_\ell,\zeta_\ell,\Xi^1_\ell,\Xi^2_\ell$ in the error estimator, which abbreviate  $\eta_{\T_\ell},\zeta_{\T_\ell},\Xi^1_{\T_\ell},\Xi^2_{\T_\ell}$ and shows the dominance of $\Xi_\ell^1$. 
The remaining part $\Xi_\ell^2$ of $\Xi_\ell:=\Xi_\ell^1+\Xi_\ell^2$ and the volume residual $\eta_\ell$ converge more rapidly. The error estimator  components for the $H^1$ error in the adaptive refinement behave similar.
\begin{figure}[H]
	\begin{subfigure}{.5\textwidth}
		\centering
		\includegraphics[width=0.9\linewidth]{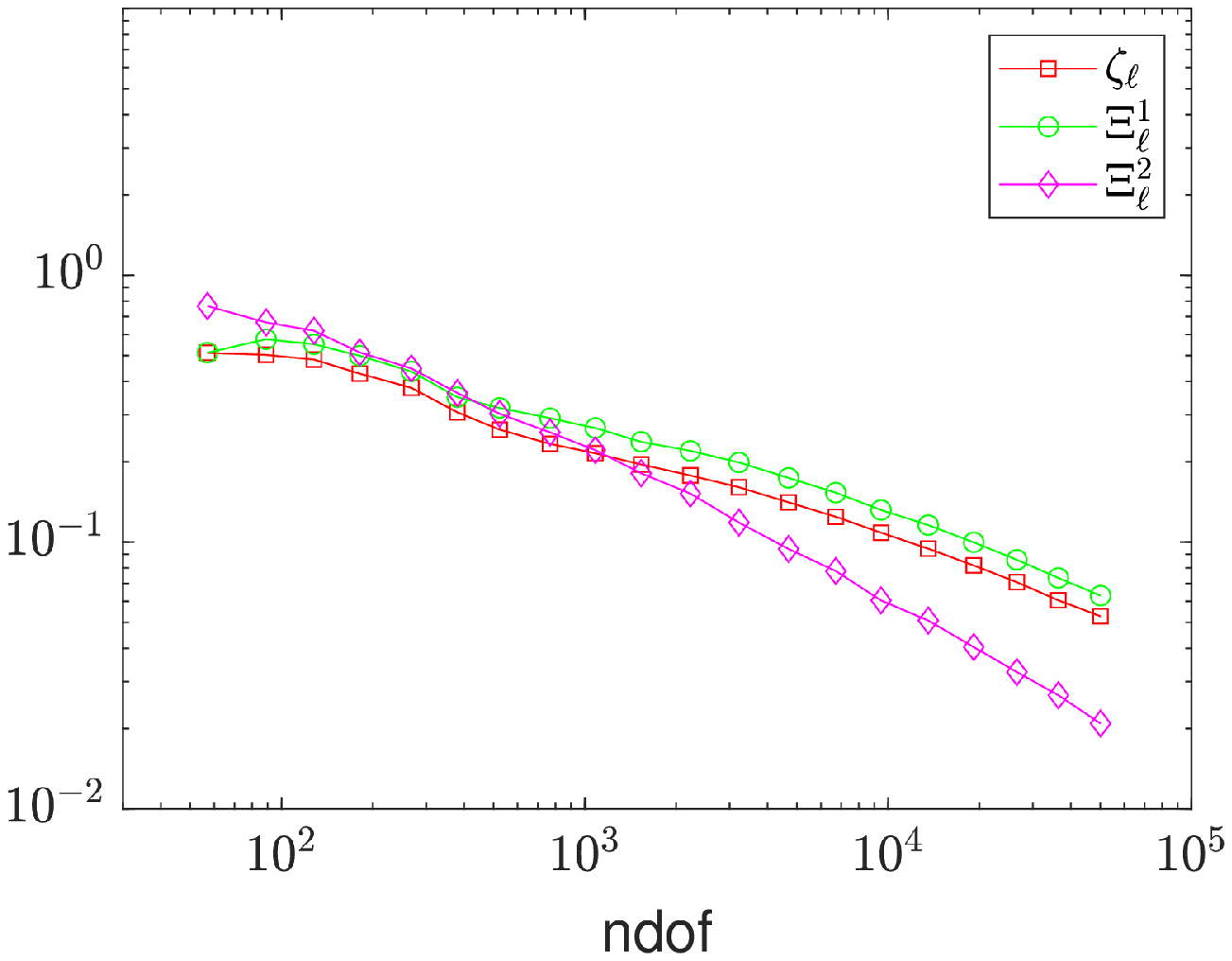}
	\end{subfigure}%
	\begin{subfigure}{.5\textwidth}
		\centering
		\includegraphics[width=0.9\linewidth]{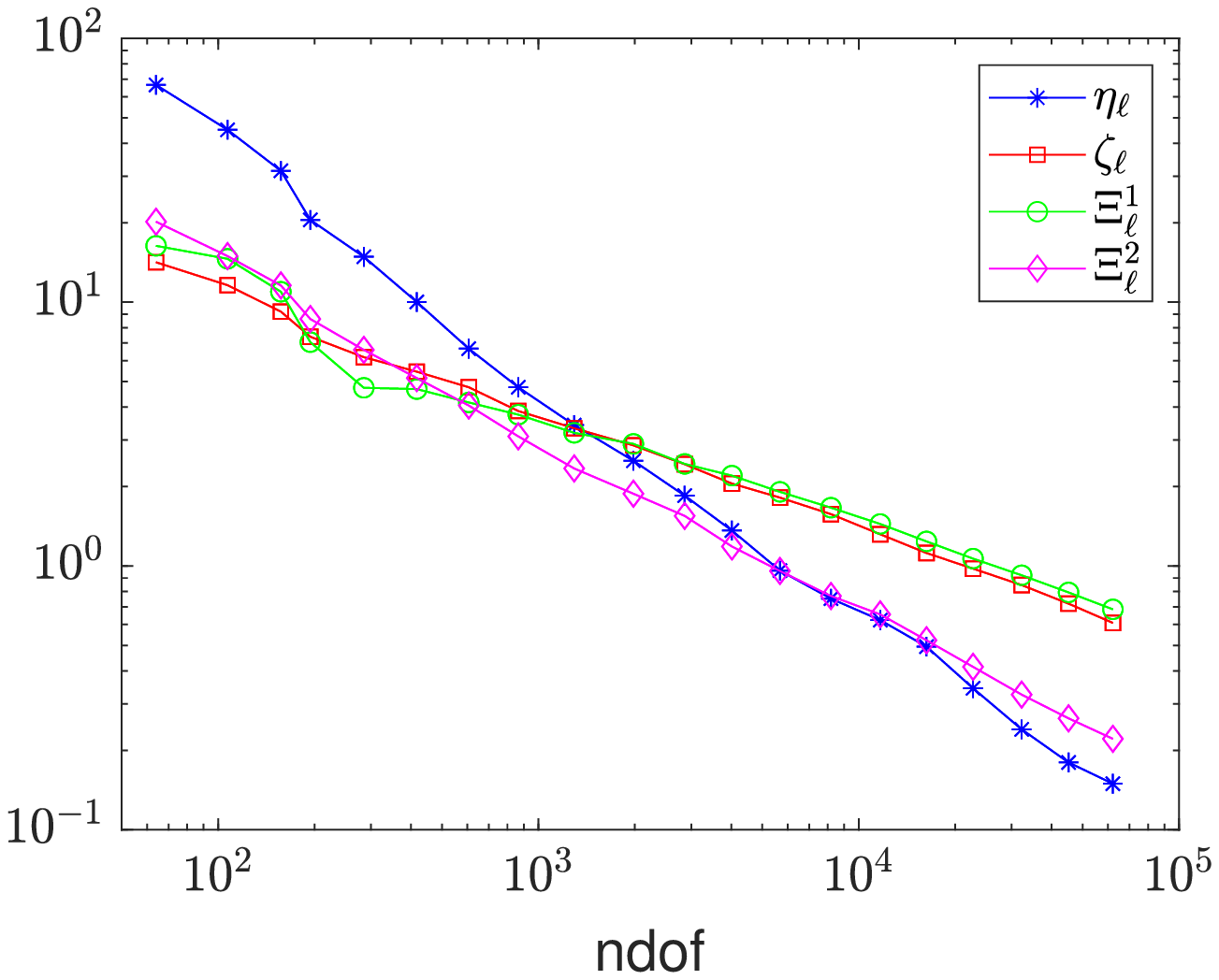}
	\end{subfigure}
	\caption{Convergence history plot of the error estimator components corresponding to $H2e$ for adaptive mesh-refinement vs ndof in Subsection~7.2 (left) and 7.3 (right).}
	\label{fig4.7}
\end{figure}

\noindent\textbf{Acknowledments}. The research of the first author has been supported by the German research foundation
in the Priority Program 1748  \textit{foundation and application of
generalized mixed FEM towards nonlinear problems in solid mechanics} (CA 151/22-2) and SPARC project (id 235)  \textit{the
mathematics and computation of plates} and SERB POWER Fellowship SPF/2020/000019. The second author acknowledges the financial support of the University Grants Commission from the Government of India.

\bibliographystyle{siamplain}
\bibliography{references}
\appendix
\newpage
\section{Proof of Lemma~2.6}
The integral  $a^P(v,w)=\int_P D^2v:D^2w\,dx$ for $v\in H^4(P)$ and $w \in H^2(P)$ allows an integration by parts formula \cite[Sec.~2.1]{antonietti2018fully} with the boundary terms $\mn(v):=v_{\bn \bn},\; T(v):= (\Delta v)_{\bn}+v_{\bn\bt\bt},\; \mt(v):=v_{\bn\bt}$ from \cite{antonietti2018fully} and the   abbreviations $[M_{\bn\bt}(v)]_{z_j}:=M_{\bn\bt}(v)|_{E(j-1)}(z_j)-M_{\bn\bt}(v)|_{E(j)}(z_j)$ with $E(0):=E(N_P)$ for a cyclic notation along $\partial P$. Those boundary terms are well-defined as traces   of a smooth function $v\in H^4(P)$ and then the formula reads
\begin{align}
a^P(v,w)= (\Delta^2v,w)_{L^2(P)}+(M_{\bn\bn}(v),w_\bn)_{L^2(\partial P)}-(T(v),w)_{L^2(\partial P)}+\sum_{j=1}^{N_P}[M_{\bn\bt}(v)]_{z_j}w(z_j).\label{2.6a}
\end{align}
The boundary terms require a smooth function $v$ like the quadratic polynomial $\chi\in\p_2(P)$ with $\Delta^2\chi=0=T(\chi)$ and with piecewise constants $M_{\bn\bn}(\chi)|_{E(k)}=\chi_{\bn\bn}|_{E(k)}$ and $M_{\bn\bt}(\chi)|_{E(k)}=\chi_{\bn\bt}|_{E(k)}$ for $k=1,\dots,N_P$, which are computable in terms of  $D^2\chi\in\mathbb{S}$ and of the geometry of $P$ from Figure~2.1.a. This and the definition of $G$ from  (2.15) lead,  for $\chi\in\p_2(P)$  and $w\in H^2(P)$ in \eqref{2.6a}, to 
\begin{align}
a^P(Gw,\chi)=a^P(\chi,w)= (M_{\bn\bn}(\chi),w_\bn)_{L^2(\partial P)}+\sum_{j=1}^{N_P}[M_{\bn\bt}(\chi)]_{z_j}w(z_j).\label{A1}
\end{align}
The dofs of $w$ from (2.1) allow for a re-writing of  the right-hand side of \eqref{A1}, namely \begin{align}a^P(Gw,\chi)=\sum_{k=1}^{N_P}\chi_{\bn \bn}|_{E(k)} \text{dof}_{N_P+k}(w)+\sum_{j=1}^{N_P}(\chi_{\bn\bt}|_{E(j-1)}-\chi_{\bn\bt}|_{E(j)})\text{dof}_j(w).\label{2.12a}\end{align}
This defines $Gw\in\p_2(P)$ up to an affine contribution fixed in (2.16). The first condition in (2.16)  reads  \begin{align}N_P^{-1}\sum_{j=1}^{N_P}Gw(z_j)=N_P^{-1}\sum_{j=1}^{N_P}w(z_j)=N_P^{-1}\sum_{j=1}^{N_P}\text{dof}_j(w).\label{A4}\end{align} For the second condition in (2.16),  the identity $\int_{E(k)}\nabla w\,ds=\text{dof}_{N_P+k}(w)\bn_{E(k)}+(\bt_{E(k-1)}-\bt_{E(k)})\text{dof}_k(w)$ for $k=1,\dots,N_P$ from (2.11) shows  
\begin{align}
\int_{\partial P}\nabla w\,ds=\sum_{k=1}^{N_P}\Big(\int_{E(k)}\nabla w\,ds\Big)=\sum_{k=1}^{N_P}(\text{dof}_{N_P+k}(w)\bn_{E(k)}+(\bt_{E(k-1)}-\bt_{E(k)})\text{dof}_k(w)).\label{A5}
\end{align}
The equations \eqref{2.12a}-\eqref{A5} form a linear system of 6 equations for $Gw\in\p_2(P)$ and the right-hand sides in \eqref{2.12a}-\eqref{A5}  are computable in terms of  $\text{dof}_1(w),\dots,\text{dof}_{2N_P}(w)$ for any $w\in H^2(P)$. Hence its solution $Gw$ is computable in terms of the dofs of $w$. It is elementary to see that $Gw$ is uniquely determined and the $6\times 6$ coefficient matrix in the resulting linear system of equations is regular. (Another proof for this follows from the estimates in the second part of the proof below.) This proves the first part of the lemma.

\medskip
\noindent The  second part part of the proof estimates $|Gw|_{m,P}$ for $m=0,1,2$ in terms of $\text{Dof}(w)=(\text{dof}_1(w),\dots,\text{dof}_{2N_P}(w))$.
The definition of $G$  with $\chi=Gw\in\p_2(P)$ in (2.15) and \eqref{2.12a} imply
\begin{align}
|Gw|^2_{2,P}=a^P(w,Gw)&\leq \sum_{k=1}^{N_P}|\chi_{\bn\bn}|_{E(k)}||\text{dof}_{N_P+k}(w)|+\sum_{j=1}^{N_P}(|\chi_{\bn\bt}|_{E(j-1)}-\chi_{\bn\bt}|_{E(j)}|)|\text{dof}_j(w)|.\nonumber
\end{align} 
Cauchy-Schwarz inequalities and $|\chi_{\bn\bn}|_{E(k)}|, |\chi_{\bn\bt}|_{E(k)}|\leq |D^2\chi|$ reveal
\begin{align*}
|Gw|^2_{2,P}\leq 2|D^2\chi|\sum_{j=1}^{2N_P}|\text{dof}_j(w)|\leq \sqrt{8N_P}|P|^{-1/2}|Gw|_{2,P}|\text{Dof}(w)|_{\ell^2}.
\end{align*}  The bounds $|P|^{-1/2}\leq \pi^{-1/2}C_{\text{sr}}^{-2} h_P^{-1}$ (cf. \cite[Chap. 1]{da2014mimetic}) with a shape regularity constant $C_{\text{sr}}$ of $\tT(P)$ (that exclusively depends on $\rho$)  and $N_P\leq M(\rho)$ from Subsection~2.1 show
\begin{align}
|Gw|_{2,P}\leq \sqrt{8\pi^{-1}C_{\text{sr}}^{-4}M(\rho)}h_P^{-1}|\text{Dof}(w)|_{\ell^2}.\label{2.22}
\end{align}
Define $a:=N_P^{-1}\sum_{j=1}^{N_P}w(z_j)\in\mathbb{R}$,  $B:=|\partial P|^{-1}\int_{\partial P}\nabla w\,ds\in\mathbb{R}^2$, and
the affine function $g(x):=a+B(x-N_P^{-1}\sum_{j=1}^{N_P}z_j)$ to control the lower-order terms $\sum_{m=0}^{1}h_P^{m-1}|Gw|_{m,P}$. The definition (2.16) of $G$  provides \begin{align*}\sum_{j=1}^{N_P}(Gw-g)(z_j)=\Big(\sum_{j=1}^{N_P}w(z_j)\Big)-aN_P=0,\quad\int_{\partial P}\nabla(Gw-g)\,ds=\int_{\partial P}\nabla w\,ds- |\partial P|B=0.
\end{align*}
The Poincar\'e-Friedrichs inequality from Theorem~2.2.b, therefore, shows  for $Gw-g$  that \begin{align}h_P^{-2}\|Gw-g\|_{L^2(P)}+h_P^{-1}|Gw-g|_{1,P}\leq C_\PF |Gw|_{2,P}.\label{2.23}\end{align}The Cauchy-Schwarz inequality implies $|a|\leq N_P^{-1/2} |\text{Dof}(w)|_{\ell^2}$. Since $|\bn_{E(k)}|=1=|\bt_{E(k)}|$,  \eqref{A5} shows that $|\int_{\partial P}\nabla w\,ds|\leq 2\sum_{j=1}^{2N_P}|\text{dof}_j(w)|$. A Cauchy-Schwarz inequality and $|\partial P|^{-1}\leq  N_P^{-1}\rho^{-1}h_P^{-1}$ from (M2) imply that $|B|\leq 2\sqrt{2}N_P^{-1/2}\rho^{-1}h_P^{-1}|\text{Dof}(w)|_{\ell^2}$.  The definition of $g$ and the previous two estimates for $|a|$ and $|B|$ result in
\begin{align}
\|g\|_{L^2(P)}\leq  |P|^{1/2}(|a|+h_P|B|)\leq  \big(\frac{\pi}{N_P}\big)^{1/2}(1+2\sqrt{2}\rho^{-1})h_P |\text{Dof}(w)|_{\ell^2}\label{2.24}
\end{align}
with the coarse bound $|P|\leq \pi h_P^2$ in the last step. This and the inverse estimate for $|g|_{1,P}$ (collected from an inverse estimate in the triangle $T\in\tT(P)$ and so with a well-established bound for $C_{\text{inv}}$) show
\begin{align}
|g|_{1,P}\leq C_{\text{inv}}h_P^{-1}\|g\|_{L^2(P)}\leq \big(\frac{\pi}{N_P}\big)^{1/2}(1+2\sqrt{2}\rho^{-1})C_{\text{inv}} |\text{Dof}(w)|_{\ell^2}.\label{A9}
\end{align}
Triangle inequalities $\sum_{m=0}^2h_P^{m-2}|Gw|_{m,P}\leq \sum_{m=0}^2h_P^{m-2}(|Gw-g|_{m,P}+|g|_{m,P})$,  \eqref{2.23}-\eqref{A9}, and  the abbreviation $C_{9}:=1+C_\PF+(1+C_{\text{inv}})\pi^{1/2}(1+2\sqrt{2}\rho^{-1})$ prove
\begin{align*}
\sum_{m=0}^2h_P^{m-1}|Gw|_{m,P}\leq C_{9}(h_P|Gw|_{2,P}+ |\text{Dof}(w)|_{\ell^2})\leq C_g|\text{Dof}(w)|_{\ell^2}
\end{align*}
with \eqref{2.22}  and $C_g:=C_{9}(1+\sqrt{8\pi^{-1}C_{\text{sr}}^{-4}M(\rho)})$ in the last step. This concludes the proof. (Notice that, in particular, $\text{Dof}(w)=0$ implies $Gw=0$ and this proves that the linear system of equations \eqref{2.12a}-\eqref{A5} involves a regular coefficient matrix as announced.)\qed
\section{Proof of Proposition~3.1}\label{A}
\textit{Step 1 defines an HCT finite element space}.  Recall the sub-triangulation $\tT(P)$ from Subsection~2.1  and decompose any triangle $T\in\tT(P)$ further into three sub-triangles $\mathcal{K}(T)$  depicted in Figure~B.1.c. Then the Hsieh-Clough-Tocher (HCT) finite element space \cite{ciarlet2002finite} reads\[
\h(\tT(P)):=\{\hat{v}\in H^2(P):\forall\; T\in\tT\quad\hat{v}|_T\in\p_3(\mathcal{K}(T))\}.\]
\begin{figure}[H]
	\centering
	\begin{subfigure}{.33\textwidth}
		\centering
		\includegraphics[width=0.4\linewidth]{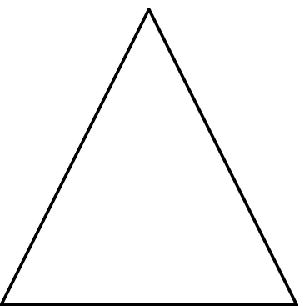}        
		\caption{}
		\vspace{1cm}
	\end{subfigure}%
	\begin{subfigure}{.33\textwidth}
		\centering
		\includegraphics[width=0.4\linewidth]{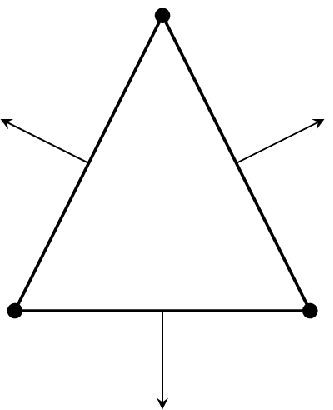}               
		\caption{}
		\vspace{1cm}
	\end{subfigure}%
	\begin{subfigure}{.33\textwidth}
		\centering
		\includegraphics[width=0.4\linewidth]{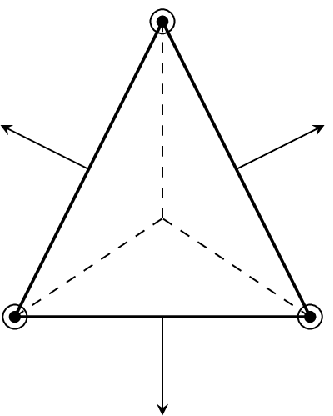}
		\caption{}
		\vspace{1cm}
	\end{subfigure}
	\caption{(a) Triangle $T$,  (b) Morley, (c) HCT.}
	\label{tz}
\end{figure}
\noindent  The standard degrees of freedom (dofs) in the HCT finite element (cf. \cite[Chap.~6]{ciarlet2002finite} or \cite[Sec.~2.3]{gallistl2015morley})  are the nodal values of the function and
its first-order derivatives at each vertex and the mid-point values of the normal derivatives along  each edge of a triangle as depicted in Figure~B.1.c. This paper   utilizes the integral means instead of the  mid-point values  of normal derivatives  along edges  and all other dofs (i.e., nodal values) are unchanged. 
Let $\psi_1^\h,\dots,\psi_{2N_P}^\h$ be the $2N_P$ nodal basis functions  in $\h(\tT(P))$  with $\nabla\psi_j^\h(z_\ell)=0$ for all  $j=1,\dots,2N_P$ and, for all $k,\ell=1,\dots,N_P$,  \begin{align}
\psi_k^\h(z_\ell)=\delta_{k\ell},\; \dashint_{E(\ell)}(\psi_k^\h)_\bn\,ds=0,\;\text{and}\;\psi_{k+N_P}^\h(z_\ell)=0,\; \dashint_{E(\ell)}(\psi_{k+N_P}^\h)_\bn\,ds=\delta_{k\ell}.\label{B}
\end{align} 
In contrast to this, let $\widetilde{\psi}_{\ell+N_P}^\h\in\h(\tT(P))$  be a  nodal basis function in the standard  HCT finite element with $\widetilde{\psi}_{\ell+N_P}^\h(\text{mid}(E(\ell)))=1$ for $\ell= 1,\dots,N_P$, while all other dofs vanish.  Then $4\int_0^1 s(1-s)\,ds=2/3$ leads to $\psi_{\ell+N_P}^\h=\frac{3}{2}\widetilde{\psi}_{\ell+N_P}^\h$  for $\dashint_{E(\ell)}(\psi_{\ell+N_P}^\h)_\bn\,ds=1$ and $\ell=1,\dots,N_P$. This observation, the scaling of the standard HCT basis functions  from  \cite[Prop. 2.5]{gallistl2015morley}, and the bound $h_T^{-1}\leq \rho^{-1}h_P^{-1}$ from (M2) for all $T\in\tT(P)$ provide a positive constant $C_\h$ (that exclusively depends on $\rho$) in \begin{align}\max_{k=1}^{N_P}h_P|\psi_k^\h|_{2,P}+\max_{\ell=1}^{N_P}|\psi_{\ell+N_P}^\h|_{2,P}\leq C_\h.\label{scale}\end{align}  

\bigskip
\noindent\textit{Step 2 constructs an HCT interpolation}.   
Recall the nodal basis functions $\psi_1^\h,\dots,\psi_{2N_P}^\h$ of $\h(\tT(P))$ selected in Step 1. The HCT interpolation $w_\h\in \h(\tT(P))$ of a given $w\in H^2(P)$  reads
\begin{align}
w_\h:=\sum_{k=1}^{N_P} w(z_k)\psi_k^\h+\sum_{\ell=1}^{N_P}\left(\dashint_{E(\ell)}w_\bn\,ds\right) \psi_{\ell+N_P}^\h.\label{I}
\end{align}
The  duality relations \eqref{B} imply for $m=1,\dots,N_P$ in \eqref{I} that
$
w_\h(z_m)=w(z_m)$ and $\int_{E(m)}(w_\h)_\bn\,ds=\int_{E(m)}w_\bn\,ds.
$
In other words, $\text{Dof}(w_\h)=\text{Dof}(w)$ for the vector Dof with components from (2.1).

\bigskip 
\noindent\textit{Step 3 defines the Hilbert space $(V_0,a^P)$}.   The kernel of the linear map $\text{Dof}:H^2(P)\to \mathbb{R}^{2N_P}$ is the closed subspace
\[V_0:=\left\{v\in H^2(P): v(z_j)=0=\int_{E(j)}v_\bn\,ds\quad\text{for}\;j=1,\dots,N_P\right\}\]
of the Hilbert space $H^2(P)$ and so complete.  Hence $(V_0,a^P)$ is a Hilbert space. Notice that $w-w_\h\in V_0$ for any $w\in H^2(P)$ from Step 2 and  $\pid w=0$ for $w\in V_0$ (from Lemma~2.6 with $\text{Dof}(w)=0$ as explained in Appendix~A).

\bigskip
\noindent\textit{Step 4 proves that $V_0^\perp:=\{v\in H^2(P): a^P(v,w)=0\quad\text{for all}\;w\in V_0\}\subset\widehat{V}_h(P).$} Given any $v\in V_0^\perp$ and $w\in H^2(P)$, define the HCT interpolation $w_\h$ of $w$ from Step 2. Then $w-w_\h\in V_0$ from Step 3 implies that
\begin{align}
a^P(v,w)=a^P(v,w_\h)&=\sum_{k=1}^{N_P}a^P(v,\psi_k^\h)w(z_k)+\sum_{\ell=1}^{N_P}a^P(v,\psi_{\ell+N_P}^\h)h_{E(\ell)}^{-1}\int_{E(\ell)} w_\bn\,ds.\label{A2}
\end{align}
Define  $f:=0\in\p_{-1}(P)$, $a_\ell:=a^P(v,\psi_\ell^\h)$, and $g\in\p_0(\e(P))$ by $g|_{E(\ell)}:=a^P(v,\psi_{\ell+N_P}^\h)h_{E(\ell)}^{-1}$ for $\ell=1,\dots,N_P$ to rewrite the right-hand side in \eqref{A2} as
\[a^P(v,w)=(g, w_\bn)_{L^2(\partial P)}+\sum_{\ell=1}^{N_P}a_\ell w(z_\ell) \quad\text{for all}\;w\in H^2(P).\]
This implies $v\in\widehat{V}_h(P)$ for $r=-1$ and concludes the proof of  Step~4. \qed

\bigskip
\noindent\textit{Step 5  proves that $\mathrm{Dof}:\widehat{V}_h(P)\to\mathbb{R}^{2N_P}$ is surjective.} Given any $x=(x_1,\dots,x_{2N_P})\in\mathbb{R}^{2N_P}$ and the  functions $\psi_1^\h,\dots,\psi_{2N_P}^\h$ selected in Step 1, define 
$u_{BC}:=\sum_{k=1}^{2N_P}x_k\psi_k^\h\in H^2(P)$ with $\text{Dof}(u_{BC})=x$ from Step 2.
Let $u_0\in V_0$ denote the Riesz representation of $a^P(u_{BC},\cdot)$ in $(V_0,a^P)$, i.e., $a^P(u_0,\cdot)=a^P(u_{BC},\cdot)$ in $V_0$. Since $a^P(u_0-u_{BC},\cdot)=0$ in $V_0$, Step 4 shows that $\widehat{u}_h:=u_{BC}-u_0\in\widehat{V}_h(P)$. Recall $\text{Dof}(u_{BC})=x$ and $\text{Dof}(u_0)=0$ to deduce $\text{Dof}(\widehat{u}_h)=x$. \qed

\bigskip
\noindent\textit{Step 6 establishes an inclusion in $\widehat{V}_h(P)$ with a non-zero $f$}.  Given $f\in \p_r(P)$, the Riesz-representation theorem guarantees the unique existence of the weak solution $u(f)\in V_0$ to
\begin{align}a^P(u(f),v)=(f,v)_{L^2(P)}\quad\text{for all}\;v\in V_0.\label{2.8a}\end{align} 
Recall that $r$ is a fixed parameter in $\{-1,0,1,2\}$ and $r=-1$ is trivial in this step. Since \eqref{2.8a} implies $\Delta^2u(f)=f$ in $P$, it remains to prove that $u(f)\in\widehat{V}_h(P)$. 
Given any $w\in H^2(P)$ with $w_\h$ from \eqref{I}, $w-w_\h\in V_0$ leads in \eqref{2.8a} to  $a^P(u(f),w-w_\h)=(f, w-w_\h)_{L^2(P)}$. Hence
\begin{align}
a^P(u(f),w)=(f,w-w_\h)_{L^2(P)}+a^P(u(f),w_\h)=(f,w)_{L^2(P)}+\Lambda(w_\h)\label{2.12}
\end{align} 
with the linear functional  $\Lambda(v):=a^P(u(f),v)-(f,v)_{L^2(P)}$ for any $v\in H^2(P)$. The representation \eqref{I} of $w_\h$ shows 
\begin{align}
a^P(u(f),w)&=(f,w)_{L^2(P)}+\sum_{k=1}^{N_P} w(z_k)\Lambda(\psi_k^\h)+\sum_{\ell=1}^{N_P}\left(\dashint_{E(\ell)}w_\bn\,ds\right) \Lambda(\psi_{\ell+N_P}^\h)\nonumber\\&=(f,w)_{L^2(P)}+(g, w_\bn)_{L^2(\partial P)}+\sum_{k=1}^{N_P} a_k w(z_k)\quad\text{for all}\;w\in H^2(P)\label{2.19}
\end{align} 
with the  definition of $a_\ell:=\Lambda(\psi_\ell^\h)$ and $g\in\p_0(\e(P))$ by $g|_{E(\ell)}:=|E(\ell)|^{-1}\Lambda(\psi_{\ell+N_P}^\h)$ for $\ell=1,\dots,N_P$. This implies that $u(f)\in\widehat{V}_h(P)$. Notice that $u(f)$  depends linearly on $f\in\p_r(P)$ and so $\p_r(P)\to \widehat{V}_h(P), f\mapsto u(f)$ defines a linear map.

\bigskip
\noindent\textit{Step 7 proves that $\mathcal{L}:\p_r(P)\to\p_r(P), f\mapsto \Pi_ru(f)$  is an isomorphism}. For any $f\in\p_r(P)$ with $\mathcal{L}f=0$, the orthogonality $(1-\Pi_r)u(f)\perp\p_r(P)$ in $L^2(P)$ shows $0=\int_P(\mathcal{L}f)f\,dx=\int_P u(f)f\,dx$. This and $v=u(f)$ in \eqref{2.8a} result in
$
0=a^P(u(f),u(f))=|u(f)|_{2,P}.
$
Consequently $u(f)\in\p_1(P)\cap V_0$ and so $u(f)=0$. Thus  $f=\Delta^2u(f)=0$ and $\mathcal{L}$ is injective; whence  bijective. \qed

\bigskip
\noindent\textit{Step 8 proves that $\mathrm{Dof}:V_h(P)\to \mathbb{R}^{2N_P}$ is an isomorphism}.

\medskip
\noindent\textit{Proof of surjectivity}.  Given any $x\in\mathbb{R}^{2N_P}$, there exists $\widehat{u}_h\in \widehat{V}_h(P)$ with $\text{Dof}(\widehat{u}_h)=x$  from Step 4. This leads to $g:=\Pi_r\pid\widehat{u}_h-\Pi_r\widehat{u}_h\in\p_r(P)$. Since $\mathcal{L}$ is bijective in $\p_r(P)$ (from Step 7), there exists $f\in\p_r(P)$ with $\Pi_ru(f)=g$. Recall that $u(f)\in V_0$ implies $\mathrm{Dof}(u(f))=0$ and $\pid u(f)=0$. Altogether, $u_h^P:=u(f)+\widehat{u}_h\in\widehat{V}_h(P)$ satisfies $\mathrm{Dof}(u^P_h)=x$  and $\Pi_r\pid u^P_h=\Pi_r\pid\widehat{u}_h=g+\Pi_r\widehat{u}_h=\Pi_ru(f)+\Pi_r\widehat{u}_h=\Pi_ru_h^P$. Hence $u^P_h\in V_h(P)$.\qed

\medskip
\noindent\textit{Proof of injectivity}. 
Suppose $v_h\in V_h(P)$ satisfies $\mathrm{Dof}(v_h)=0$. Recall  $v_h\in V_h(P)\cap V_0$ and $\pid v_h=0$ from Step 3.  The definition (3.1) (for $v=v_h$ with $\text{Dof}(v)=0$) leads for some $f\in\p_r(P)$ such that $a^P(v_h,v_h)=(f, v_h)_{L^2(P)}$. This and (3.2) reveal
\[|v_h|_{2,P}^2=a^P(v_h,v_h)=(f, v_h)_{L^2(P)}=(f,\Pi_rv_h)_{L^2(P)}=(f,\Pi_rGv_h)_{L^2(P)}=0.\]
Hence $v_h\in V_0\cap\p_1(P)$ and so $v_h=0$.\qed

\bigskip
\noindent\textit{Proof of \textbf{(H1)}}. The key observation from Step 8 is that $V_h(P)$ has the dimension $2N_P$ and $\text{dof}_1,\dots,\text{dof}_{2N_P}$ from (2.1) are  linear independent. Consequently $(P,V_h(P),(\text{dof}_1,\dots,\text{dof}_{2N_P}))$ is a finite element in the sense of Ciarlet.\qed

\bigskip
\noindent\textit{Step 9 provides the scaling of an HCT interpolation}. Let $\psi_h\equiv \psi_p$ be a nodal basis function of the finite element  $(P,V_h(P),\text{Dof})$ for some $p\in\{1,\dots,2N_P\}$ and let $\psi_\h$ be its HCT interpolation from Step 2, namely
\begin{align}
\psi_\h=\sum_{k=1}^{N_P} \psi_h(z_k)\psi_k^\h+\sum_{\ell=1}^{N_P}\left(\dashint_{E(\ell)}(\psi_h)_\bn\,ds\right) \psi_{\ell+N_P}^\h.\label{B9}
\end{align}
The definition of a nodal basis function $\psi_h$ shows that $\psi_h(z_k)$ and $ \int_{E(\ell)}(\psi_h)_\bn\,ds$ are zero or one for $k,\ell=1,\dots,N_P$.  The scaling of $\psi_1^\h,\dots,\psi_{2N_P}^\h$ from \eqref{scale}, and the bound $ h_{E}^{-1}\leq \rho^{-1}h_P^{-1}$ for all $E\in\e(P)$ from (M2) lead to  \begin{align}|\psi_\h|_{2,P}\leq C_\h(1+\rho^{-1}) h_P^{-1}.\label{B10}\end{align}

\bigskip
\noindent\textit{Step 10 controls a nodal basis function in $V_h(P)$ by its HCT interpolation}. For a given nodal basis function $\psi_h\in V_h(P)$, its HCT interpolation $\psi_\h$ from \eqref{B9},  $\text{Dof}(\psi_h-\psi_\h)=0$ from Step 3, and the test function $w=\psi_h-\psi_\h$  lead in (3.1) to \begin{align}a^P(\psi_h,\psi_h-\psi_\h)=(f,\psi_h-\psi_\h)_{L^2(P)}\label{B7}\end{align} for some $f\in\p_r(P)$.  The definition of the $L^2$ projection $\Pi_r$  and the relation $\Pi_rG\psi_h=\Pi_r\psi_h$ from (3.2) for $r=-1,0,1,2$ show that  \[(f,\psi_h)_{L^2(P)}=(f,\Pi_rG\psi_h)_{L^2(P)}=(f,\Pi_rG\psi_\h)_{L^2(P)}\] with $G\psi_h=G\psi_\h$ from $\text{Dof}(\psi_h)=\text{Dof}(\psi_\h)$ and Lemma~2.6 in the last step. This, \eqref{B7}, and a Cauchy-Schwarz inequality imply  that
\begin{align}
|\psi_h|^2_{2,P}&\leq |\psi_h|_{2,P}|\psi_\h|_{2,P}+\|f\|_{L^2(P)}\|\Pi_rG\psi_\h-\psi_\h\|_{L^2(P)}.\label{B1}
\end{align}
Recall $f=0$ for $r=-1$ so suppose $r=0,1,2$ for the time being. Since $\int_P(1-\Pi_r)G\psi_\h\,dx=0$ from the definition of the $L^2$ projection $\Pi_r$ for $r=0,1,2$, the Poincar\'e inequality from \cite[Subsec.~2.1.5]{14} shows  that  $\|(1-\Pi_r)G\psi_\h\|_{L^2(P)}\leq C_{\text{P}}h_P|G\psi_\h|_{1,P}$ with a positive constant $C_{\text{P}}$ (that exclusively depends on $\rho$).  
Since   $\psi_h$ is a nodal basis function, $\int_E(\psi_\h)_\bn\,ds=\int_E(\psi_h)_\bn\,ds$ for all $E\in\e(P)$ imply that $\int_E(\psi_\h)_\bn\,ds=0$ for all but at most one $E\in\e(P)$ and  the Poincar\'e-Friedrichs inequality  from Theorem~2.2.a  shows $|\psi_\h|_{1,P}\leq C_\PF h_P|\psi_\h|_{2,P}$. This,  Lemma~2.4, and   a triangle inequality result in $|G\psi_\h|_{1,P}\leq |G\psi_\h-\psi_\h|_{1,P}+|\psi_\h|_{1,P}\leq 2C_\PF h_P|\psi_\h|_{2,P}$. Hence the previous estimates lead to
\begin{align*}
\|(1-\Pi_r)G\psi_\h\|_{L^2(P)}\leq 2C_{\text{P}}C_\PF h_P^2|\psi_\h|_{2,P}.
\end{align*} This, Lemma~2.4, and 
$\|(1-\Pi_rG)\psi_\h\|_{L^2(P)}\leq \|(1-\Pi_r)G\psi_\h\|_{L^2(P)}+\|(1-G)\psi_\h\|_{L^2(P)}$ result in \begin{align}\|(1-\Pi_rG)\psi_\h\|_{L^2(P)}\leq C_\PF(1+2C_{\text{P}}) h_P^2|\psi_\h|_{2,P}.\label{B1n}\end{align}

\bigskip
\noindent\textit{ Step 11 bounds the term $\|f\|_{L^2(P)}$}. Recall the bubble-function $b_P$ from the proof of Theorem~4.3. The substitution of $\chi=f\in\p_r(P)$ in the first  estimate of (4.15) proves that \[C_b^{-1}\|f\|^2_{L^2(P)}\leq (f,b_Pf)_{L^2(P)}=a^P(\psi_h,b_Pf)\] with $w=b_Pf\in H^2_0(P)$ in (3.1)   in the last step. A Cauchy-Schwarz inequality and the second inverse estimate of (4.16) imply 
\[C_b^{-1}\|f\|_{L^2(P)}^2\leq |\psi_h|_{2,P}|b_Pf|_{2,P}\leq C_bh_P^{-2}|\psi_h|_{2,P}\|f\|_{L^2(P)}.\] 
Consequently, $\|f\|_{L^2(P)}\leq C_b^2h_P^{-2}|\psi_h|_{2,P}$. 

\bigskip
\noindent\textit{Proof of \textbf{(H2)}}. The last estimate and the combination of \eqref{B1}-\eqref{B1n} result in
\[|\psi_h|_{2,P}\leq (1+C_b^2(C_\PF(1+2C_{\text{P}}))) |\psi_\h|_{2,P}\]
for $r=0,1,2$. For $r=-1$, $f=0$ and \eqref{B7} show $|\psi_h|_{2,P}\leq |\psi_\h|_{2,P}$. The combination with \eqref{B10}   verifies  \textbf{(H2)} with $C_{\text{stab}}:=C_\h (1+\rho^{-1})(1+C_b^2(C_\PF(1+2C_{\text{P}})))$ for a nodal basis function $\psi_h$ in $V_h(P)$ from (3.1)-(3.2) and concludes the proof.\qed
\begin{remark}[trace of $H^2$ functions]\label{rem1}
	The trace operator $\text{tr}:=(\gamma_0,\gamma_1): H^2(P)\to H^{3/2}(\partial P)\times H^{1/2}(\partial P)$ for a polygon $P$ is \textit{not} surjective, i.e., $v|_{\partial P}$ and $v_\bn|_{\partial P}$ are \textit{not} independent of each other \cite{costabel1996invertibility, fuhrer2021trace}. One consequence for weak solutions is that we \textit{cannot} immediately 
	split the boundary conditions in the weak form  \eqref{2.6a}  into the two strong formulations     $M_{\bn\bn}(v_h)|_{\partial P}\in\p_0(\e(P))$ and $T(v_h)|_{\partial P}=0$ for $v_h\in V_h(P)$. 
\end{remark}
\begin{remark}[weak and strong formulations of VE functions]\label{rem2}
	Compared to the current VE literature on fourth-order problems \cite{antonietti2018fully,chen2020nonconforming,dedner2022robust,huang2021medius,zhao2018morley}, the definition of $V_h(P)$ in (3.1)-(3.2) or (3.3)-(3.4) looks different. In Example~1, for instance, the analog to $V_h(P)$ in \cite{antonietti2018fully,chen2020nonconforming} reads 
	\begin{align}
	V_h^s(P):=\begin{rcases}\begin{dcases}v_h\in H^2(P):& \Delta^2v_h\in\p_r(P),\; \mn(v_h)|_{\partial P}\in\p_0(\e(P))\;\text{and}\;\\& T(v_h)|_{\partial P}\in\p_{-1}(\partial P),\; v_h-Gv_h\perp \p_r(P)\;\text{in}\;L^2(P)\end{dcases}\end{rcases}.\label{0}
	\end{align}
	We refer to this as the strong formulation, but utilize the weak form of $V_h(P)$ in (3.1)-(3.2) throughout this paper. The point is the regularity of the weak solution $v\in H^2(P)/\p_1(P)$ to
	\begin{align}
	a^P(v,w) = (f,w)_{L^2(P)}+(g,w_\bn)_{L^2(\partial P)}+\sum_{j=1}^{N_P}a_jw(z_j)\quad\text{for all}\;w\in H^2(P).\label{1}
	\end{align}
	The weak solution $v\in H^2(P)$ is unique up to affine functions $\p_1(P)$ and the right-hand side displays a given $f\in\p_r(P), g\in\p_0(\e(P))$ and $a_1,\dots,a_{N_P}\in\mathbb{R}$.    The integration by parts formula from \eqref{2.6a}  exploits  the formula  \eqref{1} as
	\begin{align*}
	a^P(v,w) &= (\Delta^2v,w)_{L^2(P)}+(M_{\bn\bn}(v),w_\bn)_{L^2(\partial P)}-(T(v),w)_{L^2(\partial P)}+\sum_{j=1}^{N_P}[M_{\bn\bt}(v)]_{z_j}w(z_j)
	\end{align*}
	that holds for smooth $v\in H^4(P)$ and for all $w\in H^2(P)$. The technical issue is that only for smooth $v$, it holds $M_{\bn\bn}(v)|_{\partial P}\in\p_0(\e(P))$ and $T(v)|_{\partial P}\in\p_{-1}(\partial P)$ and, to the best knowledge of the authors, it is unclear whether the weak solution $v\in H^2(P)\cap C^\infty(\text{int}(P))$ to \eqref{1} allows a weak definition of the individual terms $M_{\bn\bn}(v)|_{\partial P}$ and $T(v)|_{\partial P}$ on the boundary $\partial P$ despite the fact that elliptic regularity guarantees that $v\in C^\infty(\text{int}(P))$ is smooth inside the polygonal domain $P$. A routine argument  with a test function $w\in\mathcal{D}(\text{int}(P))$ leads to
	\begin{align}
	(g,w_\bn)_{L^2(\partial P)}+\sum_{j=1}^{N_P}a_jw(z_j)=(M_{\bn\bn}(v),w_\bn)_{L^2(\partial P)}-(T(v),w)_{L^2(\partial P)}+\sum_{j=1}^{N_P}[M_{\bn\bt}(v)]_{z_j}w(z_j)\label{3}
	\end{align}
	provided $M_{\bn\bn}(v)|_{\partial P}$ and $T(v)|_{\partial P}$ can be defined well. Remark~\ref{rem1} already gives a warning for this and the consequence in the literature on fourth-order problems is, cf., e.g. \cite{fuhrer2021trace}, that only the sum of the right-hand side in \eqref{3}  is a well-defined linear functional and it is never split into $M_{\bn\bn}(v)|_{\partial P}$ and $T(v)|_{\partial P}$. Clearly, once we knew that both $M_{\bn\bn}(v)|_{\partial P}$ and $T(v)|_{\partial P}$ are distributions, the identity \eqref{3} might reveal that $g=M_{\bn\bn}(v)|_{\partial P}$ and $T(v)|_{\partial P}=0$, but we do not know that. On the formal level, if we have the information that $v\in V_h(P)$ is smooth up to the boundary, then $v\in V_h^s(P)$ would follow. In this sense,  the notation $V_h^s(P)$ is interpreted as a strong formulation of $V_h(P)$ from (3.1)-(3.2). We understand that, with the substitution of $V_h^s(P)$ from \eqref{0} as $V_h(P)$, the results of \cite{antonietti2018fully,chen2020nonconforming} are well-defined and remain valid. At least the paper \cite{chen2020nonconforming} already  adopts this point of view in \cite[Lemma~3.4]{chen2020nonconforming}.
	
	\medskip
	\noindent Analog remarks apply to \cite{zhao2018morley} and  Example~2 in this paper.
\end{remark}

\section{Proof of Proposition~3.2}\label{SuppB}
\subsection{One-dimensional finite element}
Recall the vertices $z_1,\dots,z_{N_P}$ and the edges $E(1),\dots,E(N_P)$, the corners $\zeta_1,\dots,\zeta_J$ and the sides $\gamma(1),\dots,\gamma(J)$ of a polygonal domain $P\in\T$, and recall the vector space $S(j)$ and a linear functional $\Lambda_j$ for $j=1,\dots,J$  from Subsection~3.2.2. Change the coordinate system to let $\gamma(j)=(0,L)$ belong to  the real axis $\mathbb{R}\times\{0\}$  and identify $ (z_{k(j)},\dots,z_{k(j)+m(j)})\equiv(0,s_1,\dots,s_m)$ with a quasi-uniform partition $0=s_0<s_1<\dots<s_m=L$. Then $S(j)=\p_2(\e(\gamma(j)))\cap C^1[0,L]$ has a known $B_2^2$ spline basis, while we  define $\varphi_\ell\in S(j)$ for $\ell=0,\dots,m$ by 
\[\varphi_\ell(s_\ell)=1, \varphi_\ell'(s_\ell)=0=\varphi_\ell(s_k)\quad\text{for all}\;k=0,\dots,m\;\text{and}\; k\neq \ell.\]

\begin{lemma}[construction of $\varphi_\ell$]\label{lemc}
	The functions $\varphi_0,\dots,\varphi_m$ belong to $S(j)$ and there exists a positive constant $C_{10}$ (that exclusively depends on $\rho$) such that 
	\begin{align*}\|\varphi_\ell\|_{L^\infty(\gamma(j))}\leq C_{10}\quad\text{for}\; \ell=0,\dots,m.\end{align*}
\end{lemma}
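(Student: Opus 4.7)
The plan is to establish existence and uniqueness of each $\varphi_\ell$ by a dimension count, and then to derive the uniform $L^\infty$ bound via a recursion for the nodal derivatives combined with the quasi-uniformity of the partition $0=s_0<s_1<\dots<s_m=L$ that follows from (M2).

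First I would observe $\dim S(j)=m+2$, obtained from $3m$ parameters for the $m$ quadratic pieces minus the $2(m-1)$ constraints imposed by $C^1$-continuity at interior nodes. The $m+2$ conditions defining $\varphi_\ell$ then make existence equivalent to injectivity of the evaluation map $\Phi:S(j)\to\mathbb{R}^{m+2}$, $f\mapsto(f(s_0),\dots,f(s_m),f'(s_\ell))$. For injectivity I would use that any $f\in S(j)$ vanishing at every node must take the form $f|_{[s_{k-1},s_k]}(x)=c_k(x-s_{k-1})(s_k-x)$ on each subinterval, with $C^1$ matching at interior nodes forcing $c_{k+1}h_{k+1}=-c_k h_k$, where $h_k:=s_k-s_{k-1}$. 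Since $f'(s_{k-1})=c_k h_k$ and $f'(s_k)=-c_k h_k$, the condition $f'(s_\ell)=0$ forces one of the $c_k$ to vanish, whence all do by the recursion, and $f\equiv 0$.

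For the $L^\infty$ bound, set $\beta_k:=\varphi_\ell'(s_k)$ (well-defined by $C^1$). The elementary identity $(p'(a)+p'(b))/2=(p(b)-p(a))/(b-a)$ for a quadratic $p$ on $[a,b]$, applied on $[s_{k-1},s_k]$, yields
\begin{equation*}
\beta_{k-1}+\beta_k=\frac{2(\delta_{\ell,k}-\delta_{\ell,k-1})}{h_k}\quad\text{for }k=1,\dots,m,
\end{equation*}
anchored by $\beta_\ell=0$. Unwinding this recursion forward and backward from $k=\ell$, at most one nonzero term $\pm 2/h_\ell$ or $\pm 2/h_{\ell+1}$ enters at each step, so that $|\beta_k|\leq 2/h^\ast$ for all $k$ with $h^\ast:=\min_i h_i$. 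Representing the quadratic piece on $[s_{k-1},s_k]$ via $\varphi_\ell(s_{k-1}),\varphi_\ell(s_k),\beta_{k-1}$ and invoking a triangle inequality give the pointwise bound $\|\varphi_\ell\|_{L^\infty([s_{k-1},s_k])}\leq 3+4 h_k/h^\ast$. Mesh regularity (M2) provides $\rho h_P\leq h_k\leq h_P$ along each side, hence $h_k/h^\ast\leq 1/\rho$, and the asserted constant $C_{10}$ depends only on $\rho$.

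The main difficulty is ensuring the recursion stays controlled independently of $m$: this relies on the alternating sign structure and on the quasi-uniformity supplied by (M2). Without the uniform edge-length lower bound $h_k\geq\rho h_P$, the nodal derivatives of $\varphi_\ell$ could amplify across the recursion and destroy the dimension-free $L^\infty$ estimate; everything else is elementary one-dimensional $C^1$-spline analysis.
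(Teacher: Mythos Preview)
Your proof is correct and follows essentially the same approach as the paper: both construct $\varphi_\ell$ by propagating the quadratic pieces outward from the interval(s) adjacent to $s_\ell$ via a recursion (the paper does this constructively interval by interval, you encode it in the relation $\beta_{k-1}+\beta_k=2(\delta_{\ell,k}-\delta_{\ell,k-1})/h_k$), and both control the $L^\infty$ norm by the ratio $\max_k h_k/\min_k h_k$, which is bounded via (M2). The paper reads off the sharper bound $\|\varphi_\ell\|_{L^\infty(\gamma(j))}\leq\max\{1,\max_{p,q}h_p/(2h_q)\}$ directly from the explicit extrema on each subinterval, whereas your bound $3+4h_k/h^\ast$ is slightly looser but obtained more systematically; either way the constant depends only on $\rho$.
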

\begin{proof} The construction of $\varphi_\ell$ first determines the piecewise quadratic $\varphi_\ell$ uniquely  in the intervals $(s_{\ell-1},s_\ell)$ and $(s_\ell,s_{\ell+1})$. This determines  the derivative  $\varphi'_\ell$  at the end points $s_{\ell-1}$ and $s_{\ell+1}$. The values $\varphi'_\ell(s_{\ell-1})$ and $\varphi_\ell(s_{\ell-2})=0=\varphi_\ell(s_{\ell-1})$ lead to a unique quadratic polynomial  $\varphi_\ell|_{(s_{\ell-2},s_{\ell-1})}$. A successive application of this argument to the remaining intervals leads to a   $\varphi_\ell\in S(j)$. Let $h_\ell:=s_\ell-s_{\ell-1}$ for $\ell=1,\dots,m$. A direct computation of $\varphi_0,\dots,\varphi_m$ displayed in Figure~C.1 and C.2  controls the extrema $\|\varphi_\ell\|_{L^\infty(\gamma(j))}\leq\max\{1,\max_{p,q=0,\dots,m}h_p/(2h_q)\}$. It follows from (M2) that $h_\ell\approx L$ and so $\|\varphi_\ell\|_{L^\infty(\gamma(j))}\leq C_{10}\quad\text{for}\; \ell=0,\dots,m$ with a positive constant $C_{10}$ that exclusively depends on $\rho$.
\end{proof} 
\begin{figure}[H]
	\centering
	\includegraphics[width=0.65\linewidth]{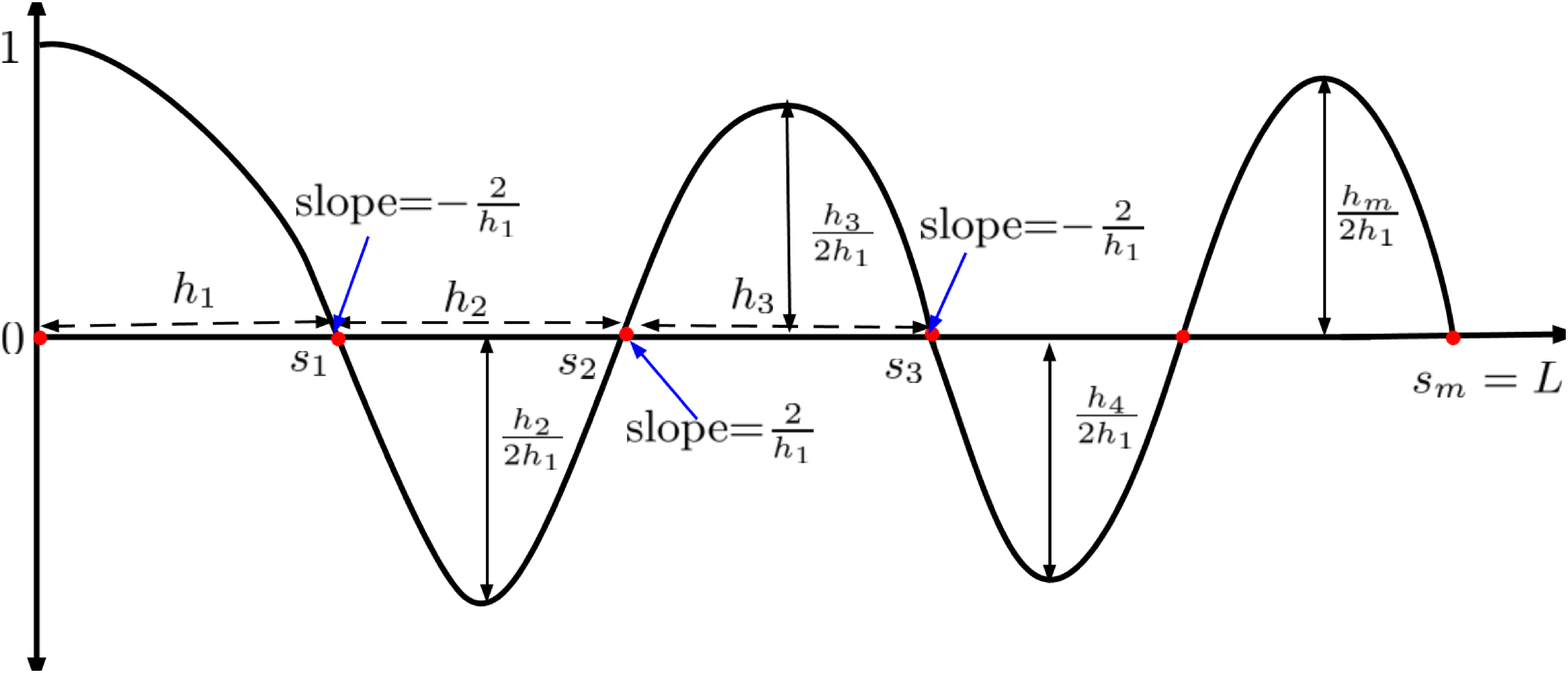}
	\caption{The nodal basis  function $\varphi_0$.}
\end{figure}%
\begin{figure}[H]
	\centering
	\includegraphics[width=0.65\linewidth]{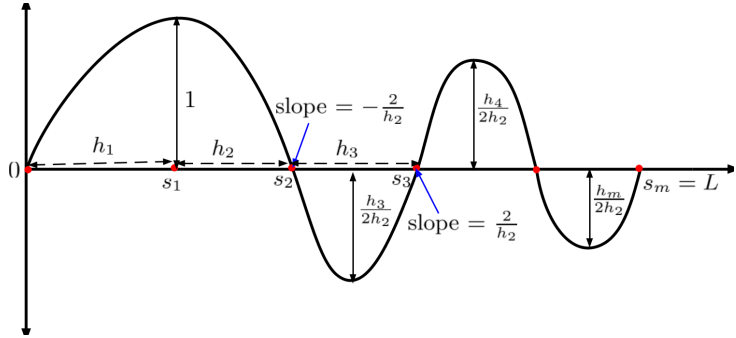}
	\caption{The nodal basis  function $\varphi_1$.}
\end{figure}%
\noindent Define $\widetilde{\psi}$ with $\widetilde{\psi}(0)=0=\widetilde{\psi}(s_1)$ and $\widetilde{\psi}(h_1/2)=1$ in the first interval $(0,s_1)$. Then compute the slope   $-4/h_1$ at $s_1$ and define $\widetilde{\psi}$ uniquely in $(s_1,s_2)$ with $\widetilde{\psi}'(s_1)=-4/h_1$ and $\widetilde{\psi}(s_1)=0=\widetilde{\psi}(s_2)$. Continue the procedure by preassigning the derivative values $(-1)^{\ell-1}4/h_1$ at the left vertex $s(\ell-1)$ and function values $\widetilde{\psi}(s_{\ell-1})=0=\widetilde{\psi}(s_\ell)$ for all $\ell=2,\dots,m$ so that $\widetilde{\psi}\in S(j)$, and verify that extrema in each interval $(s_{\ell-1},s_\ell)$ is $(-1)^{\ell-1}h_\ell/h_1$. Since $\|\widetilde{\psi}\|_{L^{\infty}(\gamma(j))}\geq 1$, rescale $\psi:=\frac{\widetilde{\psi}}{\|\widetilde{\psi}\|_{L^\infty(\gamma(j))}}$ to derive $\|\psi\|_{L^\infty(\gamma(j))}=1$.
\begin{figure}[H]
	\centering
	\includegraphics[width=0.65\linewidth]{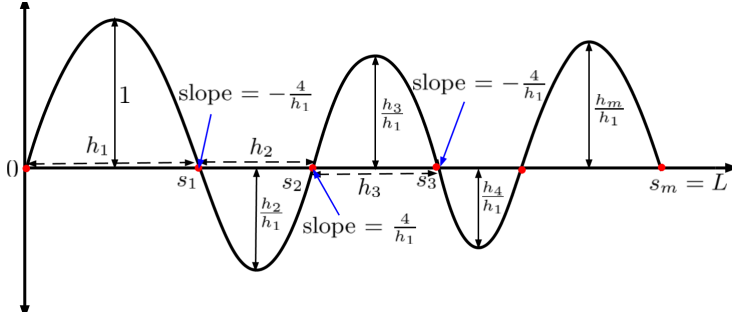}
	\caption{The nodal basis function $\psi$ with $\|\psi\|_{L^\infty(\gamma(j))}= 1$ (here $h_1=\max_{\ell=1}^m h_\ell$).}
\end{figure}

\begin{lemma}[basis of $S(j)$]\label{C1} The functions $\varphi_0,\dots,\varphi_m,\psi$ form a basis of $S(j)$.
\end{lemma}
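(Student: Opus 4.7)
The plan is to show linear independence of the $m+2$ functions $\varphi_0,\dots,\varphi_m,\psi$ and then match this with the dimension of $S(j)$.

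First I would compute $\dim S(j)$. On each of the $m$ subintervals $(s_{\ell-1},s_\ell)$ a quadratic has $3$ degrees of freedom, giving $3m$ total parameters, and $C^1$-matching at each of the $m-1$ interior breakpoints imposes $2(m-1)$ independent linear conditions. Hence $\dim S(j)=3m-2(m-1)=m+2$, which is precisely the number of candidate basis functions.

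Next I would verify linear independence. Suppose $\sum_{\ell=0}^m a_\ell\varphi_\ell+b\,\psi=0$ on $\gamma(j)$. Evaluating at a vertex $s_k$ and using the defining conditions $\varphi_\ell(s_k)=\delta_{k\ell}$ together with $\psi(s_k)=0$ (which holds by the construction of $\widetilde\psi$ and hence $\psi$ above Lemma~\ref{C1}) immediately yields $a_k=0$ for every $k=0,\dots,m$. The remaining relation reduces to $b\,\psi\equiv 0$, and since $\|\psi\|_{L^\infty(\gamma(j))}=1\neq 0$ by construction, we conclude $b=0$.

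The only point that requires a small verification is that each $\varphi_\ell$ and $\psi$ really lies in $S(j)$, i.e.\ is globally $C^1$; but this is built into their construction (the derivative at each interior breakpoint was prescribed to match from both sides), so membership in $S(j)$ is automatic. The combination of linear independence with the dimension count $\dim S(j)=m+2$ then delivers the basis property. I do not anticipate a genuine obstacle here; the statement is essentially a dimension-matching argument once the nodal evaluation trick is used to decouple the $\varphi_\ell$-coefficients from the $\psi$-coefficient.
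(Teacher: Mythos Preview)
Your proof is correct and follows essentially the same approach as the paper: evaluate a vanishing linear combination at the nodes $s_k$ to kill the $\varphi_\ell$-coefficients via $\varphi_\ell(s_k)=\delta_{k\ell}$ and $\psi(s_k)=0$, then use that $\psi\not\equiv 0$ (the paper invokes $\psi(h_1/2)>0$, you use $\|\psi\|_{L^\infty}=1$) to conclude $b=0$, and finish with the dimension count $\dim S(j)=m+2$. The only cosmetic difference is that you justify the dimension formula explicitly, whereas the paper simply asserts it.
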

\begin{proof}
	Let $\alpha,\alpha_0,\dots,\alpha_m\in\mathbb{R}$ satisfy $\sum_{\ell=0}^m\alpha_\ell\varphi_\ell+\alpha\psi=0$. This implies, for any $k=0,\dots,m$, that \[0=\sum_{\ell=0}^m\alpha_\ell\varphi_\ell(s_k)+\alpha\psi(s_k)=\sum_{\ell=0}^m\alpha_\ell\delta_{k\ell }=\alpha_k.\]
	The function $\psi$ attains a positive value at the midpoint $h_1/2$. So $\alpha\psi(h_1/2)=0$ shows that $\alpha=0$. Consequently, $\varphi_0,\dots,\varphi_m,\psi$ are linearly independent. Since $\text{dim}(S(j))=m+2$, they form a basis of $S(j)$.
\end{proof}
\noindent Given the basis functions $\varphi_0,\dots,\varphi_m,\psi$ as in Lemma~\ref{C1}, define 
\begin{align*}
\psi_\ell:=\varphi_\ell-\Lambda_j(\varphi_\ell)\psi\quad\text{for all}\;\ell=0,\dots,m\quad\text{and}\quad \psi_{m+1}:=\psi.
\end{align*}

\begin{lemma}[finite element and 1D stability]\label{1d} The triple $(\gamma(j), S(j), (\text{dof}_{k(j)},\dots,\text{dof}_{k(j)+m(j)},$\\$\Lambda_j))$ forms a finite element in the sense of Ciarlet. The  functions $\psi_0,\dots,\psi_{m+1}$ form a nodal basis of $S(j)$  and
	\begin{align*}1\leq  \|\psi_\ell\|_{L^{\infty}(\gamma(j))}\leq C_{11}\quad\text{for all}\;\ell=0,\dots,m+1\end{align*}
	holds with a positive constant $C_{11}$ (that exclusively depends on $\rho$).
\end{lemma}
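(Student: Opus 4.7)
The plan is to verify the nodal basis duality by direct evaluation of $\mathrm{dof}_{k(j)+k}(\psi_\ell)$ and $\Lambda_j(\psi_\ell)$ on the explicit candidates $\psi_0,\dots,\psi_{m+1}$, then to deduce the finite element property from a dimension count, and finally to bound $\|\psi_\ell\|_{L^{\infty}(\gamma(j))}$ by combining Lemma~\ref{lemc}, the bound $\|\Lambda_j\|\leq C_\Lambda$, and the normalization $\|\psi\|_{L^\infty(\gamma(j))}=1$.

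First I would check the duality relations. For $\ell,k\in\{0,\dots,m\}$, the defining properties $\varphi_\ell(s_k)=\delta_{k\ell}$ and $\psi(s_k)=0$ give $\psi_\ell(s_k)=\varphi_\ell(s_k)-\Lambda_j(\varphi_\ell)\psi(s_k)=\delta_{k\ell}$, while $\Lambda_j(\psi)=1$ (which is the normalization of $\psi=\psi_j$ from Subsection~3.2.2) yields $\Lambda_j(\psi_\ell)=\Lambda_j(\varphi_\ell)-\Lambda_j(\varphi_\ell)\,\Lambda_j(\psi)=0$. For $\ell=m+1$, the choice $\psi_{m+1}=\psi$ vanishes at every $s_k$ and satisfies $\Lambda_j(\psi_{m+1})=1$. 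Hence the evaluation map $(\mathrm{dof}_{k(j)},\dots,\mathrm{dof}_{k(j)+m},\Lambda_j):S(j)\to\mathbb{R}^{m+2}$ is surjective on the system $\psi_0,\dots,\psi_{m+1}$ and, since $\dim S(j)=m+2=\#\{\text{dofs}\}$, this is already enough to show that the triple is a finite element in the sense of Ciarlet and that $\psi_0,\dots,\psi_{m+1}$ is the associated nodal basis.

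For the $L^\infty$ estimate, the triangle inequality gives
\[
\|\psi_\ell\|_{L^\infty(\gamma(j))}\leq \|\varphi_\ell\|_{L^\infty(\gamma(j))}+|\Lambda_j(\varphi_\ell)|\,\|\psi\|_{L^\infty(\gamma(j))}\leq C_{10}+C_\Lambda\,\|\varphi_\ell\|_{L^\infty(\gamma(j))}\leq C_{10}(1+C_\Lambda)
\]
for $\ell=0,\dots,m$ by Lemma~\ref{lemc} and the operator-norm bound $\|\Lambda_j\|\leq C_\Lambda$ (with $S(j)$ endowed with the maximum norm), while $\|\psi_{m+1}\|_{L^\infty(\gamma(j))}=\|\psi\|_{L^\infty(\gamma(j))}=1$ by construction. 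The lower bound is immediate since $\psi_\ell(s_\ell)=1$ for $\ell=0,\dots,m$ and $\|\psi_{m+1}\|_{L^\infty(\gamma(j))}=1$. Setting $C_{11}:=C_{10}(1+C_\Lambda)$ (which depends only on $\rho$ by Lemma~\ref{lemc} and the hypothesis on $C_\Lambda$) concludes the proof.

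The main obstacle is essentially bookkeeping: one needs to be careful that the symbol $\psi$ used in this appendix coincides (up to the chosen coordinate system on $\gamma(j)$) with the globally fixed function $\psi_j\in S(j)$ from Subsection~3.2.2 so that $\Lambda_j(\psi)=1$ holds by the very normalization of $\Lambda_j$. Once this identification is made explicit, each remaining step is a single-line verification.
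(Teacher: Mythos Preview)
Your proof is correct and follows essentially the same approach as the paper: verify the duality relations $\psi_\ell(s_k)=\delta_{k\ell}$ and $\Lambda_j(\psi_\ell)=0$ for $\ell\leq m$ together with $\psi_{m+1}(s_k)=0$ and $\Lambda_j(\psi_{m+1})=1$, invoke the dimension count $\dim S(j)=m+2$, and bound $\|\psi_\ell\|_{L^\infty(\gamma(j))}$ via $\|\varphi_\ell\|_{L^\infty(\gamma(j))}\leq C_{10}$, $\|\Lambda_j\|\leq C_\Lambda$, and $\|\psi\|_{L^\infty(\gamma(j))}=1$ to arrive at $C_{11}=(1+C_\Lambda)C_{10}$. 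Your remark on identifying the locally constructed $\psi$ with the globally fixed $\psi_j$ (so that $\Lambda_j(\psi)=1$) is a useful clarification that the paper leaves implicit.
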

\begin{proof} 
	The functions $\psi_0,\dots,\psi_{m+1}$ satisfy, for all $k,\ell=0,\dots,m$, the duality relations \begin{align*}&\text{dof}_k(\psi_\ell)=\psi_\ell(s_k)=\varphi_\ell(s_k)=\delta_{k\ell},\quad \text{dof}_k(\psi_{m+1})=\psi_{m+1}(s_k)=\psi(s_k)=0,\\   &\Lambda_j(\psi_\ell)=\Lambda_j(\phi_\ell)-\Lambda_j(\phi_\ell)\Lambda_j(\psi)=0,\quad \Lambda_j(\psi_{m+1})=\Lambda_j(\psi)=1.
	\end{align*}  
	Hence the  functions $\psi_0,\dots,\psi_{m+1}$ form a nodal basis of $S(j)$. Consequently the triple $(\gamma(j), S(j),$\\$ (\text{dof}_{k(j)},\dots,\text{dof}_{k(j)+m(j)},\Lambda_j)$ forms a finite element in the sense of Ciarlet.	Recall $\|\psi\|_{L^\infty(\gamma(j))}=1$ and notice $\|\psi_{m+1}\|_{L^\infty(\gamma(j))}=\|\psi\|_{L^\infty(\gamma(j))}= 1$. The definitions of $\varphi_\ell$ show, for $\ell=0,\dots,m$, that \[1\leq\|\psi_\ell\|_{L^{\infty}(\gamma(j))}\leq \|\varphi_\ell\|_{L^{\infty}(\gamma(j))}+|\Lambda_j(\varphi_\ell)|\leq (1+C_{\Lambda})\|\varphi_\ell\|_{L^{\infty}(\gamma(j))}\leq (1+C_{\Lambda})C_{10}\]
	with the assumption $\|\Lambda_j\|\leq C_{\Lambda}$ from Subsection~3.2.2  in the second last step and $\|\varphi_\ell\|_{L^{\infty}(\gamma(j))}\leq C_{10}$ from Lemma~\ref{lemc} in the last step.
\end{proof}

\subsection{Proof of \textbf{(H1)}}
Recall $\text{Dof}(\bullet)=(\text{dof}_1(\bullet),\dots,\text{dof}_{2N_P}(\bullet))$ from Subsection~2.1 for the polygon $P$ with $N_P$ edges   and $J$ sides  and recall $\Lambda_1,\dots,\Lambda_J$ from Subsection~3.2.2.  Let $\Lambda(\bullet):=(\Lambda_1(\bullet),\dots,\Lambda_J(\bullet))$ and abbreviate $\text{Dof}\oplus\Lambda:=(\text{dof}_1,\dots,\text{dof}_{2N_P},\Lambda_1,\dots,\Lambda_J):H^2(P)\to \mathbb{R}^{2N_P+J}$.

\bigskip
\noindent\textit{Step 1 designs an HCT interpolation}.    Given  $w_h\in W_h(P)$, its trace $w_h|_{\gamma(j)}\in S(j)\subset C^1(\gamma(j))$ allows for well-defined tangential derivatives at all the vertices, that are utilized to define the HCT interpolation as follows. Define the normal derivative of $w_\h\in\h(\tT(P))$ to be zero  at any vertex $z$, which is not  a corner. This and the tangential derivative uniquely define $\nabla w_h(z)=\nabla w_\h(z)$. There are two linearly independent tangential derivatives at a corner $\zeta_j$ along $\gamma(j)$ and $\gamma(j+1)$ and they uniquely define a vector $\nabla w_h(\zeta_j)=\nabla w_\h(\zeta_j)$ for $j=1,\dots,J$. Those values and the point evaluations $w_h(z)=w_\h(z)$ at $z\in\mathcal{V}(P)$ allow for the  HCT interpolation $w_\h\in\h(\tT(P))$ of $w_h\in W_h(P)$  with \[w_\h(z_k)=w_h(z_k),\quad \nabla w_\h(z_k)=\nabla w_h(z_k),\quad\text{and}\quad \dashint_{E(\ell)}(w_\h)_\bn\,ds=\dashint_{E(\ell)}(w_h)_\bn\,ds\] for all $k,\ell=1,\dots,N_P$, while  all other dofs of $w_\h$ in the HCT finite element space  vanish. Let $\psi_1^\h,\dots,\psi_{4N_P}^\h$ denote the $4N_P$ nodal basis functions in $\h(\tT(P))$ with $\psi_1^\h,\dots,\psi_{2N_P}^\h$  from Step 1 in Appendix~B. For any $k=2N_P+1,\dots,4N_P$, define some remaining nodal basis functions $\psi_k^\h\in\h(\tT(P))$ uniquely by $\psi_k^\h(z_\ell)=0=\dashint_{E(\ell)}(\psi_k^\h)_{\bn}\,ds$ for $\ell=1,\dots,N_P$, and \begin{align*}
&\partial_x(\psi_k^\h)(z_\ell)=\delta_{(k-2N_P)\ell},\quad \partial_y(\psi_k^\h)(z_\ell)=0 \quad\text{if}\;k=2N_P+1,\dots,3N_P,\\&\partial_x(\psi_k^\h)(z_\ell)=0,\quad \partial_y(\psi_k^\h)(z_\ell)=\delta_{(k-3N_P)\ell} \quad\text{if}\;k=3N_P+1,\dots,4N_P.
\end{align*} The scaling of the standard HCT basis functions from \cite[Prop. 2.5]{gallistl2015morley}, the observation  from Step 1 in Appendix~B for the scaling of $\psi_1^\h,\dots,\psi_{4N_P}^\h$, and the bound $h_T^{-1}\leq \rho^{-1}h_P^{-1}$ for all $T\in\tT(P)$ from (M2)  provide a positive constant $C_\h$ (that exclusively depends on $\rho$) in \begin{align}\max_{k=1}^{N_P}h_P|\psi_k^\h|_{2,P}+\max_{\ell=1}^{3N_P}|\psi_{\ell+N_P}^\h|_{2,P}\leq C_\h.\label{scale1}\end{align} (This extends \eqref{scale} with  a possibly different constant $C_\h$ for  additional $2N_P$ nodal basis functions $\psi_{2N_P+1}^\h,\dots,\psi_{4N_P}^\h$.)   Then,  given $w_h\in W_h(P)$, the HCT interpolation $w_\h$ reads
\begin{align*}w_\h&=\sum_{k=1}^{N_P}\text{dof}_k(w_h)\psi_k^{\h}+\sum_{k=N_P+1}^{2N_P}\text{dof}_k(w_h)h_{E(k-N_P)}^{-1}\psi_k^{\h}+\sum_{k=2N_P+1}^{3N_P}\partial_x(w_h)(z_{k-2N_P})\psi_k^\h\\&\quad+\sum_{k=3N_P+1}^{4N_P}\partial_y(w_h)(z_{k-3N_P})\psi_k^\h.
\end{align*}

\bigskip
\noindent\textit{Step 2 defines the Hilbert space $W_0$}. The kernel $\{v\in H^2(P):\forall \ell=1,\dots,N_P\quad\int_{E(\ell)}v_{\bn}\,ds=0\}$ of the linear map $(\text{dof}_{N_P+1},\dots,\text{dof}_{2N_P}): H^2(P)\cap H^1_0(P)\to \mathbb{R}^{N_P}$ is a  Hilbert space $H^2(P)$ and its intersection \[W_0:=\left\{v\in W: \int_{E(\ell)}v_\bn\,ds=0\quad\text{for}\;\ell=1,\dots,N_P\right\}\subset W:=H^1_0(P)\cap H^2(P)\]    is  complete. Therefore $(W_0,a^P)$ is a Hilbert space.

\bigskip
\noindent\textit{Step 3 proves $w_h-w_\h\in W_0$ for any $w_h\in W_h(P)$ and its HCT interpolation $w_\h\in\h(\tT(P))$ from Step 1}. The design  leads to $(w_h-w_\h)|_{\gamma(j)}\in\p_3(\e(\gamma(j)))$. Step 1 shows that $(w_h-w_\h)(z_k)$ and $\nabla(w_h-w_\h)(z_k)$ vanish for all $k=0,\dots,m$. These values and $(w_h-w_\h)|_{\gamma(j)}\in\p_3(\e(\gamma(j)))\cap C^1(\gamma(j))$  uniquely determine $(w_h-w_\h)|_{\gamma(j)}=0$   for  $j=1,\dots,J$ and so $(w_h-w_\h)|_{\partial P}=0$.  Consequently, $w_h-w_\h\in W$. Since $\int_{E(\ell)}(w_\h)_{\bn}\,ds=\int_{E(\ell)}(w_h)_{\bn}\,ds$ for all $\ell=1,\dots,N_P$ (by design of $w_\h$), $w_h-w_\h\in W_0$ follows from the definition of $W_0$ in Step 2. \qed

\bigskip
\noindent\textit{Step 4 establishes a sufficient criterion for the inclusion in $\widehat{W}_h(P)$}: If $w\in H^2(P)$ and $f\in\p_r(P)$ satisfy $a^P(w,\phi_0)=(f,\phi_0)_{L^2(P)}$ for all $\phi_0\in W_0$, then there exists $g\in\p_0(\e(P))$ such that 
\[a^P(w,\phi)=(f,\phi)_{L^2(P)}+(g,\phi_{\bn})_{L^2(\partial P)}\quad\text{for all}\;\phi\in W.\]

\medskip
\noindent\textit{Proof}: Let $\varphi_\ell\in S^1(\tT(P)):=\{v\in C^0(\tT(P)): \forall T\in\tT(P)\quad v|_T\in\p_1(T)\}$  be the nodal basis functions in the Courant FEM ($\p_1$ conforming) associated with  the vertices $z_\ell$  and define the quadratic edge-bubble function $b_{E(\ell)}:=4\varphi_\ell\varphi_{\ell+1}$ with support $T(E(\ell))$ depicted in Figure~2.1.b. Define $\psi_1,\dots,\psi_{N_P}\in W$ by
\begin{align*}
\psi_{\ell}(x):=\frac{8}{15}(x-\text{mid}(E(\ell)))\cdot\bn_{E(\ell)}b^2_{E(\ell)}(x)\quad\text{for all}\;x\in P
\end{align*}
so that  $\dashint_{E(k)}(\psi_\ell)_\bn\,ds=\delta_{k\ell}$ for all $k,\ell=1,\dots,N_P$. Given any $\phi\in W$, define
\[\phi_0:=\phi-\sum_{\ell=1}^{N_P}\Big(\int_{E(\ell)}\phi_{\bn}\,ds\Big)\psi_\ell\in W_0.\]
Then $a^P(w,\phi)=a^P(w,\phi_0)+a^P(w,\phi-\phi_0)$ and the assumption $a^P(w,\phi_0)=(f,\phi_0)_{L^2(P)}$ imply 
\begin{align*}
a^P(w,\phi)=(f,\phi_0)_{L^2(P)}+\sum_{\ell=1}^{N_P}\Big(\int_{E(\ell)}\phi_{\bn}\,ds\Big)a^P(w,\psi_\ell)=(f,\phi)_{L^2(P)}+(g,\phi_{\bn})_{L^2(\partial P)}
\end{align*}
for $g\in\p_0(\e(P))$ with $g|_{E(\ell)}:=a^P(w,\psi_\ell)-(f,\psi_\ell)_{L^2(P)}$ for $\ell=1,\dots,N_P$ in the last step. This concludes the proof of the claim.\qed

\bigskip
\noindent\textit{Step 5 proves that $\text{Dof}\, \oplus\,\Lambda:\widehat{W}_h(P)\to \mathbb{R}^{2N_P+J}$ is surjective.} Given any $x\in\mathbb{R}^{2N_P+J}$, Lemma~\ref{1d} leads to some $w_j\in S(j)$ such that $\text{dof}_{k(j)+\ell}(w_j)=x_{k(j)+\ell}$ for $\ell=0,\dots,m(j)$ and $\Lambda_j(w_j)=x_{2N_P+j}$. This holds for all $j=1,\dots,J$ and   defines a continuous $w\in \p^2(\e(P))\cap C^0(\partial P)$ with $w|_{\gamma(j)}=w_j\in S(j)$ on the boundary $\partial P$. Since $w\in C^0(\partial P)$ satisfies $w|_{\gamma(j)}\in C^1(\gamma(j))$ for all $j=1,\dots,J$, the tangential derivatives of $w$ define $\nabla w(\zeta_j)$ at each corner point $\zeta_j$, while at each other vertex $z\in\mathcal{V}(P)$ (that is not a corner) the tangential derivative $w'(z)=w_{\bt}(z)$ and the vanishing normal derivative determine a unique vector $\nabla w(z)\in\mathbb{R}^2$. Given those values of $w$ and $\nabla w$ at all vertices in $\mathcal{V}(P)$, define an HCT interpolation $u_\h\in\h(\tT(P))$ as in Step 1  with
\[u_\h(z_k)=w(z_k),\quad \nabla u_\h(z_k)=\nabla w(z_k),\quad\text{and}\quad \dashint_{E(\ell)}(u_\h)_\bn\,ds=x_{\ell+N_P}\]
for $k,\ell=1,\dots,N_P$, while all other dofs vanish. This and Step 3 reveal that $u_\h|_{\partial P}=w|_{\partial P}$. Let $u_0\in W_0$ denote the Riesz representation of the linear functional $a^P(u_\h,\cdot)$ in the Hilbert space $(W_0,a^P)$, i.e., $a^P(u_0,\cdot)=a^P(u_\h,\cdot)$ in $W_0$. Let $\widehat{u}_h:=u_\h-u_0$ and deduce $\widehat{u}_h|_{\partial P}=u_\h|_{\partial P}\in S^2(\e(P))$ from $u_0|_{\partial P}=0$. Since $a^P(\widehat{u}_h,\varphi_0)=0$  for all $\varphi_0\in\mathcal{D}(\text{int}(P))$, it follows $\Delta^2\widehat{u}_h=0$ in $P$.  Step 4 with $f=0\in\p_{-1}(P)$ implies the existence of $g\in\p_0(\e(P))$ with
\[a^P(\widehat{u}_h,\phi)=(g,\phi_{\bn})_{L^2(\partial P)}\quad\text{for all}\;\phi\in W\]
and proves that $\widehat{u}_h\in \widehat{W}_h(P)$. Recall $(\text{Dof}\,\oplus\,\Lambda)(\widehat{u}_h)=x$ from the design of $w$ and $w_\h$ in the very beginning of the proof. \qed

\bigskip
\noindent\textit{Step 6 establishes an inclusion in $\widehat{W}_h(P)$ with a non-zero $f$}. Given $f\in \p_r(P)$, the Riesz-representation theorem guarantees the unique existence of the weak solution $u(f)\in W_0$ to
\begin{align}a^P(u(f),v)=(f,v)_{L^2(P)}\quad\text{for all}\;v\in W_0.\label{A6}\end{align} 
Consequently  $\Delta^2u(f)=f$ in $P$.  This and Step 4 show that $u(f)\in\widehat{W}_h(P)$.

\bigskip
\noindent\textit{Step 7 proves that $\mathcal{L}:\p_r(P)\to\p_r(P), f\mapsto \Pi_ru(f)$  is an isomorphism}. For any $f\in\p_r(P)$ with $\mathcal{L}f=0$, the orthogonality $(1-\Pi_r)u(f)\perp\p_r(P)$ in $L^2(P)$ shows $0=\int_P(\mathcal{L}f)f\,dx=\int_P u(f)f\,dx$. This and $v=u(f)$ in \eqref{A6} result in
$
0=a^P(u(f),u(f))=|u(f)|_{2,P}^2.
$
Consequently $u(f)\in\p_1(P)\cap W_0$ and so $u(f)=0$. Thus  $f=\Delta^2u(f)=0$ and $\mathcal{L}$ is injective; whence  bijective.\qed

\bigskip
\noindent\textit{Step 8 proves that $\mathrm{Dof}:W_h(P)\to \mathbb{R}^{2N_P}$ is an isomorphism}.

\medskip
\noindent\textit{Proof of surjectivity}:  
Given   $x\in\mathbb{R}^{2N_P},$ there exists some $v\in H^2(P)$ with $\text{Dof}(v)=x$ (for a proof we may utilize Step 5 for $(x,0,\dots,0)\in\mathbb{R}^{2N_P+J}$ and obtain at least one $v\in \widehat{W}_h(P)\subset H^2(P)$). Given $v\in H^2(P)$, let $\chi:=Gv\in\p_2(P)$ and notice $\chi$ is computable with Lemma~2.6 from $x$ in a unique way.  Set $y_j:=\Lambda_j(\chi|_{\gamma(j)})\in\mathbb{R}$ for all $j=1,\dots,J$. Step 5 proves that given  $y=(x_1,\dots,x_{2N_P},y_1,\dots,y_J)\in\mathbb{R}^{2N_P+J}$,  there exists some $\widehat{u}_h\in\widehat{W}_h(P)$ with $(\text{Dof}\,\oplus\,\Lambda)(\widehat{u}_h)=y$, i.e.,   $\text{Dof}(\widehat{u}_h)=x$ and $\Lambda_j(\widehat{u}_h|_{\gamma(j)})= y_j$ for $j=1,\dots,J$. Since $G\widehat{u}_h=Gv=\chi$ for any $v\in H^2(P)$ with $\text{Dof}(v)=x$ from Lemma~2.6,  $\Lambda_j(\widehat{u}_h|_{\gamma(j)})= y_j=\Lambda_j(\chi|_{\gamma(j)})=\Lambda_j(G\widehat{u}_h|_{\gamma(j)})$.  This leads to $g:=\Pi_rG\widehat{u}_h-\Pi_r\widehat{u}_h\in\p_r(P)$. Since $\mathcal{L}$ is bijective in $\p_r(P)$ (from Step 7), there exists $f\in\p_r(P)$ with $\Pi_ru(f)=g$. Recall $u(f)\in \widehat{W}_h(P)\cap W_0$ from Step 6 and $\text{Dof}(u(f))=0, G(u(f))=0,$ and $\Lambda(u(f))=0$. Altogether, $u_h^P:=u(f)+\widehat{u}_h\in\widehat{W}_h(P)$ satisfies $\text{Dof}(u_h^P)=x$, $\Lambda(u_h^P-Gu_h^P)=0$, and $\Pi_rGu_h^P=\Pi_rG\widehat{u}_h=g+\Pi_r\widehat{u}_h=\Pi_ru_h^P$. This concludes the proof of $u_h^P\in W_h(P)$ with $\text{Dof}(u_h^P)=x$.\qed

\medskip
\noindent\textit{Proof of injectivity}. Suppose $w_h\in W_h(P)$ satisfies $\text{Dof}(w_h)=0$. This and Lemma~2.6 imply  $Gw_h=0$ and hence the condition $\Lambda_j((w_h-Gw_h)|_{\gamma(j)})=0$ in (3.4) shows $\Lambda_j(w_h)=0$. Lemma~\ref{1d} implies that  $\text{Dof}(w_h)$ and $\Lambda_j(w_h)$ uniquely determine  $w_h|_{\gamma(j)}=0$ for all $j=1,\dots,J$; whence $w_h|_{\partial P}=0$. The substitution of $w=\phi=w_h$ in (3.3), the $L^2$ orthogonality of $\Pi_r$, $f\in\p_r(P) $, and $\Pi_rw_h=\Pi_r(Gw_h)=0$ from (3.4) result in
\begin{align*}
|w_h|^2_{2,P}=a^P(w_h,w_h)=(f,w_h)_{L^2(P)}=(f,\Pi_r(Gw_h))_{L^2(P)}=0.
\end{align*}
Consequently $w_h\in\p_1(P)\cap W_0$, whence $w_h=0$.\qed

\bigskip
\noindent\textit{Proof of \textbf{(H1)}}. The key observation from Step 8 is that $W_h(P)$ has the dimension $2N_P$ and $\text{dof}_1,\dots,\text{dof}_{2N_P}$ from (2.1) are linear independent. Consequently $(P,W_h(P),(\text{dof}_1,\dots,\text{dof}_{2N_P}))$ is a finite element in the sense of Ciarlet.\qed


\subsection{Proof of \textbf{(H2)}}
\noindent\textit{Step 1 defines an HCT interpolation of a nodal basis function of $W_h(P)$}.  Let $\psi_h\equiv\psi_p$ be a nodal basis function    of the finite element $(P,W_h(P),\text{Dof})$ for some $p\in\{1,\dots,2N_P\}$ and let $\psi_\h$ be its   HCT interpolation  as in Step 1 from Subsection~C.2, namely
\begin{align*}\psi_\h&=\sum_{k=1}^{N_P}\text{dof}_k(\psi_h)\psi_k^{\h}+\sum_{k=N_P+1}^{2N_P}\text{dof}_k(\psi_h)h_{E(k-N_P)}^{-1}\psi_k^{\h}+\sum_{k=2N_P+1}^{3N_P}\partial_x(\psi_h)(z_{k-2N_P})\psi_k^\h\\&\quad+\sum_{k=3N_P+1}^{4N_P}\partial_y(\psi_h)(z_{k-3N_P})\psi_k^\h.
\end{align*}

\bigskip
\noindent\textit{Step 2 proves that $|\nabla(\psi_h)(z_{k})|\lesssim h_P^{-1}$ for $k=1,\dots,N_P$}. Recall from Step 1 in Subsection C.2 that the two linearly independent tangential derivatives uniquely define $\nabla\psi_h(\zeta)$ at each corner $\zeta$, and the tangential derivative and vanishing normal derivative uniquely define $\nabla\psi_h(z)$ at each vertex $z$, which is not a corner.
Expand $\psi_h|_{\gamma(j)}$ in terms of the finite element  $(\gamma(j),S(j),(\text{dof}_{k(j)},\dots,\text{dof}_{k(j)+m(j)},\Lambda_j))$ from Lemma~\ref{1d}, write $(\cdot)':=(\cdot)_{\bt}$ for the derivative along $\gamma(j)$, and deduce that
\begin{align}(\psi_h)_{\bt}|_{\gamma(j)}=\sum_{\ell=0}^{m}\text{dof}_\ell(\psi_h) \psi_\ell'+\Lambda_j(\psi_h|_{\gamma(j)})\psi_{m+1}'.\label{B3}\end{align}
The definition of $\psi_h$ shows that $|\text{dof}_\ell(\psi_h)|\leq 1$  for $\ell=0,\dots,m$. The definition of $W_h(P)$ from (3.4) and of $\|\Lambda_j\|$ from Subsection~3.2.2 imply \begin{align*}|\Lambda_j(\psi_h|_{\gamma(j)})|=|\Lambda_j(G\psi_h|_{\gamma(j)})|\leq C_\Lambda\|G\psi_h\|_{L^{\infty}(\gamma(j))}\leq C_\Lambda C_{\text{inv}}L^{-1/2}\|G\psi_h\|_{L^2(\gamma(j))}\end{align*} with an inverse estimate for $G\psi_h|_{\gamma(j)}\in\p_2(\gamma(j))$ in the last step. This and  the trace inequality \cite[p.~554]{14} show $|\Lambda_j(\psi_h|_{\gamma(j)})|\leq C_\Lambda C_{\text{inv}}C_T(L^{-1}\|G\psi_h\|_{L^2(P)}+|G\psi_h|_{1,P})$. Consequently, $L^{-1}\leq \rho^{-1}h_P^{-1}$ from (M2),  Lemma~2.6,  and $|\text{Dof}(\psi_h)|_{\ell^2}=1$ result in
\begin{align}
|\Lambda_j(\psi_h|_{\gamma(j)})|\leq C_\Lambda C_{\text{inv}}C_TC_g(1+\rho^{-1})|\text{Dof}(\psi_h)|_{\ell^2}\leq C_\Lambda C_{\text{inv}}C_TC_g(1+\rho^{-1}).\label{B4}
\end{align}
The inverse inequality for the piecewise quadratic polynomial $\psi_\ell\in\p_2(\e(\gamma(j)))$ with $h_E^{-1}\leq \rho^{-1}h_P^{-1}$ and Lemma~\ref{1d} lead, for $\ell=0,\dots,m+1$, to 
\begin{align*}
|\psi_\ell|_{1,\infty,E}\leq C_{\text{inv}}\rho^{-1}h_{P}^{-1}\|\psi_\ell\|_{L^\infty(E)}\leq C_{\text{inv}}\rho^{-1}C_{11}h_{P}^{-1} \quad\text{for all}\;E\in\e(\gamma(j)).
\end{align*}
The above estimate for each $\ell=0,\dots,m$, the combination \eqref{B3}-\eqref{B4}, and $1+m\leq N_P\leq M(\rho)$ from Subsection~2.1 prove  for a positive constant  $C_{12}:=C_{\text{inv}}C_{11}\rho^{-1}M(\rho)(1+C_\Lambda C_{\text{inv}}C_TC_g)$ (that exclusively depends on $\rho$) that
\begin{align}|(\psi_h)_{\bt}|_{\gamma(j)}(s_k)|\leq|\psi_h|_{1,\infty,\gamma(j)}\leq C_{12} h_P^{-1}\quad\text{for}\; k=0,\dots,m.\label{C5}\end{align} Since the normal derivative at a vertex $s_k$ ($= z_{k(j)+k}$), which is not a corner is zero for $k=1,\dots,m-1$, \eqref{C5} shows that $|\nabla(\psi_h)(s_k)|\leq C_{12} h_P^{-1}$ for $k=1,\dots,m-1$. The expansion in \eqref{B3} for $(\psi_h)_{\bt}|_{\gamma(j-1)}$ leads to $|(\psi_h)_{\bt}|_{\gamma(j-1)}(\zeta_j)|\leq C_{12} h_P^{-1}$. This and \eqref{C5} prove that $|\nabla(\psi_h)(\zeta_j)|\leq C(\omega_j)h_P^{-1}$ with a positive constant $C(\omega_j)$ that depends on $C_{12}$ and on the interior angle $\omega_j\neq\pi$ at the corner $\zeta_j$.  This holds for all $j=1,\dots,J$ and concludes the proof.\qed

\bigskip
\noindent\textit{Step 3 provides the scaling of the HCT interpolation}. The definition of $\psi_h$ shows  $\text{dof}_k(\psi_h)=\delta_{kp}$ for $k,p=1,\dots,2N_P$.  The scaling of $\psi_k^{\h}$  from \eqref{scale1} for the first $2N_P$ indices $k=1,\dots,2N_P$  and $h_E^{-1}\leq\rho^{-1}h_P^{-1}$ for $E\in\e(P)$ from (M2) show 
\[\sum_{k=1}^{N_P}|\text{dof}_k(\psi_h)||\psi_k^{\h}|_{2,P}+\sum_{k=N_P+1}^{2N_P}|\text{dof}_k(\psi_h)|h_{E(k-N_P)}^{-1}|\psi_k^{\h}|_{2,P}\leq C_\h(1+\rho^{-1}) h_P^{-1}.\]
The scaling $|\psi_k^\h|_{2,P}\leq C_\h$ from \eqref{scale1}   and from Step 2 for the remaining $2N_P$ indices $k=2N_P+1,\dots,4N_P$ prove with $C_{13}:=C(\omega_1)+\dots+C(\omega_{J})+C_{12}M(\rho)$ that\begin{align*}&\sum_{k=2N_P+1}^{3N_P}|\partial_x(\psi_h)(z_{k-2N_P})||\psi_k^\h|_{2,P}+\sum_{k=3N_P+1}^{4N_P}|\partial_y(\psi_h)(z_{k-3N_P})||\psi_k^\h|_{2,P}\\&\qquad\leq C_\h\sum_{\ell=1}^{N_P}|\nabla\psi_h(z_\ell)|\leq C_\h C_{13}h_P^{-1}.\end{align*} The previous two displayed estimates lead in the representation of $\psi_\h$ from Step 1 to \begin{align}|\psi_\h|_{2,P}\leq C_\h(1+\rho^{-1}+C_{13}) h_P^{-1}.\label{C6}\end{align}

\bigskip
\noindent\textit{Proof of \textbf{(H2)}}.  Step 3 in Subsection~C.2 shows for the  nodal basis function $\psi_h\in W_h(P)$ that $(\psi_h-\psi_\h)|_{\partial P}=0$. Hence the test function $\phi=\psi_h-\psi_\h$  leads in (3.3) to \begin{align}a^P(\psi_h,\psi_h-\psi_\h)=(f,\psi_h-\psi_\h)_{L^2(P)}\label{C7}\end{align} and it remains to control $\|f\|_{L^2(P)}\lesssim h_P^{-2}|\psi_h|_{2,P}$.  The analogous arguments in Step 10-11 from Appendix~B apply to $\psi_h\in W_h(P)$  and  its HCT interpolation $\psi_\h\in\h(\tT(P))$ from Step 1.  This leads here to $(f,\psi_h-\psi_\h)_{L^2(P)}=(f,\Pi_rG\psi_\h-\psi_\h)_{L^2(P)}$. The arguments in Step 10 of Appendix~B provide $\|\Pi_rG\psi_\h-\psi_\h\|_{L^2(P)}\leq C_\PF(1+2C_P)h_P^2|\psi_\h|_{2,P}$. The sole modification in the arguments concerns  the equality $(f,b_Pf)_{L^2(P)}=a^P(\psi_h,f)$, which follows from $\phi=b_Pf\in H^2_0(P)$ in (3.3). The remaining arguments in Step 11 apply here verbatim and lead to $\|f\|_{L^2(P)}\leq C_b^2h_P^{-2}|\psi_h|_{2,P}$.   These estimates,  Cauchy-Schwarz inequalities, and \eqref{C7} result in \[|\psi_h|_{2,P}\leq (1+C_b^2(C_\PF(1+2C_{\text{P}}))) |\psi_\h|_{2,P}.\] The combination with \eqref{C6} shows
$|\psi_h|_{2,P}\leq C_{\text{stab}} h_P^{-1}$ with $C_{\text{stab}}:=C_\h(1+\rho^{-1}+C_{13})(1+C_b^2(C_\PF(1+2C_{\text{P}})))$.
This verifies  \textbf{(H2)} for a nodal basis function $\psi_h$  in $W_h(P)$ from (3.3)-(3.4) and concludes the proof.\qed
\begin{remark}[comparison with \cite{zhao2018morley}]
	The discrete space in \cite{zhao2018morley} reads
	\begin{align}
	V_h^s(P):=\begin{cases}
	\begin{rcases}
	&v_h\in H^2(P): \Delta^2 v_h\in\p_r(P)\;  v_h|_{\partial P}\in\p_2(\e(P))\;\Delta v_h|_{\partial P}\in \p_0(\e(P)),\\&\quad\forall{E\in\e(P)}\quad \int_Ev_h\,ds=\int_EGv_h\,ds,\quad v_h-Gv_h\perp\p_r(P)\;\text{in}\;L^2(P)
	\end{rcases}.
	\end{cases}\label{zhao}
	\end{align}
	Recall the sides $\gamma(j)$ of a polygonal domain $P\in\T$ for $j=1,\dots,J$  from Subsection~3.2.2. First notice that, for $v_h\in V_h^s(P)$ in \eqref{zhao}, $v_h|_{\gamma(j)}$ belongs to $C^1(\gamma(j))$ and so  $V_h^s(P)$ from \eqref{zhao} allows only the polygons without hanging nodes (i.e., all vertices are corner points).   Second, as discussed in Remark~\ref{rem1}-\ref{rem2}, we avoid the strong formulation $\Delta v_h|_{\partial P}\in\p_0(\e(P))$ and solely consider  the weak formulation (3.3)-(3.4). Hanging nodes (i.e., vertices that are not corners on $\partial P$) are important for a more flexible mesh-design to allow obligatory adaptive mesh-refining.
\end{remark}
\begin{remark}[individual parameters]
	The selection of the linear functional $\Lambda_j^P:=\Lambda_j:S(j)\to\mathbb{R}$ resp. of the parameter $r_P=r=-1,0,1,2$ is individually for each polygon $P\in\T$ and may be labelled with an index $P$ to underline this. Given an interior side $\gamma(j)\subset\partial P_+\cap\partial P_-$ shared by two polygons $P_+,P_-\in\T$, $\Lambda_j^{P_+}$ and $\Lambda_j^{P_-}$ resp. $r_{P_+}$ and $r_{P_-}$ could be different in general. A single selection $\Lambda_j^{P_+}=\Lambda_j^{P_-}$ is also possible and could be even more appealing; but it does not imply $C^0$ conformity: The jump $[w]_{\gamma(j)}\in\text{span}\{\psi_j\}$ for $w\in V_\nc$ with $w|_{P_\pm}\in W_h(P)$ cannot be expected to vanish; so the schemes are fully nonconforming.
\end{remark}

\end{document}